\newenvironment{proof}[1][Proof]{\noindent\textbf{#1.} }{\
\rule{0.5em}{0.5em}}
\newtheorem{theorem}{Theorem}
\newtheorem{corollary}[theorem]{Corollary}
\newtheorem{definition}[theorem]{Definition}
\newtheorem{example}[theorem]{Example}
\newtheorem{lemma}[theorem]{Lemma}
\newtheorem{proposition}[theorem]{Proposition}
\newtheorem{question}[theorem]{Question}
\newtheorem{remark}[theorem]{Remark}
\newcommand\A{\mathbb A}
\newcommand\C{\mathbb C}
\newcommand\N{\mathbb N}
\newcommand\R{\mathbb R}
\newcommand\Z{\mathbb Z}
\newcommand\x{\mathbf x}
\newcommand\y{\mathbf y}
\newcommand\CC{{\mathcal C}}
\newcommand\CP{{\mathcal P}}
\newcommand\CL{{\mathcal L}}
\newcommand\Tan{\operatorname{Tan}}
\newcommand\PTan{\operatorname{PTan}}
\def\square{\ \rule{0.5em}{0.5em}}
\title{On the geometric degree of the tangent bundle of a smooth algebraic variety
\footnote{Partially supported by Universidad de Buenos Aires (UBACYT 20020190100116BA) and  CONICET (PIP 2021-2023 GI 11220200101015CO), Argentina.}}
\author{Gabriela Jeronimo{$^{1, 2, 3}$}, Leonardo Lanciano{$^{1,}$}\footnote{Corresponding author (email: llanciano@dm.uba.ar)} \ and Pablo Solern\'o{$^{1,2}$}}
\date{}
\begin{document}

\maketitle

\begin{minipage}{14cm}
\noindent {\small $^1$ Universidad de Buenos Aires, Facultad de Ciencias Exactas y Naturales,  Departamento de Matem\'atica. Buenos Aires, Argentina.}

\noindent {\small $^2$  CONICET -- Universidad de Buenos Aires, Instituto de Investigaciones Matem\'aticas ``Luis A. Santal\'o'' (IMAS). Buenos Aires, Argentina.}

\noindent {\small $^3$ Universidad de Buenos Aires, Ciclo B\'asico Com\'un,  Departamento de Ciencias Exactas. Buenos Aires, Argentina.}
\end{minipage}

\bigskip

\begin{abstract}
We present bounds for the geometric degree of the tangent bundle and the tangential variety
of a smooth affine algebraic variety $V$ in terms of the geometric degree of $V$. 
We first analyze the case of \emph{curves}, showing an explicit relation between these  degrees. In addition, for parametric curves, we obtain upper bounds that are linear in the degree of the given curve. 
In the case of varieties of \emph{arbitrary dimension}, we prove general upper bounds for the degrees of the tangent bundle and the tangential variety of $V$ that are exponential in the dimension or co-dimension of $V$,
and a quadratic upper bound that holds for varieties defined by \emph{generic} polynomials.
Finally, we characterize the smooth irreducible varieties with a tangent bundle of minimal degree.
\end{abstract}

\section{Introduction}

The tangent bundle of a smooth (differential, algebraic, etc.) variety is one of the fundamental objects in geometry. A survey on properties of tangent bundles, including dimension and smoothness, in the general framework of schemes can be found in \cite{Kunz1999}. 
The present paper is concerned with studying further properties of tangent bundles of affine algebraic varieties.

An essential invariant related to the geometric complexity of an algebraic variety is its geometric degree, which in some sense measures how twisted the variety is. We refer the reader to \cite{Heintz1983} for a precise definition and basic properties of the geometric degree of arbitrary affine varieties. This invariant turns out to be a key parameter involved in the complexity of algorithmic procedures for polynomial system solving  (see, for instance, \cite{BaMu93,GIUSTI19971223,Castro2003TheHO}).

In this paper we present some estimations concerning the geometric degree of the tangent bundle of smooth affine algebraic varieties in terms of the degree of the given variety. This kind of questions appear naturally in order to analyze complexity aspects in symbolic approaches to the integrability of systems of differential algebraic equations following the Cartan-Kuranishi principle of prolongation-projection (see \cite{cartan1945systemes, kuranishi1957}), including the elimination of unknowns in differential algebraic systems (see, for instance, \cite{OPV2022}) and the effective differential Nullstellensatz (see, for instance, \cite{DJS2014,GKO2016}). They are also related to the study of the properties of the jet schemes associated to a given variety (see for example \cite{Musta2000JetSO,Sebag2017ARO}). 

For an arbitrary \emph{smooth irreducible algebraic variety} $V$  of dimension $d$ in the affine space $\A^n$ over an algebraically closed field $K$ (i.e. the space $K^n$ equipped with the Zariski topology), the tangent bundle $TV\subseteq \A^{2n}$ is a smooth irreducible algebraic variety of dimension $2d$ (see \cite{Kunz1999} or Proposition \ref{prop:dimTV} below). It consists of all pairs $(p,q)\in \A^{2n}$ 
such that $p\in V$ and $q\in \A^n$ is a vector which is tangent to $V$ at $p$ or the zero vector. A closely related algebraic variety is the \emph{tangential variety} of $V$ (see \cite[Example 15.4]{harris1992algebraic}):
if $\pi_2:\A^{2n}\to \A^n$ is the projection which forgets the first $n$ coordinates, the tangential variety of $V$, denoted by $\Tan(V)$, is the Zariski closure of $\pi_2(TV)\subseteq \A^{n}$, i.e. the closure of the set of all those vectors which are tangent at some point of $V$. Clearly $\Tan(V)$ is irreducible (since $TV$ is) and its dimension is bounded by $\min\{2d,n\}$. However, unlike the case of the tangent bundle, in general it is a hard problem to determine the basic properties of the tangential variety of a given variety $V$, such as its ideal, dimension, etc. For example, in \cite{oeding}, the ideal and the ring of coordinates for the tangential varieties of Segre-Veronese varieties are computed and in \cite{cattaneo}, the dimension and degree of the tangential varieties of projective surfaces with some additional properties about its embedding are determined.
For the particular case of an irreducible smooth curve $\CC\subseteq \A^n$, it can be seen that $\dim(\Tan(\CC))=2$ if $\CC$ is not a line (see \cite[Exercise 8.1.2]{landsberg} or Lemma \ref{lem:tanC} below).

Concerning the geometric degree, the inequality
$\deg(\Tan(V))\le \deg(TV)$ (see Proposition \ref{prop:degTanVledegTV}) 
implies that the computation of (upper bounds for) the degree of $TV$ also provides upper bounds for the degree of $\Tan(V)$. 

We start by considering the case of a smooth irreducible algebraic curve $\CC\subseteq \A^n$. Our first main result is an explicit relation between the geometric degrees of $T\CC$ and $\Tan(\CC)$: 

\bigskip
\noindent \textbf{Theorem A.}
{\emph{Let $\CC \subseteq  \mathbb{A}^n$ be a smooth irreducible algebraic curve. Then,
\[        \deg(T\CC) = \deg(\CC) + \omega(\CC) \deg(\operatorname{Tan}(\CC)),\]
where $\omega(\mathcal{C})$ is the cardinality of the typical fiber of $\pi_2: \A^{2n} \to \A^n$ which forgets the first $n$ coordinates (roughly speaking, $\omega(\CC)$ is the number of points in $\mathcal{C}$ having the same generic tangent direction).}}

\bigskip

As a consequence of this result, we prove in Theorem \ref{thm:linear} that if $V$ is a smooth irreducible variety of \emph{arbitrary} dimension such that $\deg(TV)=\deg(V)$, then $V$ is necessarily a linear variety.

We also study the degree of the tangent bundle for the particular case of parameterized curves. We prove that if a curve $\CC$ has a parametric \emph{polynomial} representation satisfying certain assumptions (see Section \ref{subsection: parametric algebraic curves}), then $\deg(T\CC)=2\deg(\CC)-1$, whereas if it has a parametrization given by \emph{rational} functions, under the same assumptions, the bound $\deg(T\CC)\le 3\deg(\CC)-2$ holds (see Theorems \ref{thm:degTCpolyparam} and \ref{thm:degTCratparam} respectively). Note that in both cases the bound for the geometric degree of the tangent bundle is linear in the geometric degree of the curve. However, this is far from being the typical situation: for an  algebraic variety $V$ defined by \emph{generic} polynomials, the inequality  $\deg(TV)\le (\deg(V))^2$ holds (see Corollary \ref{corolario cuadrado}). 

For an \emph{arbitrary} smooth irreducible variety, we prove an upper bound for $\deg(TV)$ and \emph{a fortiori} for $\deg(\Tan(V))$:

\bigskip
\noindent \textbf{Theorem B.}
{\emph{Let $V\subseteq \A^n$ be a smooth irreducible algebraic variety of dimension $d$. Then,
\[ \deg(TV) \leq \min \left \{(\deg(V))^{n-d+1}, \deg(V)\left((n-d)(\deg(V)-1)+1\right)^{d} \right\}.\]
In particular, the same upper bound holds for $\deg(\Tan(V))$.}}

\bigskip

Nevertheless, we conjecture that an upper bound for $\deg(TV)$ depending quadratically on $\deg(V)$ should be valid (see Question \ref{conj: quadratic}). Observe that our general bound applied in the case of hypersurfaces ($d=n-1$) or curves ($d=1$) and the bound valid in the case of generic varieties give evidence for this presumption.

\bigskip

The paper is organized as follows: in Section \ref{sec:preliminaries}, we introduce the basic notions and notation, and point out the previous results we will use along the paper.
Section \ref{sec:smooth} is devoted to establishing some basic facts (dimension, smoothness, etc.) about the tangent bundle and the tangential variety associated to a smooth algebraic variety. In Section \ref{sec:curves} we consider the tangent bundle of smooth algebraic \emph{curves}:  we prove Theorem A and the bounds for the degree of the tangent bundle of parameterized curves (Theorems \ref{thm:degTCpolyparam} and \ref{thm:degTCratparam}, respectively). Finally, in Section \ref{sec:bounds}, we prove Theorem B and we show that the trivial lower bound $\deg(V)$ for the degree of the tangent bundle is attained only in the case of linear varieties (Theorem \ref{thm:linear}).

\section{Preliminaries} \label{sec:preliminaries}

Let $K$ be an algebraically closed field of characteristic $0$ and $K[\textbf{x}]$ the polynomial ring in $n$ variables $\textbf{x}:=x_1,\ldots,x_n$. We denote $\A^n$ the affine space $K^n$ equipped with the Zariski topology. For an algebraic variety $V\subseteq \A^n$, the ideal of all polynomials in $K[\textbf{x}]$ which vanish over $V$ is denoted by $I(V)$. For any subset $F\subseteq K[\x]$ we write $V(F)$ for the algebraic variety of the common zeroes in $\A^n$ of all the polynomials in $F$.

\subsection{Geometric degree, B\'ezout inequalities and Bernstein-Kushnirenko's theorem} \label{section:bezout}

Here we recall briefly the definition of the geometric degree of an algebraic variety and its first properties  (mainly the B\'ezout Inequality, see Proposition \ref{prop:bezout} below). For the proofs and complete statements see, for instance, \cite{mumford1976algebraic,harris1992algebraic} in the case of projective varieties or \cite{Heintz1983} for affine (and constructible) sets.

Let $V\subseteq\A^n$ be an irreducible algebraic variety of dimension $d$. Then, for almost all linear variety $L$ of dimension $n-d$, the intersection $V\cap L$ is a finite set. The cardinality of this set is called the \emph{geometric degree} of $V$ and denoted by $\deg(V)$. Let us remark that $\deg(V)$ is also the maximal (finite) cardinality obtained by intersections with linear varieties of complementary dimension (see for instance \cite[Lemma 1 and Prop.~1]{Heintz1983}).
It is easy to see that the degree of any linear variety is $1$ and the degree of a hypersurface agrees with the degree of the square-free equation defining it.

If the variety $V$ is not necessarily irreducible, we extend this notion by defining its degree as the sum of the degrees of \emph{all} its irreducible components. Even if this extension lost the geometric flavor of the previous definition in the non-equidimensional case, it is a quite natural generalization which allows to state and prove the following general B\'ezout Inequalities in the affine case (see \cite[Theorem 1]{Heintz1983}, 
and \cite[Proposition 2.3]{HS82}):

\begin{proposition} \label{prop:bezout}
Let $V,V_1,\ldots,V_r\subseteq\A^n$ be algebraic varieties. Then \[\deg(V\cap V_1\cap\cdots \cap V_r)\le \deg(V)\min \Big\{ \prod_{i=1}^r \deg(V_i)\ ,\ \big(\max_{1\le i\le r} \deg(V_i)\big)^{\dim(V)}\Big\}.
\]
In particular, if $f_1,\ldots,f_r\in K[\x]$, we have the inequality
\[\deg(\{x\in \A^n\mid f_1(x)=0,\ldots,f_r(x)=0\})\le \min \Big\{ \prod_{i=1}^r \deg f_i\ ,\ (\max_{1\le i\le r} \deg f_i)^n.\Big\}\ \square\]
\end{proposition}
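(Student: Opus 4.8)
The plan is to reduce everything to one elementary intersection inequality together with two structural properties of the geometric degree. The elementary inequality is the one-hypersurface B\'ezout bound: for any variety $W\subseteq\A^n$ and any $f\in K[\x]$, $\deg(W\cap V(f))\le\deg(W)\deg(f)$. By additivity of the degree over irreducible components I would first reduce to $W$ irreducible; if $f$ vanishes identically on $W$ the bound is trivial (as $\deg f\ge 1$), and otherwise $W\cap V(f)$ has dimension $\dim W-1$, so I would count its points in a generic linear section $L$ of complementary dimension, rewriting $(W\cap V(f))\cap L=(W\cap L)\cap V(f)$. This turns the count into the intersection of the generic linear section $W\cap L$, a curve of degree at most $\deg(W)$, with the hypersurface $V(f)$, which is bounded by $\deg(W)\deg(f)$ through classical B\'ezout for curves.

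Two further facts about the geometric degree then enter, both standard and themselves resting on the linear-section characterization of degree recalled above: it is multiplicative under Cartesian products, $\deg(A\times B)=\deg(A)\deg(B)$, and it never increases under intersection with a linear variety, $\deg(W\cap L)\le\deg(W)$. The product bound is now immediate from the diagonal trick: inside $\A^{n(r+1)}$ the intersection $V\cap V_1\cap\cdots\cap V_r$ is isomorphic to the slice of the product $V\times V_1\times\cdots\times V_r$ by the linear diagonal $\Delta=\{(x,\ldots,x)\}$, whence $\deg(V\cap V_1\cap\cdots\cap V_r)\le\deg(V\times V_1\times\cdots\times V_r)=\deg(V)\prod_i\deg(V_i)$. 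Taking just two factors gives in particular $\deg(Z\cap W)\le\deg(Z)\deg(W)$ for any two varieties.

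The bound $\deg(V)(\max_i\deg V_i)^{\dim V}$ is the delicate one, and the step I expect to be the main obstacle, since it must control the degree with only $\dim V$ factors rather than all $r$ of them. I would analyse the descending chain $V\supseteq V\cap V_1\supseteq V\cap V_1\cap V_2\supseteq\cdots$: at each step an irreducible component of the current intersection either lies inside the next $V_i$, and then survives with unchanged dimension and degree, or meets $V_i$ properly, dropping its dimension by one and multiplying its degree by at most $\deg(V_i)\le\max_i\deg V_i$ (by the two-variety bound). Since a component starts with dimension at most $\dim V$, it can drop at most $\dim V$ times, so each lineage accrues a factor of at most $(\max_i\deg V_i)^{\dim V}$; summing over lineages against the initial degree $\deg V$ yields the asserted bound. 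The genuinely technical point is making the survives-versus-drops dichotomy and the attendant degree bookkeeping rigorous, in particular handling excess or embedded components; this is typically achieved by replacing the $V_i$ with generic linear combinations of defining equations so that each effective cut lowers the dimension by exactly one.

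Finally, the polynomial statement is the special case $V=\A^n$ and $V_i=V(f_i)$: here $\deg(\A^n)=1$, $\deg V(f_i)\le\deg f_i$, and $\dim\A^n=n$, so the two displayed terms specialize to $\prod_i\deg f_i$ and $(\max_i\deg f_i)^n$. All of this is exactly what is established in the cited works of Heintz and Heintz--Schnorr, to which I would ultimately appeal for the complete argument and for the precise treatment of the non-equidimensional case.
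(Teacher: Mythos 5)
The paper offers no proof of Proposition \ref{prop:bezout}: the statement is quoted from the literature, with the product bound attributed to \cite[Theorem 1]{Heintz1983} and the $\big(\max_{1\le i\le r}\deg(V_i)\big)^{\dim(V)}$ bound to \cite[Proposition 2.3]{HS82}. Your sketch faithfully reconstructs the standard arguments from exactly those sources --- the diagonal reduction combined with multiplicativity of the degree under products and its monotonicity under linear sections for the first bound, and the weighted induction along the descending chain of intersections for the second --- and ultimately appeals to the same references, so it is consistent with (and more informative than) the paper's own treatment.
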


For each $\alpha:=(\alpha_1,\ldots,\alpha_n)\in \mathbb{N}_0^n$ we denote by $\x^\alpha$  the monomial $x_1^{\alpha_1}\ldots x_n^{\alpha_n}$. Let $f\in K[\x]$ be a polynomial and write $f=\sum_{\alpha\in A} c_\alpha \x^\alpha$ where $c_\alpha\ne 0$ for all $\alpha\in A$. The set $A\subseteq \mathbb{N}_0^n$ is called the \emph{support of $f$} and its convex hull in $\R^n$ is the  \emph{Newton polytope of $f$}, denoted by $\mathcal{N}(f)$.

Let $P_1,\ldots,P_n$ be polytopes in $\R^n$. The \emph{mixed volume $MV(P_1,\ldots,P_n)$ of $P_1,\ldots,P_n$} is defined as the alternating sum \[ MV(P_1,\ldots,P_n)=\sum_{k=1}^{n}(-1)^{n-k} \sum_{I\subseteq\{1,\ldots,n\},\, |I|=k} \textrm{Vol}_{\R^n}(\sum_{i\in I}P_i),\]
where $\textrm{Vol}_{\R^n}$ is the usual volume in $\R^n$ and $\sum_{i\in I}P_i:=\{\sum_{i\in I}x_i\mid x_i\in P_i \ \forall i\in I\}$ (see e.g 
\cite[Chapter 7, \S4]{CoxLittleOshea2006usingAlgebraicGeometry}). In the particular case when $P_1=\dots=P_n=P$, we have that $MV(P,\dots, P) = n! \textrm{Vol}_{\R^n}(P)$.

Now, let $v\in \mathbb{Q}^n\setminus\{0\} $ be an arbitrary vector and $A \subseteq \mathbb{N}_{0}^n$ a finite set. We denote $m_v(A) := \min\{\alpha \cdot v\ |\ \alpha\in A\}$ and $A_v := \{\alpha \in A\ | \ \alpha \cdot v = m_v(A)\}$ (the dot ``$\cdot$" denotes the usual inner product in $\R^n$). For a polynomial $f\in K[\x]$ with support $A\subseteq \mathbb{N}_{0}^n$ we set  $f_v:=\sum_{\alpha\in A_v} c_\alpha \x^\alpha$.

With this notation we can state the so-called Bernstein-Kushnirenko's theorem (see \cite{bernstein1975number,kushnirenko1976polyedres,CoxLittleOshea2006usingAlgebraicGeometry}):
\begin{proposition}\label{prop:BKK}
Let $f_1,\ldots,f_n\in K[\x]$ and let $\delta$ be the number of their isolated common zeros in $(K\setminus \{0\})^n$ counted according to their multiplicities. Then $\delta \le MV(\mathcal{N}(f_1),\ldots,\mathcal{N}(f_n))$ and, if the supports of $f_1,\ldots,f_n$ are previously fixed, the equality holds generically. Moreover:
\begin{itemize}
\item If for all $v\in \mathbb{Q}^n\setminus\{0\}$ the polynomials $f_{1v},\ldots,f_{nv}$ have no common zeros in \linebreak $(K\setminus \{0\})^n$, then $\delta = MV(\mathcal{N}(f_1),\ldots,\mathcal{N}(f_n))$ and all the common zeros of $f_1,\dots, f_n$ in $(K\setminus \{0\})^n$ are isolated.
\item If for some $v\in \mathbb{Q}^n\setminus\{0\}$ the polynomials $f_{1v},\ldots,f_{nv}$ have a common zero in \linebreak  $(K\setminus \{0\})^n$, then $\delta < MV(\mathcal{N}(f_1),\ldots,\mathcal{N}(f_n))$  if   $MV(\mathcal{N}(f_1),\ldots,\mathcal{N}(f_n))\ne 0$ and $\delta=0$ otherwise.  $\square$
\end{itemize}
\end{proposition}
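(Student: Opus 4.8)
The plan is to prove the Bernstein--Kushnirenko theorem by compactifying the torus $(K\setminus\{0\})^n$ to a smooth projective toric variety and translating the count of common zeros into an intersection number, the behaviour of the facial subsystems $f_{iv}$ being exactly what controls the geometry along the boundary. (One could instead run Bernstein's original homotopy/Puiseux-series deformation, but the toric picture makes the role of the $f_{iv}$ most transparent.) First I would set $P_i:=\mathcal{N}(f_i)$ for $1\le i\le n$ and choose a complete fan $\Sigma$ in $\R^n$ that simultaneously refines the inner normal fans of all the $P_i$; after a regular subdivision I may assume the associated toric variety $X:=X_\Sigma$ is smooth and projective and contains $(K\setminus\{0\})^n$ as its dense orbit. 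Each Laurent polynomial $f_i$ then extends to a global section $s_i$ of the globally generated line bundle $L_i$ on $X$ determined by the polytope $P_i$, and its zero divisor $D_i$ restricts on the dense torus to $V(f_i)\cap(K\setminus\{0\})^n$.

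The second ingredient is the fundamental formula of toric intersection theory, namely
\[
L_1\cdots L_n \;=\; MV(P_1,\ldots,P_n),
\]
which identifies the top mixed intersection number of the $L_i$ with the normalized mixed volume of their Newton polytopes; I would take this as the combinatorial input, proved by reducing to the equal-polytope case $MV(P,\ldots,P)=n!\,\mathrm{Vol}(P)$ (the top self-intersection of an ample toric divisor) and invoking the multilinearity of both sides in $P_1,\ldots,P_n$. Since each $s_i$ is a global section, every isolated common zero lying in the dense torus contributes a positive integer, its intersection multiplicity, to $L_1\cdots L_n$; as all contributions are nonnegative, in every case $\delta\le L_1\cdots L_n=MV(P_1,\ldots,P_n)$, which is the asserted inequality.

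The heart of the argument is the boundary analysis, and this is where the facial subsystems enter. The boundary $X\setminus(K\setminus\{0\})^n$ is the union of the torus orbits $O_\tau$ indexed by the nonzero cones $\tau$ of $\Sigma$; for $v$ in the relative interior of such a cone, the restriction of $f_i$ to $\overline{O_\tau}$ is governed, up to a monomial unit, precisely by the leading form $f_{iv}=\sum_{\alpha\in A_{i,v}}c_\alpha\x^\alpha$ supported on the face $A_{i,v}$ selected by $v$. As $v$ ranges over $\mathbb{Q}^n\setminus\{0\}$ these faces run over all proper faces of the $P_i$, hence over all boundary orbits. Consequently, the $s_i$ have a common zero on $O_\tau$ if and only if the corresponding facial system $f_{1v},\ldots,f_{nv}$ has a common zero in the subtorus $O_\tau\cong(K\setminus\{0\})^{n-\dim\tau}$. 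If no such facial common zero exists for any $v$, the common zero locus of $s_1,\ldots,s_n$ is disjoint from the boundary, hence finite and contained in the dense torus; a transversality argument then makes the intersection proper with every point isolated, so $\delta=L_1\cdots L_n=MV(P_1,\ldots,P_n)$, giving the first bullet. Conversely, any common zero on a boundary orbit absorbs a strictly positive part of the intersection number (or forces $MV=0$ when the relevant bundle degenerates), yielding the strict inequality $\delta<MV$ of the second bullet.

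Finally, the genericity statement follows from the same dichotomy: for fixed supports, a facial system $f_{1v},\ldots,f_{nv}$ consists of $n$ equations on a torus $O_\tau$ of dimension $n-\dim\tau<n$, an overdetermined system which for generic coefficients $c_\alpha$ has no common zero; since only finitely many faces must be controlled, generic coefficients satisfy the hypothesis of the first bullet and $\delta=MV$ holds. The main obstacle I expect is precisely this boundary bookkeeping: proving cleanly that the boundary common zeros, together with any excess from positive-dimensional components, account for the deficit $L_1\cdots L_n-\delta$ with the correct nonnegativity, and matching the restriction of $s_i$ to $\overline{O_\tau}$ with the facial polynomial $f_{iv}$ up to a monomial unit, so that ``common zero on the boundary'' and ``common zero of a facial subsystem'' become genuinely equivalent.
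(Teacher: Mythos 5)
First, a point of comparison: the paper does not prove this proposition at all --- it is quoted as a classical theorem of Bernstein and Kushnirenko, stated with a $\square$ and references to \cite{bernstein1975number,kushnirenko1976polyedres,CoxLittleOshea2006usingAlgebraicGeometry}. So there is no in-paper argument to match your proposal against; what you have written is a sketch of the standard toric-compactification proof found in that literature. Within that framework, your treatment of the inequality $\delta\le MV(\mathcal{N}(f_1),\ldots,\mathcal{N}(f_n))$ and of the first bullet is essentially sound, granting the two standard inputs you name (the identity $L_1\cdots L_n=MV(P_1,\ldots,P_n)$ and the nonnegativity of local contributions for globally generated line bundles, as in Fulton's positivity results). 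One small correction there: no ``transversality argument'' is needed or available in the first bullet --- once the common zero locus avoids the boundary it is a closed subscheme of a complete variety contained in an affine open, hence finite, and then the intersection is automatically proper and the intersection number is the sum of the local multiplicities.

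The genuine gap is in the second bullet. You assert that a common zero on a boundary orbit ``absorbs a strictly positive part of the intersection number,'' but this is precisely the hard point and it does not follow from the nonnegativity you invoke. If the boundary zero is an isolated point of $\bigcap Z(s_i)$ its local multiplicity is indeed at least $1$ and you get strictness; but if it lies on a positive-dimensional component $C$ of the intersection, the contribution of $C$ to $L_1\cdots L_n$ is only known to be nonnegative for globally generated (nef) bundles --- it can vanish, unlike in the ample case --- so the deficit $L_1\cdots L_n-\delta$ could a priori be zero even though the boundary is met. (Already on $\mathbb{P}^1\times\mathbb{P}^1$ one can arrange positive-dimensional components of a section intersection contributing $0$.) Closing this requires a genuinely different mechanism: either Bernstein's original one-parameter degeneration $x\mapsto t^v\cdot x$, which shows that a facial root forces a branch of solutions to escape to infinity and thereby strictly lowers the count in the torus, or a careful excess-intersection argument specific to the toric boundary. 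As written, the second bullet is asserted rather than proved, and your own closing paragraph correctly identifies this as the unresolved obstacle.
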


\subsection{Smooth algebraic varieties}

A variety $V$ is called \emph{smooth at a point $p\in V$} if and only if its local ring $\mathcal{O}_p$ is a regular local ring, i.e.~$\dim_{\operatorname{Krull}} ({\mathcal O}_p)=\dim_K (\mathfrak{m}/\mathfrak{m}^2)$, where $\mathfrak{m}$ is the maximal ideal of $\mathcal{O}_p$. In other words, the local dimension of $V$ at $p$ is the dimension of the cotangent space. See, for instance, \cite{Kunz2013,eisenbud1995commutative} for basic facts about regular rings and smooth varieties. 

A variety $V$ is called \emph{smooth} if it is smooth at every point. Since regular rings are in particular domains, a smooth variety $V$ is locally irreducible and then, all its connected components are smooth too and coincide with the irreducible components of $V$. 

A foundational result concerning smooth varieties is the classical \emph{Jacobian Criterion} (see \cite[Ch.VI, \S 1., Proposition 1.5]{Kunz2013} and \cite[\S.29]{matsumura1980commutative}):

\begin{proposition}\label{prop: jacobian}
Let $V\subseteq\A^n$ be an algebraic variety, $I(V)\subseteq K[\x]$ its ideal and $p\in V$. 
\begin{enumerate}
\item If $f_1,\ldots,f_r\in I(V)$, then the rank of the $r\times n$ Jacobian matrix $\dfrac{\partial (f_1,\ldots,f_r)}{\partial (x_1,\ldots,x_n)}(p )$ is at most $n-\dim_{\operatorname{Krull}}(\mathcal{O}_p)=n-\dim _p(V)$ (where $\dim _p(V)$ denotes the local dimension of the variety $V$ around the point $p$).
\item The following statements are equivalent:
\begin{enumerate}
\item V is smooth at $p$.
 \item If $I(V)=(f_1,\ldots,f_r)$, then 
 $\operatorname{rank} \dfrac{\partial (f_1,\ldots,f_r)}{\partial (x_1,\ldots,x_n)}(p )=n-\dim_p(V)$.
 \item There exist $f_1,\ldots,f_r\in I(V)$ such that $\operatorname{rank} \dfrac{\partial (f_1,\ldots,f_r)}{\partial (x_1,\ldots,x_n)}(p )=n-\dim_p(V)$.
\end{enumerate}
Moreover, if $f_1,\ldots,f_r\in I(V)$ satisfy condition (c) for every $p\in V$, then $I(V)=(f_1,\ldots,f_r)$. In particular a variety $V$ smooth at a point $p$ is locally ideal theoretically complete intersection in a neighborhood of $p$. $\square$
\end{enumerate}
\end{proposition}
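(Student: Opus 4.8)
The plan is to translate every assertion into a single linear-algebra statement about the cotangent space of $V$ at $p$ and then feed it into the standard dimension theory of local rings. Fix $p\in V$, let $M_p\subseteq K[\x]$ be the maximal ideal of the polynomials vanishing at $p$, and let $R:=K[\x]_{M_p}$ be the regular, $n$-dimensional local ring of $\A^n$ at $p$, whose cotangent space $M_pR/M_p^2R$ is an $n$-dimensional $K$-vector space. For a polynomial $f$ with $f(p)=0$ I would look at its class $d_pf\in M_pR/M_p^2R$; expanding $f$ in powers of $x_i-p_i$ shows that $d_pf$ is exactly the linear form with coefficients $\frac{\partial f}{\partial x_1}(p),\ldots,\frac{\partial f}{\partial x_n}(p)$, i.e.\ $d_p$ records the corresponding row of the Jacobian. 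Since $d_p(gf)=g(p)\,d_pf$ for $f\in I(V)$ and $g\in R$, the image of $I(V)$ in $M_pR/M_p^2R$ is the $K$-span of $d_pf_1,\ldots,d_pf_r$ for \emph{any} generating family $I(V)=(f_1,\ldots,f_r)$, and its dimension is the rank of the Jacobian at $p$. As $\mathcal{O}_p=R/I(V)R$ has maximal ideal $\mathfrak{m}=M_pR/I(V)R$, the cotangent (conormal) exact sequence yields the key identity
\[
\dim_K(\mathfrak{m}/\mathfrak{m}^2)=n-\operatorname{rank}\frac{\partial(f_1,\ldots,f_r)}{\partial(x_1,\ldots,x_n)}(p),\qquad\text{provided } I(V)=(f_1,\ldots,f_r).
\]

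From here items (1) and (2) drop out of two general facts: the inequality $\dim_{\operatorname{Krull}}(\mathcal{O}_p)\le\dim_K(\mathfrak{m}/\mathfrak{m}^2)$, valid in every Noetherian local ring, and the definition that smoothness of $V$ at $p$ (regularity of $\mathcal{O}_p$) means precisely equality here. For (1), the Jacobian rows of arbitrary $f_1,\ldots,f_r\in I(V)$ lie in the image of $I(V)$, so their rank is at most $n-\dim_K(\mathfrak{m}/\mathfrak{m}^2)\le n-\dim_{\operatorname{Krull}}(\mathcal{O}_p)=n-\dim_p(V)$. For (2), inserting a generating family into the key identity gives $\operatorname{rank}=n-\dim_K(\mathfrak{m}/\mathfrak{m}^2)$, which equals $n-\dim_p(V)$ if and only if $\dim_K(\mathfrak{m}/\mathfrak{m}^2)=\dim_{\operatorname{Krull}}(\mathcal{O}_p)$, i.e.\ if and only if $V$ is smooth at $p$; this is (a)$\Leftrightarrow$(b). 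The implication (b)$\Rightarrow$(c) is immediate, and (c)$\Rightarrow$(a) holds because any family attaining rank $n-\dim_p(V)$ forces both inequalities in the chain used for (1) to be equalities, hence $\mathcal{O}_p$ is regular.

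For the final assertion I would argue locally and then globalize, and I expect the globalization to be the main obstacle. Since (c) at every $p\in V$ has just been shown to make $V$ smooth, each $\mathcal{O}_p$ is a regular local domain and $I(V)_p$ is prime of height $c:=n-\dim_p(V)$. Condition (c) at $p$ singles out $c$ of the $f_i$, say $f_{i_1},\ldots,f_{i_c}$, with linearly independent differentials in $M_pR/M_p^2R$; in the regular local ring $R$ these form part of a regular system of parameters, so $(f_{i_1},\ldots,f_{i_c})R$ is prime of height $c$ and sits inside $I(V)_p$. Two primes of equal height in a containment must coincide, so $(f_1,\ldots,f_r)R=I(V)_p$ for every $p\in V$, which also gives the local complete-intersection assertion. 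It remains to upgrade these local equalities to $I(V)=(f_1,\ldots,f_r)$: writing $J:=(f_1,\ldots,f_r)\subseteq I(V)$, one checks that the support of the finitely generated module $I(V)/J$ is empty. The local computation clears this support over $V$, and over points outside $V(J)$ it vanishes because $J$ contains a unit there; the delicate point — and the reason the statement should be read with $V(f_1,\ldots,f_r)=V$ — is to exclude spurious components of $V(J)$ lying off $V$, where hypothesis (c) says nothing and easy one-variable examples otherwise break the conclusion. Granting $V(J)=V$, the support of $I(V)/J$ is empty and $J=I(V)$ follows.
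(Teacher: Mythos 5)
Your proposal is correct, and it does more than the paper, which offers no proof at all for this proposition but simply points to Kunz and Matsumura. Your route --- identifying $d_pf$ with the class of $f$ in $M_pR/M_p^2R$, using $d_p(gf)=g(p)\,d_pf$ to see that the Jacobian rank of any generating family computes $n-\dim_K(\mathfrak m/\mathfrak m^2)$ via the conormal sequence, and then reading off (1) and all of (2) from the single inequality $\dim_{\operatorname{Krull}}(\mathcal O_p)\le\dim_K(\mathfrak m/\mathfrak m^2)$ --- is the standard textbook argument, and your local treatment of the ``moreover'' clause (extending $c=n-\dim_pV$ of the $f_i$ to a regular system of parameters, so that $(f_{i_1},\dots,f_{i_c})R$ and $I(V)R$ are primes of the same height $c$ and hence coincide) is complete and correct; it also yields the local complete intersection statement.

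The one substantive point you raise deserves to be made explicit: the global conclusion $I(V)=(f_1,\dots,f_r)$ is genuinely false as literally stated, not merely delicate. Take $V=\{0\}\subseteq\A^1$ and $f_1=x^2-x\in I(V)=(x)$; then $f_1'(0)=-1\ne 0$, so condition (c) holds at every point of $V$, yet $(x^2-x)\subsetneq(x)$. The missing hypothesis is exactly the one you name, $V(f_1,\dots,f_r)=V$, under which your support argument for $I(V)/J$ closes the gap (the localization at points of $V$ is handled by the height comparison, and at points off $V=V(J)$ the ideal $J$ contains a unit). This hypothesis is automatically satisfied in the paper's only use of the clause, namely the proof of Proposition \ref{prop:dimTV}, where the polynomials $f_1,\dots,f_r,\nabla f_1\cdot\y,\dots,\nabla f_r\cdot\y$ cut out $TV$ set-theoretically because the $f_i$ generate $I(V)$; so the paper's downstream results are unaffected, but the proposition as stated should carry the extra assumption.
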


Given an \textit{arbitrary} algebraic irreducible variety $V$, we denote by $\text{Reg}(V)$ the set of all the points $p\in V$ such that $V$ is smooth at $p$. It is easy to see by means of the Jacobian Criterion that $\textrm{Reg}(V)$ is a Zariski dense open subset of $V$.

We now recall a remarkable result which states the existence of a system of generators for the ideal of a smooth variety whose degrees are bounded by $\deg(V)$:

\begin{proposition} \label{prop:mumford}
Let $V \subseteq \mathbb{A}^n$ be a smooth algebraic variety. There exist polynomials $f_1,\ldots,f_r \in I(V)$ such that
$I(V) = (f_1,\ldots,f_r)$  and $\deg(f_i) \leq \deg(V)$ for $ i=1, \ldots, r$.
\end{proposition}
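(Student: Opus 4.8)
The plan is to prove Proposition \ref{prop:mumford}, which asserts that a smooth variety $V\subseteq \A^n$ admits a system of generators of its ideal $I(V)$ with all degrees bounded by $\deg(V)$.

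The plan is to derive the result from the Jacobian Criterion (Proposition \ref{prop: jacobian}), whose final clause reduces the problem to a local, pointwise question about gradients. Writing $D:=\deg(V)$, it suffices to produce finitely many polynomials $f_1,\ldots,f_r\in I(V)$ with $\deg(f_i)\le D$ such that at every point $p\in V$ the Jacobian matrix $\partial(f_1,\ldots,f_r)/\partial(x_1,\ldots,x_n)(p)$ has rank $n-\dim_p(V)$; once this is achieved, condition (c) of Proposition \ref{prop: jacobian} holds at every $p\in V$ and the quoted last statement of that proposition yields $I(V)=(f_1,\ldots,f_r)$ directly. Since a smooth variety is the disjoint union of its (smooth, irreducible) components and $\dim_p(V)$ is locally constant, I will carry out the argument assuming $V$ irreducible of dimension $d$, so that the target rank is the constant $n-d$; the general case is recovered component by component.

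The core is the following claim: for every $p\in V$ the gradients $\{\nabla f(p): f\in I(V),\ \deg(f)\le D\}$ already span the whole conormal space $(T_pV)^{\perp}$, which has dimension $n-d$ by smoothness. The inclusion $\subseteq$ is automatic since every $f\in I(V)$ vanishes on $V$. For the reverse it is enough to show that for each $v\notin T_pV$ there is some degree-$\le D$ element $f\in I(V)$ with $\nabla f(p)\cdot v\ne 0$. I would obtain such an $f$ from a generic linear projection $\pi:\A^n\to\A^{d+1}$: for generic $\pi$ the map $\pi|_V$ is birational onto its image and $\overline{\pi(V)}$ is a hypersurface; pulling a generic line of $\A^{d+1}$ back by $\pi$ yields a linear section of $V$ by a linear space of complementary dimension $n-d$, so $\overline{\pi(V)}$ meets the line in at most $D$ points and hence $\deg(\overline{\pi(V)})\le D$, using the characterization of the geometric degree recalled in Subsection \ref{section:bezout}. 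If $g$ is the squarefree equation of $\overline{\pi(V)}$, then $f:=g\circ\pi\in I(V)$ and $\deg(f)=\deg(g)\le D$. Because $\pi$ is linear, $\nabla f(p)\cdot v=\nabla g(\pi(p))\cdot\pi(v)$; choosing $\pi$ generic so that $\pi(p)$ is a smooth point of $\overline{\pi(V)}$ (possible since $p$ is smooth on $V$ and $\pi|_V$ is unramified at $p$ for generic $\pi$) and so that $\pi(v)\notin\pi(T_pV)=T_{\pi(p)}\overline{\pi(V)}=\ker\nabla g(\pi(p))$, this pairing is nonzero, proving the claim.

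With the claim in hand, a finite generating set is produced by a Noetherian descent. Starting from any $f_1,\ldots,f_k\in I(V)$ of degree $\le D$, let $B_k\subseteq V$ be the closed set of points where the rank of their Jacobian is strictly less than $n-d$. If $B_k\ne\emptyset$, pick $p\in B_k$; since the span of $\nabla f_1(p),\ldots,\nabla f_k(p)$ has dimension $<n-d$ while the claim guarantees the full conormal space is spanned by degree-$\le D$ gradients, there is $f_{k+1}\in I(V)$ of degree $\le D$ whose gradient at $p$ is not in that span, so $p\notin B_{k+1}\subseteq B_k$. The resulting strictly descending chain of closed subsets must terminate, necessarily at $B_r=\emptyset$, and then $f_1,\ldots,f_r$ satisfy condition (c) at every point of $V$; Proposition \ref{prop: jacobian} finishes the proof in the irreducible case.

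The step I expect to be most delicate is the genericity bookkeeping in the claim: one must simultaneously guarantee that $\pi$ keeps $\overline{\pi(V)}$ of degree $\le D$, that $\pi(p)$ lands on the smooth locus of the image, and that a prescribed normal direction $v$ survives the projection, and each of these is an open dense condition whose common refinement must be checked to be nonempty for every individual pair $(p,v)$. The remaining loose end is the passage from irreducible to merely smooth $V$: here the irreducible components are pairwise disjoint (a common point would be singular), so the ideals $I(V_i)$ are pairwise comaximal and $I(V)=\bigcap_i I(V_i)$; one must then assemble the per-component generators into generators of the intersection while keeping degrees bounded by $\deg(V)=\sum_i\deg(V_i)$, which is the one genuinely bookkeeping-heavy point of the argument.
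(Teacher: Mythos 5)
The paper itself does not prove this proposition: it only cites \cite{Mumfordvarieties}, \cite{seidenberg1975} and \cite{catanese92}. Your argument is essentially a reconstruction of Mumford's proof: the degree-$\le \deg(V)$ elements of $I(V)$ you produce are the pullbacks $g\circ\pi$ of the reduced equations of the hypersurfaces $\overline{\pi(V)}\subseteq\A^{d+1}$, i.e.\ the cones over $V$ with generic linear vertex. Your central claim (that the gradients of these elements span the conormal space at every point of $V$) and the Noetherian descent that extracts a finite subfamily are both correct as you set them up, and the genericity bookkeeping you worry about is only per pair $(p,v)$, which is harmless.

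The genuine gap is in the very last step. The ``moreover'' clause of Proposition \ref{prop: jacobian} is not true as you invoke it without the additional hypothesis $V(f_1,\ldots,f_r)=V$: take $V=\{0\}\subseteq\A^1$ and $f_1=x^2-x\in I(V)$; condition (c) holds at the unique point of $V$, yet $(f_1)\ne (x)=I(V)$. What the Jacobian criterion really gives is that $(f_1,\ldots,f_r)$ and $I(V)$ have the same localization at every point \emph{of} $V$; to upgrade this to equality of ideals one must also know that $(f_1,\ldots,f_r)$ is not contained in the maximal ideal of any point outside $V$, i.e.\ that the $f_i$ cut out $V$ set-theoretically. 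Your descent controls only the rank of the Jacobian along $V$ and says nothing about common zeros away from $V$: for the affine twisted cubic, for instance, finitely many cones $\pi_i^{-1}(\overline{\pi_i(V)})$ can have full conormal rank along $V$ while their intersection still contains components disjoint from $V$, and then the ideal they generate is strictly smaller than $I(V)$. The fix is routine with your own tools: for each $q\notin V$ a generic $\pi$ satisfies $\pi(q)\notin\overline{\pi(V)}$ (the projective center, of dimension $n-d-2$, must avoid the $\le d$-dimensional set of secant directions from $q$ to $V$, and $\pi|_V$ finite makes $\pi(V)$ closed), so a second Noetherian descent yields finitely many further degree-$\le\deg(V)$ elements of $I(V)$ with common zero set exactly $V$; adjoining these and then comparing localizations at all maximal ideals gives $I(V)=(f_1,\ldots,f_r)$. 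Finally, the reducible case you leave as a loose end is actually painless: the components are pairwise disjoint, so the ideals $I(V_i)$ are pairwise comaximal, hence $I(V)=\bigcap_i I(V_i)=\prod_i I(V_i)$ is generated by the products of the per-component generators, whose degrees are at most $\sum_i\deg(V_i)=\deg(V)$.
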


\begin{proof}
See for instance \cite[Theorem 1]{Mumfordvarieties}, \cite[Section 2]{seidenberg1975} or  \cite[Section 2]{catanese92}.   
\end{proof}

\subsection{On the cardinality of a zero-dimensional fiber}

In this section we discuss more or less folkloric results concerning the number of points in zero-dimensional fibers of morphisms between algebraic varieties that we need in the sequel.

Let $V\subseteq \A^n$ and $W\subseteq \A^m$ be irreducible algebraic varieties and $f:V\to W$ a morphism. 
We say that $f$ is \emph{dominant} if the image of $f$ is dense in $W$ or, equivalently if the induced ring morphism $f^*:K[W]\to K[V]$ is injective,  that $f$ is \emph{finite} if $f^*$ is integral, and that $f$ is  \emph{quasi-finite} if for every $y$ in the image of $f$, the fiber $f^{-1}(y)$ is a finite set. It is easy to see that finite implies quasi-finite (see for instance \cite[Ch.~1, Section 5.3]{shafarevich2013basic}). Note that from the Theorem on the dimension of fibers (see for instance \cite[Ch.~1, Section 6.3, Theorem 1.25]{shafarevich2013basic}), if $f$ is dominant and quasi-finite, necessarily $\dim(V)=\dim(W)$; in particular, there is a natural field inclusion $K(W)\hookrightarrow K(V)$ which is algebraic and finite.

\begin{proposition}\label{prop:fibersNormal}
Let $f:V\to W$ be a dominant quasi-finite morphism of irreducible affine algebraic varieties. Suppose that $W$ is normal (i.e.~$K[W]$ is integrally closed in its fraction field). Then, for every $y$ in the image of $f$ we have that $ \# f^{-1}(y) \le [K(V):K(W)].$ 
Moreover, the equality holds in a non-empty Zariski open subset of $W$.
\end{proposition}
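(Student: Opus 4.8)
The plan is to reduce to the case of a \emph{finite} morphism and then exploit the normality of $W$ through the standard fact that the minimal polynomial of an element integral over an integrally closed domain already has its coefficients in that domain. Throughout, write $n=[K(V):K(W)]$; this is finite because, as noted before the statement, $f$ being dominant and quasi-finite forces $\dim(V)=\dim(W)$, so that $K(W)\hookrightarrow K(V)$ is a finite algebraic field extension.

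First I would settle the finite case, assuming that $f^*$ makes $K[V]$ integral over $K[W]$. Fix $y$ in the image with fiber $f^{-1}(y)=\{p_1,\dots,p_k\}$ (finite by quasi-finiteness). Since the $p_i$ are pairwise distinct closed points of the affine variety $V$, their maximal ideals are comaximal, and the Chinese Remainder Theorem yields a regular function $g\in K[V]$ with $g(p_1),\dots,g(p_k)$ pairwise distinct. As $g$ is integral over $K[W]$ and $W$ is normal, the minimal polynomial of $g$ over $K(W)$ is monic of degree $m\le n$ with coefficients in $K[W]$; write it as $g^m+a_{m-1}g^{m-1}+\dots+a_0=0$ with $a_j\in K[W]$. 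Evaluating this identity of regular functions at each $p_i$ and using $a_j(p_i)=a_j(y)$, I find that every $g(p_i)$ is a root of the single univariate polynomial $T^m+a_{m-1}(y)T^{m-1}+\dots+a_0(y)$, so the distinctness of the $g(p_i)$ gives $k\le m\le n$. I want to stress that normality is used precisely here: it is what guarantees a monic relation of degree $\le n$ rather than one with a leading coefficient that could vanish at $y$.

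For the generic equality, still in the finite case, I would use that $\mathrm{char}\,K=0$, so that $K(V)/K(W)$ is separable; by the primitive element theorem I choose $g$ generating $K(V)$ over $K(W)$, with minimal polynomial of degree exactly $n$ and nonzero discriminant. On the dense open subset $U\subseteq W$ where this discriminant is invertible and where $K[f^{-1}(U)]$ is generated by $g$ over $K[U]$, the specialized polynomial has $n$ distinct roots, each of which is attained by exactly one point of the fiber, whence $\#f^{-1}(y)=n$ for all $y\in U$.

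Finally, to pass from quasi-finite to finite I would invoke Zariski's Main Theorem to factor $f=\bar f\circ j$ with $j:V\to\bar V$ an open immersion and $\bar f:\bar V\to W$ finite with $K(\bar V)=K(V)$. The inequality then follows since $f^{-1}(y)$ is identified with $j(V)\cap\bar f^{-1}(y)\subseteq\bar f^{-1}(y)$, so that $\#f^{-1}(y)\le\#\bar f^{-1}(y)\le n$. For the equality, I observe that $\bar V\setminus j(V)$ is closed of dimension $<\dim(W)$, so its image under the finite map $\bar f$ lies in a proper closed subset $Z\subsetneq W$; on the dense open set $U\setminus Z$ the entire fiber $\bar f^{-1}(y)$ lies in $j(V)$, giving $\#f^{-1}(y)=\#\bar f^{-1}(y)=n$. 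The hard part will be the reduction in this last paragraph: the clean upper bound and the generic equality both belong to the finite setting, and the genuine work is in controlling how the fiber of $f$ sits inside the fiber of $\bar f$ so that both the inequality and, on a possibly smaller dense open, the equality are preserved under the open immersion.
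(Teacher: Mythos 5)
Your proof is correct. Note that the paper does not actually prove this proposition: it simply cites \cite[Theorem 66.5]{musili}, so there is no in-paper argument to compare against. What you have written is the standard textbook proof underlying that reference, and all three steps are sound: the separating function obtained from the Chinese Remainder Theorem together with the fact that an element integral over the integrally closed ring $K[W]$ has its minimal polynomial in $K[W][T]$ (this is \cite[Proposition 5.15]{atiyah1994introduction}, which the paper itself invokes elsewhere) gives the uniform bound in the finite case; the primitive element plus a nonvanishing discriminant gives generic equality; and Zariski's Main Theorem handles the passage from quasi-finite to finite, where your observation that the finite map carries the lower-dimensional set $\bar V\setminus j(V)$ into a proper closed subset of $W$ is exactly the point needed to preserve generic equality. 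Two small spots deserve a sentence more of care if this were written out in full: first, the existence of an open $U$ with $K[f^{-1}(U)]=K[U][g]$ requires inverting a common denominator $h\in K[W]$ so that the algebra generators of $K[V]$ become polynomials in $g$ over $K[W][1/h]$, and one then needs $K[f^{-1}(U)]\cong K[U][T]/(P)$ (not merely generation by $g$) to identify fiber points with roots bijectively --- this follows because the kernel of $K[U][T]/(P)\to K[U][g]$ is a torsion submodule of a free module over a domain, hence zero; second, in the ZMT factorization you should replace $\bar V$ by the closure of $j(V)$ with its reduced structure so that it is an irreducible affine variety with $K(\bar V)=K(V)$ and the finite case applies verbatim.
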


\begin{proof}
See \cite[Theorem 66.5]{musili}.
\end{proof}

\bigskip

We notice that if $W$ is not assumed to be normal, the inequality in the above theorem may not be true for \emph{all} the fibers; however, the existence of a dense open set where the equality holds remains true. In this sense, we will use the following consequence of the previous result:

\begin{corollary}\label{coro:fibra}
Let $f:V \rightarrow W$ be a dominant morphism of irreducible affine varieties of the same dimension. There exists a non-empty Zariski open subset $\mathcal{U}\subseteq W$ such that for all $y\in \mathcal{U}$ the fiber $f^{-1}(y)$ is finite and $\# f^{-1}(y) = [K(V):K(W)]$ holds.
\end{corollary}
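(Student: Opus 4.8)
The plan is to reduce the statement to the normal case already settled in Proposition \ref{prop:fibersNormal}, by passing to suitable affine open subsets and invoking the normalization of $W$. Throughout I use that replacing $V$ and $W$ by dense open subsets changes neither the fields of rational functions nor the generic behaviour of the fibers.

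First I would arrange that $f$ is quasi-finite after shrinking. Since $f$ is dominant and $\dim(V)=\dim(W)$, the Theorem on the dimension of fibers guarantees a non-empty open subset $\mathcal{U}_0\subseteq W$ over which every fiber of $f$ has dimension $0$, hence is finite. Shrinking $\mathcal{U}_0$ to a distinguished open set $D(g_0)=\{g_0\neq 0\}$, I may assume $\mathcal{U}_0$ is affine; then $f^{-1}(\mathcal{U}_0)=D(g_0\circ f)$ is a non-empty distinguished open of $V$, so it is irreducible and affine, and the restriction $f\colon f^{-1}(\mathcal{U}_0)\to \mathcal{U}_0$ is a dominant quasi-finite morphism. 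As $\mathcal{U}_0$ and $f^{-1}(\mathcal{U}_0)$ are dense open subsets, their function fields are unchanged: $K(\mathcal{U}_0)=K(W)$ and $K(f^{-1}(\mathcal{U}_0))=K(V)$.

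Next I would replace $W$ by its normal locus. Let $\nu\colon \widetilde{W}\to W$ be the normalization of $W$. In characteristic $0$ the morphism $\nu$ is finite and birational, hence an isomorphism over a non-empty open subset of $W$; equivalently, the non-normal locus of $W$ is a proper closed subvariety. Thus there is a distinguished affine open $\mathcal{U}_1\subseteq \mathcal{U}_0$ on which $W$ is already normal (a localization of an integrally closed domain is integrally closed, so $\mathcal{U}_1$ is a normal affine variety). Restricting once more, $f\colon f^{-1}(\mathcal{U}_1)\to \mathcal{U}_1$ is a dominant quasi-finite morphism of irreducible affine varieties with $\mathcal{U}_1$ normal, and again $K(\mathcal{U}_1)=K(W)$ and $K(f^{-1}(\mathcal{U}_1))=K(V)$.

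Finally I would apply Proposition \ref{prop:fibersNormal} to this restricted morphism: it yields a non-empty Zariski open subset $\mathcal{U}\subseteq \mathcal{U}_1$ such that $\# f^{-1}(y)=[K(V):K(W)]$ for every $y\in\mathcal{U}$, the fibers over $\mathcal{U}$ being finite by quasi-finiteness. Since $\mathcal{U}$ is open in $\mathcal{U}_1$ and hence open in $W$, this is the required open subset. The only delicate point is the reduction to the normal case: one must know that $\nu$ is finite and birational so that it is an isomorphism over a dense open subset, and one must keep every intermediate open subset affine (by taking distinguished opens) in order to legitimately apply Proposition \ref{prop:fibersNormal}, which is stated for affine varieties.
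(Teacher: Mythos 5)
Your proof is correct and follows essentially the same route as the paper: shrink $W$ to an affine (distinguished) open subset over which the fibers are finite and the coordinate ring is normal, restrict $f$, and apply Proposition \ref{prop:fibersNormal}, noting that the function fields are unchanged. The only cosmetic difference is that you locate a normal dense open via the normalization map being finite and birational, whereas the paper passes through the smooth locus $\operatorname{Reg}(W)$ (smooth implies normal); both are standard and equally valid.
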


\begin{proof} Being $f$ dominant, Chevalley's Theorem (see for instance  
\cite[Chapter I, \S 5.3, Theorem 1.14]{shafarevich2013basic}) 
implies that there exists a non-empty Zariski open set which is contained in the image of $f$. On the other hand, from the Theorem on the dimension of fibers, the fiber of a generic point is zero-dimensional because both varieties $V$ and $W$ have the same dimension. Finally, since $\textrm{Reg}(W)$ contains a dense open subset and the affine open sets are a basis of the Zariski topology of $W$, we may suppose that there exists a non-empty open $\mathcal{U}=W\setminus \{h=0\}$ for a suitable $h\in K[W]$ such that the morphism $f:f^{-1}(\mathcal{U})\to \mathcal{U}$ is under the conditions of the previous theorem (observe that $\mathcal{U}$ is isomorphic to a smooth affine variety, hence normal). The corollary follows by observing that $K(W)=K(\mathcal{U})$ and $K(V)=K(f^{-1}(\mathcal{U}))$.
\end{proof}

\section{The tangent bundle and the tangential variety} \label{sec:smooth}

Let $V\subseteq\A^n$ be a variety of dimension $d$, smooth at a point $p\in V$. We define $T_pV$, the \emph{tangent space} of $V$ at $p$, as the dual of the $d$-dimensional cotangent vector space $\mathfrak{m}/\mathfrak{m^2}$.
From a less intrinsic point of view (see for instance \cite[Section 6.3]{reid_1988}), if $f_1,\ldots,f_r$ is a system of generators of $I(V)$ and $\nabla f_1,\ldots ,\nabla f_r$ are their gradient vectors, the tangent space $T_pV$ is the linear subspace of $\A^n$:
\[
 T_pV:=\{{y}\in\A^n\ |\ \nabla f_1(p)\cdot{y}=0,\ldots ,\nabla f_r(p)\cdot{y}=0\}
\]
where $\cdot$ denotes the usual scalar product in $K^n$.

\begin{definition}
 Let $V\subseteq \A^n$ be a smooth algebraic variety. The tangent bundle $TV\subseteq\A^{2n}$ is defined as the set:
 \[
  TV:=\{ (x,{y})\in\A^{2n}\ |\ x\in V,\ {y}\in T_xV\}.
 \]
\end{definition}

Clearly $TV$ is also an algebraic variety: if $I(V)=(f_1,\ldots,f_r)$ then $TV$ is defined by
\begin{equation}\label{eq:defTV}
 TV=\{(x,y)\in \A^{2n} \mid f_1({x})=0,\ldots,f_r({x})=0,\nabla f_1({x})\cdot {y}=0,\ldots ,\nabla f_r({x})\cdot {y}=0\}.
\end{equation}

It is easy to see that if $V$ is smooth and irreducible then $TV$ inherits both properties (see \cite{Kunz1999} for a more general result):

\begin{proposition} \label{prop:dimTV}
 Let $V\subseteq\A^n$ be a smooth irreducible variety of dimension $d$. Then, $TV\subseteq \A^{2n}$ is also a smooth irreducible variety and $\dim(TV) = 2d$. Moreover, if $I(V)=(f_1,\ldots,f_r)\subseteq K[\x]$ then $I(TV)=(f_1,\ldots,f_r,\nabla f_1\cdot\y,\ldots,\nabla f_r\cdot \y)\subseteq K[\x, \y]$, where $\y:=y_1,\ldots,y_n$ are new indeterminates.
\end{proposition}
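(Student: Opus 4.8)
The plan is to establish the three assertions—smoothness, irreducibility, and dimension—and then identify the ideal. The cleanest route is to work locally and exploit the fact that $V$ is smooth, so by Proposition \ref{prop: jacobian} it is locally a complete intersection with the Jacobian attaining full rank $n-d$ at every point. Concretely, fix $I(V)=(f_1,\ldots,f_r)$ and consider the $2r$ polynomials $g_i:=f_i(\x)$ and $h_i:=\nabla f_i(\x)\cdot\y$ on $\A^{2n}$ whose common zero set is the variety $W$ defined by \eqref{eq:defTV}. I would first argue that $W$ is smooth of dimension $2d$ by computing the rank of the $2r\times 2n$ Jacobian of $(g_1,\ldots,g_r,h_1,\ldots,h_r)$ at a point $(p,q)\in W$. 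With the variables ordered as $\x$ then $\y$, this Jacobian has a block-triangular structure: the lower-right $r\times n$ block (derivatives of the $h_i$ with respect to $\y$) is exactly the matrix $J(p)$ of gradients $\nabla f_i(p)$, which has rank $n-d$ since $V$ is smooth at $p$; the upper-right block (derivatives of the $g_i$ with respect to $\y$) is zero. So the total rank is at least $(n-d)$ from the $\y$-columns plus another $n-d$ from the $\x$-derivatives of the $g_i$, giving rank exactly $2(n-d)=2n-2d$. By the Jacobian Criterion this shows $W$ is smooth at $(p,q)$ with local dimension $2n-(2n-2d)=2d$.

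Once smoothness and the dimension count are in hand, irreducibility follows from a fibration argument. I would use the projection $\pi_1:W\to V$ forgetting the $\y$-coordinates. For each $p\in V$, the fiber $\pi_1^{-1}(p)$ is the linear space $\{p\}\times T_pV$, which is irreducible of dimension $d$; moreover $\pi_1$ is surjective with all fibers of the same dimension. Since the base $V$ is irreducible and the fibers are irreducible of constant dimension, $W$ is irreducible—this is a standard consequence of the theorem on the dimension of fibers (one can also invoke that $TV$ is the total space of a vector bundle over the smooth irreducible $V$, hence irreducible). Alternatively, smoothness already implies that $W$ is a disjoint union of its irreducible components, each of dimension $2d$; combined with connectedness of $W$ (which follows from connectedness of $V$ together with the connected linear fibers) one concludes $W$ is irreducible. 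Either way this identifies $W=TV$ as a smooth irreducible variety of dimension $2d$.

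It remains to prove the ideal-theoretic statement $I(TV)=(f_1,\ldots,f_r,\nabla f_1\cdot\y,\ldots,\nabla f_r\cdot\y)$. This is where the main work lies. The containment $\subseteq$ is the nontrivial direction and is precisely where the full strength of the Jacobian Criterion is needed: I would invoke the final clause of Proposition \ref{prop: jacobian}, which asserts that if a system of polynomials in $I(W)$ realizes the full expected rank of the Jacobian at every point of $W$, then that system generates $I(W)$. Since the computation in the first paragraph shows the generators $g_i,h_i$ attain rank $2n-2d=2n-\dim(W)$ at every point $(p,q)\in W$, this criterion applies directly and yields that they generate $I(TV)$. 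The anticipated obstacle is making sure the rank computation is genuinely uniform over all of $W$, not merely generic; this hinges on the hypothesis that $V$ is smooth \emph{everywhere}, so that $\operatorname{rank} J(p)=n-d$ holds for every $p\in V$ and the block-triangular argument goes through at every point of the fiber.
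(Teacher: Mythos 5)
Your overall strategy coincides with the paper's (block-triangular Jacobian for smoothness and the ideal, the projection $\pi_1$ for dimension and irreducibility), but there is a genuine gap at the pivot of the argument: you never establish the lower bound $\dim_{(p,q)}W\ge 2d$. The block computation only shows that the rank of the Jacobian of $g_1,\dots,g_r,h_1,\dots,h_r$ at $(p,q)$ is \emph{at least} $2(n-d)$; by part 1 of Proposition \ref{prop: jacobian} this yields $\dim_{(p,q)}W\le 2d$ and nothing more. Your jump to ``rank exactly $2(n-d)$'' and ``smooth with local dimension $2d$'' is circular: to invoke part 2(c) of the criterion (and its final clause, on which your ideal-theoretic conclusion rests) you must already know that $\dim_{(p,q)}W=2d$; the criterion cannot produce that number for you. (Note also that a block lower-triangular matrix $\bigl(\begin{smallmatrix} J & 0\\ * & J\end{smallmatrix}\bigr)$ can have rank strictly larger than $2\operatorname{rank}(J)$, so ``exactly'' is not a linear-algebra fact either.) The paper closes this gap by observing that, $V$ being smooth at $p$, it is locally an ideal-theoretic complete intersection cut out by $n-d$ polynomials, so $W$ is locally cut out by $2(n-d)$ equations and Krull's height theorem gives every component through $(p,q)$ dimension at least $2d$. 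With that single addition, your rank computation does deliver smoothness, equidimensionality, and the generation of $I(TV)$.

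The irreducibility step also leans on a false principle. For affine (non-proper) morphisms, ``surjective onto an irreducible base with irreducible fibers of constant dimension'' does \emph{not} imply the source is irreducible: $V(xy-1)\cup\{(0,0)\}\to\A^1$ under the first projection has irreducible base and single-point fibers but a reducible, indeed disconnected, source; the theorem you have in mind requires projective varieties. For the same reason, ``connected base plus connected fibers'' does not by itself give a connected total space. What rescues the argument is equidimensionality: once every component of $W$ has dimension exactly $2d$ (again the missing lower bound), the images under $\pi_1$ of the connected components of $W$ are disjoint constructible subsets of $V$ of full dimension $d$, hence each is dense in the irreducible $V$, which forces there to be only one --- this is the paper's argument. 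Your parenthetical appeal to $TV$ being the total space of a vector bundle would also work, but Zariski-local triviality is itself something to justify rather than assert.
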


\begin{proof}
Let $\pi_1:TV\to V$ be the projection on the first $n$ coordinates. Clearly $\pi_1(TV)=V$ and the fiber of any point $p\in V$ is the $d$-dimensional linear subspace $T_pV$, in particular it is irreducible.
Let $TV=W_1\cup \cdots \cup W_s$ be the decomposition of $TV$ into irreducible components and $W_k$  any of them. Let $(p,q)\in W_k\setminus \bigcup_{l\ne k} W_l$. Since the fiber $\pi_1^{-1}(p)$ is irreducible, it must be included in $W_k$. Considering now the dominant map $\pi_1:W_k\to \pi_1(W_k)\subseteq V$, from the Theorem on the dimension of fibers (see for instance \cite[Ch.I, \S6.3, Theorem 1.25]{shafarevich2013basic}), we conclude that $d=\dim (\pi_1^{-1}(p))\ge \dim(W_k)-\dim (\pi_1(W_k))\ge \dim(W_k)-d$ and so, $\dim(W_k)\le 2d$. On the other hand, since $V$ is smooth at $p$, it is locally an ideal theoretical complete intersection variety given by $n-d$ polynomials. Hence, $TV$ (and then also $W_k$) in a Zariski neighborhood of the point $(p,q)$ is defined locally by the common zeroes of $2(n-d)$ equations and, therefore, the local dimension of $W_k$ is at least $2n-2(n-d)=2d$. Summarizing we have shown that $\dim(W_k)=2d$. In other words, $TV$ is equidimensional and $\dim(TV)=2d$. 

For any point $(x,y)\in TV$ the Jacobian matrix of the polynomials $f_1({\x}),\ldots, f_r({\x}),$ $\nabla f_1({\x}) \cdot {\y},\ldots ,\nabla f_r({\x})\cdot{\y}$ has the block-form
\[
 \left(
\begin{matrix}
\dfrac{\partial (f_1,\ldots,f_r)}{\partial (x_1,\ldots,x_n)}(x)  & 0 \\
* & \dfrac{\partial (f_1,\ldots,f_r)}{\partial (x_1,\ldots,x_n)}(x)
\end{matrix}
\right)
\]
and so, its rank is at least $2(n-d)=2n-\dim TV$; then, from the Jacobian Criterion (see Theorem \ref{prop: jacobian}), $TV$ is smooth and $I(TV)=(f_1,\ldots,f_r,\nabla f_1\cdot\y,\ldots,\nabla f_r\cdot\y)$. In order to see that $TV$ is irreducible, it suffices to show that $TV$ is connected: suppose that $TV=C_1\cup\cdots\cup C_N$ is the decomposition in connected components. Thus, for any point $p\in V$ we have that $\pi_1^{-1}(p)$ is the disjoint union of $\pi_1^{-1}(p)\cap C_j$, for $j=1,\ldots,N$. Since $\pi_1^{-1}(p)$ is the linear subspace $T_pV$ we deduce that for each $p\in V$ there exists a unique connected component $C_j$ of $TV$ such that $p\in \pi_1(C_j)$. In other words, $V$ is the \emph{disjoint} union $\pi_1(C_1)\cup\cdots\cup \pi_1(C_N)$ and each $\pi_1(C_j)$ is a constructible set whose dimension is $d$ that is contained in $V$. Since $V$ is assumed irreducible, all the disjoint sets $\pi_1(C_j)$ contain a Zariski dense open subset of $V$. Hence $N=1$ and $TV$ is connected.
\end{proof}

\bigskip

Combining Propositions \ref{prop:mumford} and \ref{prop:dimTV} we deduce:

\begin{corollary}
Let $V\subseteq \A^n$ be an irreducible smooth variety. The ideal $I(TV)$ can be generated by polynomials of degree bounded by $\deg(V)$. $\square$
\end{corollary}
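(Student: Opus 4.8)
The plan is to directly combine the two cited propositions. By Proposition \ref{prop:mumford}, since $V$ is smooth, its ideal $I(V)$ admits a system of generators $f_1,\ldots,f_r\in I(V)$ with $\deg(f_i)\le \deg(V)$ for all $i$. On the other hand, Proposition \ref{prop:dimTV} identifies the ideal of the tangent bundle explicitly: for \emph{any} choice of generators $f_1,\ldots,f_r$ of $I(V)$, one has
\[
I(TV)=(f_1,\ldots,f_r,\ \nabla f_1\cdot\y,\ldots,\nabla f_r\cdot\y)\subseteq K[\x,\y].
\]
So it suffices to bound the degrees of these $2r$ generators in the variable set $\x,\y$.

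First I would observe that the generators $f_1,\ldots,f_r$, viewed as elements of $K[\x,\y]$ not involving $\y$, have degree $\deg(f_i)\le \deg(V)$, so these are already under control. The only remaining point is to estimate the degree of the polynomials $\nabla f_i\cdot\y=\sum_{j=1}^n \frac{\partial f_i}{\partial x_j}(\x)\, y_j$. Each partial derivative $\partial f_i/\partial x_j$ has degree at most $\deg(f_i)-1\le \deg(V)-1$ in the $\x$ variables, and multiplying by the linear form $y_j$ (degree $1$ in the $\y$ variables) yields a polynomial of total degree at most $(\deg(V)-1)+1=\deg(V)$. Taking the sum over $j$ does not increase the total degree, so $\deg(\nabla f_i\cdot\y)\le \deg(V)$ as well.

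Therefore every generator in the explicit system provided by Proposition \ref{prop:dimTV}, built from the low-degree generators of Proposition \ref{prop:mumford}, has degree bounded by $\deg(V)$, which is exactly the assertion. Since the conclusion follows by a one-line degree count once the two cited results are invoked, there is essentially no obstacle here; the only subtle point is to make sure the generating set of $I(V)$ used to apply the formula for $I(TV)$ is the \emph{same} low-degree set supplied by Proposition \ref{prop:mumford}, which is legitimate because Proposition \ref{prop:dimTV} gives the ideal $I(TV)$ for an arbitrary system of generators of $I(V)$.
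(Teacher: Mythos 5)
Your proof is correct and is exactly the argument the paper intends: apply Proposition \ref{prop:dimTV} to the degree-bounded generators supplied by Proposition \ref{prop:mumford} and note that $\deg(\nabla f_i\cdot\y)\le(\deg(V)-1)+1=\deg(V)$. The paper leaves this as an immediate consequence of the two propositions, so no further comment is needed.
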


Our aim is to study the geometric degree of the tangent bundle of a smooth irreducible variety. 
We start noticing the following general lower bound:

\begin{proposition}\label{Prop: Lowerboundtrivial}
    Let $V \subseteq \mathbb{A}^n$ be a smooth irreducible algebraic variety. Then, $$\deg(TV) \geq \deg(V).$$
\end{proposition}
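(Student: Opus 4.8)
The plan is to exploit the projection $\pi_1:TV\to V$ onto the first $n$ coordinates together with the monotonicity of geometric degree under intersection with generic linear spaces. The key geometric observation is that $TV$ fibers over $V$ with each fiber $\pi_1^{-1}(p)=T_pV$ a $d$-dimensional linear subspace of $\A^n$ (sitting inside the second copy of $\A^n$). Since $\dim(TV)=2d$ by Proposition \ref{prop:dimTV}, a generic linear variety $L\subseteq\A^{2n}$ of complementary dimension $2n-2d$ meets $TV$ in exactly $\deg(TV)$ points. The idea is to \emph{choose} the linear section cleverly rather than generically: I want to cut with a linear space that first pins down a generic point $p\in V$ via its first $n$ coordinates, and then counts points in the fiber.

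\medskip

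Concretely, first I would take a generic linear variety $L_1\subseteq\A^n$ (in the $x$-coordinates) of dimension $n-d$, so that $V\cap L_1$ consists of exactly $\deg(V)$ points, each a smooth point of $V$. Pulling this back through $\pi_1$, the intersection $TV\cap(L_1\times\A^n)$ is the disjoint union, over the $\deg(V)$ points $p\in V\cap L_1$, of the tangent spaces $\{p\}\times T_pV$, each of dimension $d$. Next I would impose $d$ further generic linear conditions purely on the $y$-coordinates: a generic affine-linear space $L_2\subseteq\A^n$ of dimension $n-d$ meets each $d$-dimensional linear space $T_pV$ (regarded in the $y$-slot) in exactly one point, since two generic linear subspaces of complementary dimension in $\A^n$ meet in a single point. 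Therefore the linear variety $L:=L_1\times L_2$, which has dimension $(n-d)+(n-d)=2n-2d$, the correct complementary dimension, meets $TV$ in exactly $\deg(V)$ points.

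\medskip

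By the characterization of the degree recalled in Section \ref{section:bezout} (the degree is the \emph{maximal} finite cardinality of a linear section of complementary dimension), this exhibited section of cardinality $\deg(V)$ yields the inequality $\deg(TV)\ge \deg(V)$. The one point requiring care is the transversality/genericity argument ensuring that the chosen $L_1\times L_2$ actually realizes these cardinalities and that no points are lost or merged: I must check that $L_1$ can be chosen generic enough to meet $V$ in $\deg(V)$ reduced points lying in $\mathrm{Reg}(V)$, and \emph{simultaneously} that the fixed generic $L_2$ meets all $\deg(V)$ of the resulting tangent spaces $T_pV$ transversally in one point each. Since there are only finitely many such $p$ once $L_1$ is fixed, a generic $L_2$ works for all of them at once, so this is a routine dimension count.

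\medskip

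I expect the main (though mild) obstacle to be the bookkeeping of which copy of $\A^n$ the linear conditions act on: the product structure $L=L_1\times L_2$ is what makes the fibered counting transparent, and the essential input is simply that a generic linear space of dimension $n-d$ in $\A^n$ meets a fixed $d$-dimensional linear subspace in exactly one point. An alternative, slightly slicker route would be to observe that the composite map $TV\to V\hookrightarrow\A^n$ given by $\pi_1$ is dominant with $\dim(TV)=2d>d=\dim V$ only when $d>0$, so instead one argues directly via the linear section; the fibered-section approach above avoids any degree-of-field-extension computation and is the cleanest.
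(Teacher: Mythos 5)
Your proof is correct, but it takes a genuinely different route from the paper's. The paper's argument is a one-liner: $\pi_1$ is a linear map with $\pi_1(TV)=V$, and by \cite[Lemma 2]{Heintz1983} a linear map does not increase geometric degree, so $\deg(V)=\deg(\pi_1(TV))\le\deg(TV)$. You instead exhibit an explicit linear section of complementary dimension: a generic $L_1\subseteq\A^n$ of dimension $n-d$ cutting $V$ in $\deg(V)$ points, crossed with a generic affine $L_2\subseteq\A^n$ of dimension $n-d$ meeting each of the finitely many tangent spaces $T_pV$ in exactly one point, so that $(L_1\times L_2)\cap TV$ is a finite set of cardinality $\deg(V)$; the characterization of degree as the \emph{maximal} finite cardinality of such sections (recalled in Section \ref{section:bezout}) then gives the bound. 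Both arguments rest on facts from \cite{Heintz1983} already quoted in the preliminaries, but yours avoids the lemma on images under linear maps entirely and makes the fibered geometry of $TV$ over $V$ visible; in effect it reproves that lemma in this special case where the fibers are linear spaces. The paper's version buys brevity; yours buys self-containedness at the cost of the (routine but necessary) simultaneous-genericity check for $L_2$ against the finitely many tangent spaces, which you correctly identify and dispatch.
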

\begin{proof}
  Consider $\pi_1: \mathbb{A}^{2n} \to \mathbb{A}^{n}$ the projection to the first $n$ coordinates. Note that $\pi_1$ is linear and $\pi_1(TV) = V$; then, by \cite[Lemma 2]{Heintz1983}, we deduce that
$\deg(V) = \deg(\pi_1(TV)) \leq \deg(TV).$
\end{proof}

\bigskip
When the variety $V$ is a smooth hypersurface,  Bézout's inequality provides straightforwardly an upper bound for $\deg(TV)$. 

\begin{proposition}\label{proposition hipersup}
    Let $V \subseteq \mathbb{A}^n$ be a smooth irreducible hypersurface. Then:
    \begin{equation*}
        \deg(TV) \leq (\deg(V))^2.
    \end{equation*}
\end{proposition}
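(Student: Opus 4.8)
The plan is to realize $TV$ as the zero set of just two explicit polynomials in $\A^{2n}$, each of degree at most $\deg(V)$, and then to invoke the Bézout Inequality (Proposition \ref{prop:bezout}) directly. The whole argument is essentially a single Bézout estimate; the only content is setting up the clean two-equation description of $TV$ available in the hypersurface case.

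First I would record that, since $V$ is an irreducible hypersurface, its ideal is principal: $I(V)=(f)$ where $f$ is the square-free polynomial defining $V$, so that $\deg(f)=\deg(V)=:D$. By Proposition \ref{prop:dimTV}, the smoothness of $V$ guarantees that $I(TV)=(f,\nabla f\cdot\y)$; in particular, the description \eqref{eq:defTV} yields the set-theoretic equality
\[
TV=\{(x,y)\in\A^{2n}\mid f(x)=0,\ \nabla f(x)\cdot y=0\}=V(f,\nabla f\cdot\y).
\]
Since $V$ is smooth, $T_xV=\{y\mid \nabla f(x)\cdot y=0\}$ at every point $x\in V$, so this equality is genuine rather than just an inclusion.

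The key step is the degree bookkeeping for the two generators, viewed as polynomials in the $2n$ variables $\x,\y$. The first, $f$, involves only the $\x$-variables and has degree exactly $D$. For the second, $\nabla f\cdot\y=\sum_{i=1}^n (\partial f/\partial x_i)\,y_i$, each summand is a product of $\partial f/\partial x_i$ (of degree at most $D-1$) with the linear form $y_i$, so $\deg(\nabla f\cdot\y)\le D$. The one point to be careful about is not to forget that multiplication by $y_i$ raises the degree back to $D$; hence both generators contribute a factor $D$ rather than $f$ contributing $D$ and $\nabla f\cdot\y$ only $D-1$.

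Finally I would apply the first inequality of Proposition \ref{prop:bezout} to the two polynomials $f$ and $\nabla f\cdot\y$. Since $TV$ is irreducible (again Proposition \ref{prop:dimTV}), its geometric degree coincides with that of the variety $V(f,\nabla f\cdot\y)$, so
\[
\deg(TV)=\deg\big(V(f,\nabla f\cdot\y)\big)\le \deg(f)\cdot\deg(\nabla f\cdot\y)\le D\cdot D=(\deg(V))^2,
\]
which is the desired bound. I do not expect any real obstacle here: the argument is a one-step Bézout estimate, and all the substance lies in the principal-ideal structure of $I(V)$ in the hypersurface case together with the correct degree count for $\nabla f\cdot\y$.
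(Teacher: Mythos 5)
Your proof is correct and follows exactly the paper's argument: in the hypersurface case $TV$ is cut out by the two polynomials $f$ and $\nabla f\cdot\y$, each of degree at most $\deg(V)$, and the B\'ezout inequality of Proposition \ref{prop:bezout} gives the bound. Your additional degree bookkeeping for $\nabla f\cdot\y$ is accurate but adds nothing beyond the paper's one-line proof.
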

\begin{proof}
It suffices to note that, if $F$ is the reduced equation defining $V$, the tangent bundle $TV$ is the zero set of the polynomials $F$ and  $\nabla F \cdot \textbf{y}$ (see Proposition \ref{prop:dimTV}), both of degree bounded by $\deg(V)$; hence the result follows from Bezout's Theorem.
\end{proof}

\bigskip
In Section \ref{sec: upper_bounds} we will discuss the optimality of this bound.

\bigskip

To finish this short introduction concerning the tangent bundle we recall a classical functorial-type ingredient: the \emph{differential of a regular morphism} between smooth algebraic varieties.

Let $V\subseteq\A^n$ and $W\subseteq\A^m$ be affine smooth algebraic varieties and let $g:V\to W$ be a regular morphism. Suppose that $g=(g_1,\ldots,g_m)$ where $g_1,\ldots,g_m\in K[\x]$ and set $J$ for the Jacobian matrix $\dfrac{\partial(g_1,\dots,g_m)}{\partial (x_1,\ldots,x_n)}$. By the chain rule, the map $(x,y)\mapsto (g(x),J(x)\cdot y)$ defines a regular morphism between the algebraic varieties $TV$ and $TW$.
This morphism is called the \emph{differential} of $g$ and denoted by $Dg$. In fact, it is easy to see that $Dg$ depends only on the regular map $g$ but not on the choice of polynomials $g_1,\ldots,g_m$ defining it.

If $p\in V$, we write ${D_pg}$  for the linear map between $T_pV$ and $T_{g(p)}W$ given by the Jacobian matrix $J(p)$. As customarily we say that $g$ is an \emph{immersion} (resp. a \emph{submersion}) in $p\in V$ if this map is injective (resp. surjective).

\bigskip

In our analysis of the tangent bundle of a smooth irreducible variety $V\subseteq \A^n$ it will be useful to deal with an associated algebraic variety that we introduce below.

Let $\pi_1$ and $\pi_2$ be the canonical projections of $\A^n\times \A^n$ onto the first and the second coordinates respectively. As we observed before, $\pi_1(TV)=V$ and the fiber $\pi_1^{-1}(p)$ over each point $p\in V$ is exactly its $d$-dimensional tangent space $T_pV$.
On the other hand, $\pi_2(TV)$ is not necessarily closed but its closure is an irreducible variety (since $V$ is irreducible).

\begin{definition} \label{tangential}
The algebraic variety $\overline{\pi_2(TV)}\subseteq \A^n$ is called the \emph{tangential variety} of $V$ and is denoted by $\Tan(V)$.
\end{definition}

Roughly speaking,  $\textrm{Tan}(V)$ is (the closure of) the set of all the lines which are tangent at some point of $V$, i.e. $\textrm{Tan}(V)=\overline{\bigcup_{p\in V}(T_pV)}$. The dimension and regularity of $\textrm{Tan}(V)$ in general are not too easy to determine and depend strongly on the geometry of the variety $V$.
The fact that a linear map does not increase the geometric degree (see \cite[Lemma 2]{Heintz1983}) implies the following relation between the degrees of the tangential variety and the tangent bundle:

\begin{proposition} \label{prop:degTanVledegTV} Let $V\subseteq \A^n$ be a smooth irreducible algebraic variety. Then,
\[\deg(\Tan(V)) \le \deg(TV).\]
\end{proposition}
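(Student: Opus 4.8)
The plan is to use the fact, explicitly recalled in the excerpt, that a linear map does not increase the geometric degree of an algebraic variety (see \cite[Lemma 2]{Heintz1983}), together with the very definition of the tangential variety as $\Tan(V)=\overline{\pi_2(TV)}$. First I would observe that $\pi_2:\A^{2n}\to\A^n$ is a linear map, so by the cited lemma we have $\deg(\overline{\pi_2(TV)})\le\deg(TV)$. Since $\overline{\pi_2(TV)}$ is precisely $\Tan(V)$ by Definition \ref{tangential}, this already yields the claimed inequality $\deg(\Tan(V))\le\deg(TV)$.

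The only point that requires care is whether the cited lemma applies directly to the Zariski closure of the image rather than to the image itself. The statement in \cite{Heintz1983} is typically phrased for the image of a variety under a linear projection, where ``image'' is understood up to closure (the degree of a constructible set being defined via its closure). Thus the second step is simply to note that $\deg(\pi_2(TV))$, interpreted as the degree of the closure $\overline{\pi_2(TV)}$, is exactly $\deg(\Tan(V))$, so no gap arises. Since $TV$ is irreducible by Proposition \ref{prop:dimTV}, its image and hence $\Tan(V)$ are irreducible, and the degree comparison is between two irreducible varieties, which keeps the bookkeeping elementary.

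I do not anticipate a genuine obstacle here: this is a one-line consequence of a previously recalled result, entirely parallel to the proof of Proposition \ref{Prop: Lowerboundtrivial}, where the same lemma was applied to $\pi_1$ to obtain $\deg(V)\le\deg(TV)$. The main conceptual content is recognizing that the tangential variety is by construction a linear-projection image of the tangent bundle, at which point the degree monotonicity of linear maps finishes the argument immediately.
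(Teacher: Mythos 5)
Your proposal is correct and is exactly the paper's argument: the proposition is stated as an immediate consequence of the fact that a linear map does not increase the geometric degree (\cite[Lemma 2]{Heintz1983}) applied to the projection $\pi_2$, just as Proposition \ref{Prop: Lowerboundtrivial} applies it to $\pi_1$. Your remark that the degree of the constructible image is by definition the degree of its closure, which is $\Tan(V)$, correctly disposes of the only point requiring care.
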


We finish this  subsection by proving a quite natural result concerning the linearity of $\textrm{Tan}(V)$ (see Proposition \ref{prop:tangential_lines} below). In order to prove it we need a known result relating generic projections of a variety and its degree (see for instance \cite[Proposition 45]{KPS2001} or \cite[Definition 18.1 (ii)]{harris1992algebraic}). We include the proof for the sake of completeness.

\begin{lemma} \label{Noether_grado}
Let $V\subseteq \A^n$ be a $d$-dimensional irreducible variety. There exists an affine linear epimorphism $h:\A^n\to\A^{d+1}$ such that $h$ restricted to $V$ is finite and $h(V)$ is a hypersurface of degree $\deg(V)$. Moreover, the property holds for a generic affine linear map $h:\A^n\to\A^{d+1}$.
\end{lemma}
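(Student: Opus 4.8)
The statement asserts that a generic affine-linear surjection $h:\A^n\to\A^{d+1}$ restricts to a finite map on $V$ whose image is a hypersurface of degree exactly $\deg(V)$. I would separate the claim into three parts: (i) for generic $h$, the restriction $h|_V$ is finite; (ii) the image $h(V)$ is a hypersurface, i.e.\ an irreducible subvariety of $\A^{d+1}$ of dimension $d$; and (iii) $\deg(h(V))=\deg(V)$. The main work is entirely geometric, and the natural tool throughout is the computation of degree as the generic number of intersection points with a complementary linear space (recalled at the start of Section~\ref{section:bezout}), together with the fact that linear maps do not increase degree (\cite[Lemma~2]{Heintz1983}).

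\emph{Finiteness and the hypersurface claim.} The plan is to realize $h$ as a generic linear projection and use a standard Noether-normalization–type argument. For finiteness I would first observe that a generic $h$ has the property that its fibres meet $V$ in finitely many points: the failure locus consists of those $h$ whose kernel (an $(n-d-1)$-dimensional linear space through the origin, after translation) meets the ``direction at infinity'' of $V$ along a positive-dimensional set, and a dimension count shows this is a proper closed condition on the Grassmannian parametrizing such $h$. Having finite fibres and $\dim V=d$, the image $h(V)$ is an irreducible constructible set of dimension $d$ inside $\A^{d+1}$; since $h|_V$ is a finite (hence closed) map, $h(V)$ is in fact Zariski closed, so it is an irreducible hypersurface in $\A^{d+1}$. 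Properness/finiteness of a generic linear projection to a space of dimension $\dim V+1$ is exactly the classical content behind \cite[Definition~18.1]{harris1992algebraic}, and I would cite that framework rather than reprove it from scratch.

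\emph{Degree equality.} This is the crux and where I expect the main obstacle. One inequality is free: since $h$ is linear and $h(V)$ is its image, \cite[Lemma~2]{Heintz1983} gives $\deg(h(V))\le\deg(V)$. For the reverse inequality I would compute $\deg(V)$ as the number of points in $V\cap L$ for a generic affine linear $L$ of dimension $n-d$, and relate this to intersecting $h(V)$ with a generic line in $\A^{d+1}$. The idea is that a generic line $\ell\subseteq\A^{d+1}$ pulls back under $h$ to a generic affine linear space $h^{-1}(\ell)$ of dimension $n-d$ in $\A^n$, so that $V\cap h^{-1}(\ell)$ computes $\deg(V)$; and because $h|_V$ is finite with generically the same fibre cardinality (Corollary~\ref{coro:fibra}), $h$ maps $V\cap h^{-1}(\ell)$ onto $h(V)\cap\ell$ in a way that, for generic $\ell$, is a bijection. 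The delicate point is ensuring that over a generic point the fibre of $h|_V$ is a single reduced point, so that no collapsing of intersection points occurs; this is precisely the genericity statement of Corollary~\ref{coro:fibra} applied to the finite dominant map $h|_V:V\to h(V)$, which forces $[K(V):K(h(V))]=1$ and hence $\deg(V)=\#\big(V\cap h^{-1}(\ell)\big)=\#\big(h(V)\cap\ell\big)=\deg(h(V))$.

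The hardest step will be justifying that the various genericity conditions—finiteness of $h|_V$, birationality of $h|_V$ onto its image, and transversality of $h(V)$ with a generic line—can be imposed \emph{simultaneously} for $h$ in a single nonempty Zariski-open subset of the space of affine-linear maps. I would handle this by noting that each condition defines a nonempty open subset (finiteness from the projection argument, birationality because a generic projection separates the finitely many points of a generic fibre, and transversality as the generic number-of-points count), and that a finite intersection of nonempty opens is nonempty; the generic-birationality of a linear projection of a variety to one dimension above its own is the one I would be most careful to state, invoking the classical fact that projection from a generic center induces a birational map onto the image hypersurface (again \cite{harris1992algebraic}).
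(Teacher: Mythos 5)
Your route is genuinely different from the paper's: it is essentially the classical projective argument behind \cite[Definition 18.1 (ii)]{harris1992algebraic} (which the paper itself cites as an alternative source), computing $\deg(h(V))$ by intersecting with a generic line $\ell\subseteq\A^{d+1}$, pulling back to the $(n-d)$-plane $h^{-1}(\ell)$, and matching the two counts via generic injectivity of $h|_V$. The paper instead argues algebraically: it takes a Noether normalization $R=K[\ell_1,\dots,\ell_d]\hookrightarrow K[V]$ plus one more generic linear form $\xi$ separating the $\deg(V)$ points of $V\cap\{\ell_1=\dots=\ell_d=0\}$, notes that the minimal polynomial $m_\xi$ of $\xi$ over $\operatorname{Frac}(R)$ lies in $R[T]$ because $R$ is integrally closed, and specializes it at $\ell_1=\dots=\ell_d=0$ to obtain a univariate polynomial with $\deg(V)$ distinct roots, whence $\deg_T(m_\xi)\ge\deg(V)$; combined with $\deg(h(V))\le\deg(V)$ from \cite[Lemma 2]{Heintz1983} this gives equality. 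The paper's version never needs birationality of the projection; yours gives a more geometric picture but must import the classical fact that a generic linear projection of a $d$-dimensional variety to $\A^{d+1}$ is birational onto its image.

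Two steps in your sketch need repair before the argument closes. First, Corollary \ref{coro:fibra} does not ``force'' $[K(V):K(h(V))]=1$; it only identifies the generic fibre cardinality with that field degree, so if the projection had generic degree $e>1$ your count would yield $\deg(h(V))=\deg(V)/e$ and nothing more. The birationality is an independent input (a secant-variety dimension count, valid here since $\operatorname{char}K=0$); you correctly single it out in your last paragraph, but as written the middle paragraph is circular. Second, your finiteness criterion is wrong as stated: $h|_V$ already fails to be finite when the part at infinity of $\ker h$ meets the points at infinity of $\overline V$ in a \emph{nonempty} (even finite) set --- project $xy=1$ onto the $x$-axis for the standard counterexample --- so ``meets along a positive-dimensional set'' is not the failure locus. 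The dimension count $(n-d-2)+(d-1)<n-1$ does what you need, but what it shows is generic \emph{disjointness} at infinity. Finally, note that for fixed $h$ the planes $h^{-1}(\ell)$ form a proper subfamily of all $(n-d)$-planes, so $\#(V\cap h^{-1}(\ell))=\deg(V)$ must be justified by genericity of the pair $(h,\ell)$, not of $\ell$ alone.
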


\begin{proof}
From the genericity on the Noether Normalization Lemma and on the definition of degree, there exist generic polynomials of degree one $\ell_1,\ldots,\ell_{d}\in K[\x]$ algebraically independent such that if $R:=K[\ell_1,\ldots,\ell_{d}]$, the natural morphism $R\hookrightarrow K[V]$ is integral and injective and the set $V\cap\{\ell_1=0,\ldots,\ell_{d}=0\}$ has $\deg(V)$ many points.
Let $\xi$ be a new generic linear form (algebraically independent of the previous ones) such that $\xi$ takes different values in all these $\deg(V)$ many points. Let $m_\xi\in \textrm{Frac}(R)[T]$ be the minimal polynomial of the class of $\xi$ in $K[V]$ over the fraction field of $R$. Since $R$ is integrally closed, this polynomial defines also the integral dependence equation of the class of $\xi$ of minimal degree (see \cite[Proposition 5.15]{atiyah1994introduction}) and so $m_\xi\in R[T]$. Evaluating this polynomial in $\ell_1=\ell_2=\cdots=\ell_d=0$, we obtain a univariate polynomial which vanishes in the different values of $\xi$ over the points of $V\cap\{\ell_1=0,\ldots,\ell_{d}=0\}$. In particular $\deg_T(m_\xi)\ge \deg(V)$.

Geometrically, we have constructed an affine linear morphism $h:\A^n\to\A^{d+1}$, $h=(\ell_1,\ldots,\ell_d,\xi)$, such that $h:V\to h(V)$ is finite, $h(V)=\{m_{\xi}=0\}$ and $\deg(h(V))\ge \deg_T(m_\xi)\ge \deg(V)$. On the other hand, $\deg(h(V))$ is bounded by $\deg(V)$ because $h(V)$ is the image of an algebraic variety of degree $\deg(V)$ under an affine linear morphism (see for instance \cite[Lemma 2]{Heintz1983}).\end{proof}

\bigskip
Now we are able to prove:

\begin{proposition} \label{prop:tangential_lines}
Let $V\subseteq \A^n$ be a $d$-dimensional irreducible variety such that there exists a non-empty Zariski open set $\mathcal{U}\subseteq \operatorname{Reg}(V)$ such that for all $p,q\in \mathcal{U}$ the equality $T_pV=T_qV$ holds. Then $V$ is a linear variety.
\end{proposition}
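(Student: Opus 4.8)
The plan is to show that the constancy of the tangent space on a dense open set forces all of $V$ to lie inside a single affine-linear subspace of dimension $d$, which (being irreducible of the same dimension $d$) must then coincide with that subspace.

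First I would fix the common tangent space: since $T_pV = T_qV$ for all $p,q \in \mathcal{U}$, there is a single $d$-dimensional linear subspace $W \subseteq \A^n$ (a vector subspace through the origin) with $T_pV = W$ for every $p \in \mathcal{U}$. Choose a point $p_0 \in \mathcal{U}$ and let $L := p_0 + W$ be the affine-linear subspace of dimension $d$ passing through $p_0$ with direction $W$. The goal becomes to prove $\mathcal{U} \subseteq L$, for then $V = \overline{\mathcal{U}} \subseteq \overline{L} = L$, and since $\dim(V) = \dim(L) = d$ with $V$ irreducible and $L$ irreducible of the same dimension, we conclude $V = L$, a linear variety.

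To prove $\mathcal{U} \subseteq L$, the natural approach is infinitesimal/differential: the condition that the tangent direction is constantly $W$ says that $V$ is ``flat'' in the direction transverse to $W$. Concretely, pick linear forms $\ell_1,\ldots,\ell_{n-d} \in K[\x]$ that cut out $W$, i.e. $W = \{y \in \A^n \mid \ell_i(y) = 0,\ i=1,\ldots,n-d\}$ (taking the $\ell_i$ homogeneous linear). I claim each $\ell_i$, viewed as an affine function vanishing on $L$ after the obvious constant shift, is constant along $V$. The cleanest way to see this is to consider a smooth curve (or more generally use connectedness of $\mathcal{U}$): for any $p \in \mathcal{U}$ the tangent space $T_pV = W$ means every tangent vector $v \in T_pV$ satisfies $\ell_i(v) = 0$, so the derivative of the regular function $\ell_i|_V$ vanishes at every point of $\mathcal{U}$. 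Since $\mathcal{U}$ is a dense open subset of the irreducible (hence connected) smooth locus and $\ell_i|_V$ is a regular function whose differential vanishes identically on the dense open $\mathcal{U}$, the function $\ell_i|_V$ is locally constant, hence constant on the connected set $V$. Evaluating at $p_0$ gives $\ell_i(p) = \ell_i(p_0)$ for all $p \in \mathcal{U}$, which is exactly the statement $p \in p_0 + W = L$.

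The main obstacle — and the point needing the most care — is justifying rigorously that ``differential identically zero on a dense open subset implies constant'' in the algebraic (not merely analytic) setting over an arbitrary algebraically closed field of characteristic $0$. One clean route is to use Lemma \ref{Noether_grado}: apply a generic affine-linear projection to reduce to understanding the image, or more directly argue via the identification of $T_pV$ with the kernel of the differential $D_p(\ell_i|_V)$ using the description of the tangent space through gradients given just before the Definition of $TV$. Since $\ell_i$ is linear, $\nabla \ell_i$ is a constant covector, and the hypothesis $T_pV = W$ for all $p \in \mathcal{U}$ says precisely that $\nabla \ell_i$ annihilates $T_pV$ at every such $p$; this means $\ell_i$ restricts to a function on $V$ whose gradient lies in the conormal space everywhere on $\mathcal{U}$, so $\ell_i|_V$ has vanishing differential on a dense set and is therefore constant because $V$ is irreducible in characteristic zero (where a regular function with everywhere-zero differential on a dense open of an irreducible variety is constant). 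I would present this last implication carefully, as it is the only non-formal step.
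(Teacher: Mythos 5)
Your argument is correct and reaches the conclusion by a genuinely different route from the paper. You fix the common tangent direction $W$, cut it out by linear forms $\ell_1,\ldots,\ell_{n-d}$, observe that each $\ell_i|_V$ has identically vanishing differential on $\mathcal{U}$, conclude that each $\ell_i|_V$ is constant, and hence that $V$ is contained in (and, by irreducibility and equality of dimensions, equals) the affine subspace $p_0+W$. The paper instead reduces to the hypersurface case via a generic finite linear projection $h:\A^n\to\A^{d+1}$ (Lemma \ref{Noether_grado}) chosen so that $\ker(h)\cap W=0$, and settles the hypersurface case by a divisibility argument: after a linear change of coordinates the partials $\partial f/\partial x_i$, $i<n$, vanish on $V$, hence each nonzero one would be divisible by $f$, which is impossible by degree, so $f$ depends on $x_n$ alone. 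Your approach is shorter and avoids the projection lemma entirely; the paper's approach stays at the level of polynomial identities and so never needs any general fact about differentials of regular functions.

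The one step you rightly single out does need a correct justification, and ``the differential vanishes, so the function is locally constant, so it is constant on the connected set'' is not the right mechanism in the Zariski topology (vanishing of $d_pg$ on an open set does not literally give Zariski-local constancy without an argument). The clean version is: if $g:=\ell_i|_V\in K[V]$ were non-constant, it would be transcendental over $K$ (since $K$ is algebraically closed and $K[V]$ is a domain), so in characteristic $0$ its differential $dg$ is nonzero in the $K(V)$-vector space $\Omega_{K(V)/K}$; hence $d_pg\neq 0$ at a generic point $p$ of $\operatorname{Reg}(V)$, contradicting its vanishing on the dense open set $\mathcal{U}$. (Equivalently, apply generic smoothness to the dominant map $g:V\to\A^1$.) This step genuinely uses characteristic $0$ --- in characteristic $p$ a non-constant $p$-th power has zero differential --- but the paper's proof uses the same hypothesis when it infers from $\partial f/\partial x_i\equiv 0$ that $x_i$ does not occur in $f$, so both arguments are valid under the paper's standing assumption.
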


\begin{proof}
We first prove the result for hypersurfaces and then, reduce the general case to this particular one.

If  $V$ is a hypersurface, it is given by a single equation $f=0$, with $f\in K[\x]$ irreducible. After a suitable linear change of coordinates we may suppose without loss of generality that, for any point $p\in\mathcal{U}$, we have $T_p(V)=\A^{n-1}\times \{0\}$. Therefore, all the derivatives $\partial f/\partial x_i$ vanish over $\mathcal{U}$ (hence over $V$) for $i=1,\ldots,n-1$. So, each of these partial derivatives which is not the zero polynomial must be divisible by $f$, but this is impossible by degree reasons. Then, $f$ is a polynomial in the single variable $x_n$ which is linear because it is irreducible and, therefore, $V$ is a hyperplane.

Suppose now that $V$ is not a hypersurface. Let $h:\A^n\to \A^{d+1}$ be as in Lemma \ref{Noether_grado} and set $W$ for the hypersurface $h(V)$. We know that $\deg(V)=\deg(W)$ and $\mathcal{G}:=h(\mathcal{U})\cap \textrm{Reg}(W)$ is a dense Zariski open subset of $W$. For any $p\in\mathcal{U}$ set $S:=T_pV$ (which does not depend on $p$). From the genericity of $h$ we may suppose also that $\ker(h)\cap S=0$, because $\dim(\ker(h))=n-(d+1)$ and $\dim(S)=d$.  Therefore, for any $p\in h^{-1}(\mathcal{G})\cap \mathcal{U}$, the differential $D_ph$ is surjective and independent of the point $p$, since $h$ is $K$-linear and $\ker(h)\cap T_p V=0$ for all  $p\in \mathcal{U}$. We conclude that $T_{h(p)}W=T_{h(q)}W$ for all $p,q\in h^{-1}(\mathcal{G})\cap \mathcal{U}$; in other words, $T_{p'}W=T_{q'}W$ for all $p',q'\in \mathcal{G}$. Now, by applying the result to the hypersurface $W$ and its open subset $\mathcal{G}$, we infer that $W$ is a linear variety. Therefore, $1=\deg(W)=\deg(V)$ and so, $V$ is a linear variety too (see for instance \cite[Chapter I, Exercise 7.6]{hartshorne1977algebraic}). 
\end{proof}
 
\section{The degree of the tangent bundle of a smooth curve} \label{sec:curves}

This section is devoted to studying the degree of the tangent bundle of a smooth irreducible algebraic curve. 
We will first prove a general result that expresses this degree in terms of intrinsic parameters associated to the curve. Then, we will focus on the particular case of curves given by rational or parametric representations.

We remark that, for the case of an arbitrary smooth \emph{plane} curve $\CC$, we have the optimal bound $\deg(T\CC)\le (\deg(\CC))^ 2$ (see Proposition \ref{proposition hipersup} for the upper bound and Example \ref{ejemplo optimalidad} below for the optimality).

\subsection{A formula for the degree}

We will start proving a formula for the degree of the tangent bundle of a smooth curve in terms of the degree of the curve, the degree of the tangential variety of the curve and an additional intrinsic invariant which we will introduce.

First, we determine the dimension of the tangential variety of a smooth curve (see also \cite[Exercise 8.1.2]{landsberg}).

\begin{lemma}\label{lem:tanC}
   If $\CC \subseteq  \mathbb{A}^n$ is a smooth irreducible algebraic curve which is not a line,  then $\dim(\operatorname{Tan}(\CC)) = 2$.
\end{lemma}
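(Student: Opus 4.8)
The plan is to pin down $\dim(\Tan(\CC))$ by squeezing it between $2$ and $2$. The upper bound is immediate: by definition $\Tan(\CC)=\overline{\pi_2(T\CC)}$ is the closure of the image of $T\CC$ under the linear projection $\pi_2$, and since $\dim(T\CC)=2$ by Proposition \ref{prop:dimTV} and a morphism cannot increase dimension, one gets $\dim(\Tan(\CC))\le 2$. (This is also the general bound $\dim(\Tan(V))\le\min\{2d,n\}$ recorded in the Introduction, with $d=1$ and $n\ge 2$, the case $n=1$ being excluded since it would force $\CC$ to be a line.)

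The content of the statement is the lower bound, and I would derive it by excluding $\dim(\Tan(\CC))\le 1$ through Proposition \ref{prop:tangential_lines}. First, $\Tan(\CC)$ cannot be zero-dimensional: for any $p\in\operatorname{Reg}(\CC)$ the tangent space $T_p\CC$ is a $1$-dimensional linear subspace of $\A^n$, and it is contained in $\Tan(\CC)$ because $T_p\CC=\pi_2(\pi_1^{-1}(p))\subseteq\pi_2(T\CC)\subseteq\overline{\pi_2(T\CC)}$; hence $\dim(\Tan(\CC))\ge 1$. I would then assume, toward a contradiction, that $\dim(\Tan(\CC))=1$. As $\CC$ is irreducible, $T\CC$ is irreducible by Proposition \ref{prop:dimTV}, so its image $\Tan(\CC)$ is irreducible too. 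Now, for every $p\in\operatorname{Reg}(\CC)$, the line $T_p\CC$ is an irreducible closed subset of dimension $1$ sitting inside the irreducible $1$-dimensional variety $\Tan(\CC)$; an irreducible closed subset of dimension equal to that of an irreducible variety must coincide with it, so $T_p\CC=\Tan(\CC)$ for all such $p$. In particular $T_p\CC=T_q\CC$ for all $p,q$ in the dense open set $\operatorname{Reg}(\CC)$, and Proposition \ref{prop:tangential_lines} then forces $\CC$ to be a linear variety, i.e.\ a line, contradicting the hypothesis. Therefore $\dim(\Tan(\CC))=2$.

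The only point I expect to need care is the legitimacy of the inclusion $T_p\CC\subseteq\Tan(\CC)$ and, more generally, the identification $\Tan(\CC)=\overline{\bigcup_{p\in\CC}T_p\CC}$ recorded after Definition \ref{tangential}; once this is clean, the argument reuses machinery already in place. A more geometric alternative would be to consider the Gauss map $\gamma:\operatorname{Reg}(\CC)\to\PP^{n-1}$ sending $p$ to the direction of $T_p\CC$: since $\CC$ is not a line, $\gamma$ is non-constant (again by Proposition \ref{prop:tangential_lines}), so its image is a projective curve, and $\Tan(\CC)$ is the affine cone over that image, giving $\dim(\Tan(\CC))=1+1=2$. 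I favor the reduction to Proposition \ref{prop:tangential_lines} because it avoids setting up the Gauss map formally and exploits the dimension and irreducibility facts for $T\CC$ that are already available.
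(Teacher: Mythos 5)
Your proposal is correct and follows essentially the same route as the paper: the upper bound comes from $\dim(T\CC)=2$ and the fact that a morphism does not raise dimension, and the lower bound rules out $\dim(\Tan(\CC))\le 1$ by observing that an irreducible variety of dimension $\le 1$ cannot contain two distinct tangent lines, which together with Proposition \ref{prop:tangential_lines} would force $\CC$ to be a line. The paper phrases the lower bound directly (``contains at least two different $1$-dimensional linear spaces, hence $\dim>1$'') rather than as a contradiction, but the content is identical.
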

\begin{proof}
 Since $T\CC$ is irreducible and $\dim(T\CC)=2$ (see Proposition \ref{prop:dimTV}), we have that $\operatorname{Tan}(\CC)$ is also irreducible and $\dim(\operatorname{Tan}(\CC)) \leq 2$.
 Taking into account that $\operatorname{Tan}(\CC)$ contains a union of $1$-dimensional linear vector spaces, it is an infinite set. In addition, since $\CC$ is not a line, by Proposition \ref{prop:tangential_lines}, it has at least two different tangent directions and so, $\operatorname{Tan}(\CC)$ is an irreducible algebraic variety that contains at least two different $1$-dimensional linear spaces. Therefore, $\dim(\operatorname{Tan}(\CC))>1$. The result follows.
 \end{proof}

\bigskip

Due to Lemma \ref{lem:tanC}, for a smooth irreducible curve $\CC$ which is not a line, the projection $\pi_2:T\CC \rightarrow \operatorname{Tan}(\CC)$ is a dominant map between irreducible algebraic varieties of the same dimension. Then, Corollary \ref{coro:fibra} implies that there is a Zariski dense open subset of the image of $\pi_2$ such that all its points have finite fibers of the same cardinality. This motivates the following definition.

\begin{definition}
    Let $\CC \subseteq  \mathbb{A}^n$ be a smooth irreducible algebraic curve that is not a line. We define
    \begin{equation*}
        \omega(\CC) = [K(\operatorname{Tan}(\CC)):K(T\CC)] =\# \pi_2^{-1}(v), \text{ for a generic } v \in \operatorname{Tan}(\CC).
    \end{equation*}
For a line $\CC$, we define $\omega(\CC) = 0$.
\end{definition}

Before relating this invariant with the degree of the tangent bundle, we will prove an upper bound for $\omega(\CC)$ in terms of the degree of $\CC$.

\begin{proposition}\label{prop:wC-bound}
If $\CC \subseteq  \mathbb{A}^n$ is a smooth irreducible algebraic curve which is not a line, then
$\omega(\CC) \leq \deg(\CC)(\deg(\CC)-1)$.
\end{proposition}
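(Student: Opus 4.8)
The plan is to reduce the count defining $\omega(\CC)$ to a B\'ezout estimate on a plane curve obtained by a generic linear projection. Recall that $\omega(\CC) = \#\pi_2^{-1}(v)$ for generic $v \in \Tan(\CC)$, and that $\pi_2^{-1}(v)$ is in bijection with $\{p \in \CC : v \in T_p\CC\}$, i.e.\ with the set of points of $\CC$ whose tangent direction is parallel to $v$. First I would fix, as in Lemma \ref{Noether_grado}, a generic affine linear projection $\psi = (\ell_1,\ell_2) : \A^n \to \A^2$ such that $D := \psi(\CC)$ is a plane curve of degree $\deg(\CC)$ and $\psi|_\CC$ is finite; since $\deg(\CC) = [K(\CC):K(D)]\,\deg(D)$ (by counting a generic hyperplane section of $\CC$, which maps $[K(\CC):K(D)]$-to-one onto a generic line section of $D$) and $\deg(D)=\deg(\CC)$, the extension degree equals $1$, so $\psi|_\CC$ is birational onto $D$. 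By genericity of $\psi$ I may also assume $\ker(\psi) \cap T_p\CC = 0$ for $p$ in a dense open subset of $\CC$, so that the differential $D\psi$ carries $T_p\CC$ isomorphically onto $T_{\psi(p)}D$ there, and that $\ker(\psi) \cap \Tan(\CC) = \{0\}$, whence $\psi|_{\Tan(\CC)}$ is dominant onto $\A^2$ (here I use $\dim\Tan(\CC) = 2$ from Lemma \ref{lem:tanC}).

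Next I would transport a generic fiber of $\pi_2$ to $D$. Let $F$ be the reduced (irreducible) equation of $D$; since $\CC$ is not a line, $\deg(F) = \deg(\CC) \ge 2$ and $D$ is not a line either. Let $B \subseteq \CC$ be the finite set of points where $\psi|_\CC$ fails to be injective or immersive, or whose image is a singular point of $D$. Because $\dim\Tan(\CC) = 2$, the set $\bigcup_{p\in B} T_p\CC$ is a finite union of lines, hence a proper closed subcone of $\Tan(\CC)$; therefore a generic $v \in \Tan(\CC)$ avoids it, and moreover $w := \psi(v)$ is a generic point of $\A^2$. For such $v$, every $p \in \pi_2^{-1}(v)$ lies outside $B$, so $q := \psi(p)$ is a smooth point of $D$ with $F(q) = 0$, and the relation $v \in T_p\CC$ gives $w = \psi(v) \in \psi(T_p\CC) = T_qD$, i.e.\ $\nabla F(q)\cdot w = 0$; distinct $p$ yield distinct $q$ by injectivity. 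Consequently
\[
\omega(\CC) = \#\pi_2^{-1}(v) \le \#\{q \in \A^2 : F(q) = 0,\ w_1 F_{x}(q) + w_2 F_{y}(q) = 0\}.
\]

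Finally I would close with the B\'ezout inequality (Proposition \ref{prop:bezout}). The polar polynomial $w_1 F_x + w_2 F_y$ has degree at most $\deg(F) - 1$ and, for generic $w$, is nonzero; since $F$ is irreducible of degree $\ge 2$, it cannot divide a nonzero polynomial of strictly smaller degree, so the two curves share no common component and their intersection is finite. B\'ezout then bounds the right-hand side by $\deg(F)(\deg(F)-1) = \deg(\CC)(\deg(\CC)-1)$, which is the claim. I expect the main obstacle to be the genericity bookkeeping in the middle step: one must choose $\psi$ generically so that birationality, generic immersivity, and dominance of $\psi|_{\Tan(\CC)}$ onto $\A^2$ hold simultaneously, and then choose $v$ generically so that its fiber avoids the finitely many exceptional tangent lines while $\psi(v)$ remains generic in $\A^2$ (ensuring the polar is nonzero and coprime to $F$) — it is precisely here that $\dim\Tan(\CC) = 2$ is essential.
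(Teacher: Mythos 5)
Your argument is correct, but it takes a genuinely different route from the paper's. The paper's proof is a one-step B\'ezout estimate performed directly in $\A^n$: it writes $\pi_2^{-1}(v)$ as (a set in bijection with) $\CC\cap\bigcap_i\{\nabla f_i(x)\cdot v=0\}$, where $f_1,\dots,f_r$ are generators of $I(\CC)$ of degree at most $\deg(\CC)$ supplied by Proposition \ref{prop:mumford}, and then applies the refined B\'ezout inequality of Proposition \ref{prop:bezout} with exponent $\dim(\CC)=1$ to get $\deg(\CC)(\deg(\CC)-1)$ at once. You instead project generically to the plane, check the projection is birational and compatible with tangent directions, and bound the image of the fiber by intersecting $D=\{F=0\}$ with the polar curve $w_1F_x+w_2F_y=0$ --- essentially the classical ``class of a plane curve'' computation. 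What each approach buys: the paper's is three lines but leans on two nontrivial inputs, Mumford's degree bound for generators of the ideal of a smooth variety and the Heintz--Schnorr form of B\'ezout; yours needs neither, using only classical plane B\'ezout for two coprime curves, at the price of the genericity bookkeeping you correctly flag (birationality of $\psi|_\CC$, triviality of $\ker(D\psi)\cap\Tan(\CC)$, avoidance of the exceptional tangent lines). Two small remarks: you should write $w=D\psi(v)$ rather than $w=\psi(v)$, since tangent vectors are transported by the linear part of the affine map, not by $\psi$ itself; and the genericity of $w$ in $\A^2$ is not actually needed at the last step --- for \emph{any} $w\ne 0$ the polar $w_1F_x+w_2F_y$ is a nonzero polynomial of degree $<\deg(F)$ (otherwise $F$ would depend on a single linear form and hence be reducible or linear), so it is automatically coprime to the irreducible $F$.
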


\begin{proof}
     For a generic $v \in \operatorname{Tan}(\CC)$ such that $\omega(\CC)= \# \pi_2^{-1}(v)$, consider the set
\begin{equation*}
    \CC_{v} =
    \{ x \in \mathbb{A}^{n} \mid x \in \CC,\
          v \in T_{x}\, \CC \}.
\end{equation*}
Note that $\CC_{v}\subseteq  \CC$
is a finite set which is in one-to-one correspondence with $\pi_2^{-1}(v)$. Then, in order to get an upper bound for $\omega(\CC)$, it suffices to estimate the degree of $\CC_{v}$.

As $\CC$ is a smooth curve, according to Proposition \ref{prop:mumford}, its vanishing ideal can be generated by polynomials $f_1,\dots, f_r\in K[\x]$ of degrees bounded by $\deg(\CC)$. Therefore,
\[\CC_{v} = \CC \cap \bigcap_{1\le i \le r}\{x \in \mathbb{A}^n \mid \nabla f_i(x) \cdot v=0 \}.\]
Now, taking into account that each of the hypersurfaces in the intersection is defined by a polynomial of degree at most $\deg(\CC) - 1$, we apply the first inequality in Proposition \ref{prop:bezout}  to conclude that $\deg(\CC_{v}) \le \deg(\CC) (\deg(\CC)-1)$.
\end{proof}

\bigskip

Under the same assumptions as in Proposition \ref{prop:wC-bound}, 
it can be easily seen that, for every $v\in \Tan(\CC)$, $v\ne 0$, the fiber $\pi_2^{-1}(v)$ is either empty or consists of finitely many points.
If, in addition, the variety  $\operatorname{Tan}(\CC)$ is normal, we have that
$\# \pi_2^{-1}(v) \leq [K(\operatorname{Tan}(\CC)):K(T\CC)]$ for every $v \in \operatorname{Tan}(\CC)$, $v \ne 0$ 
(see Proposition \ref{prop:fibersNormal}),
and, as a consequence,
\begin{equation*}
    \omega(\CC) = \max_{v \in \operatorname{Tan}(\CC)\setminus\{0\}} \#\pi_2^{-1}(v) = \max_{p \in \CC} \#\{ q \in \CC \mid T_q \CC = T_p \CC \}.
\end{equation*}
Now, for an irreducible smooth \emph{plane} curve that is not a line, from Lemma \ref{lem:tanC}, it follows that its tangential variety is $\mathbb{A}^2$ (in particular, a normal variety). Therefore, we have:

\begin{corollary}
If $\CC \subseteq  \mathbb{A}^2$ is a smooth irreducible curve which is not a line,  then
\begin{equation*}
    \omega(\CC) = \max_{p \in \CC} \#\{ q \in \CC \mid T_p \CC = T_q \CC \}. \quad \square
\end{equation*}
\end{corollary}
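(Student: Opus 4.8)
The plan is to reduce the corollary to the formula derived in the paragraph immediately preceding its statement, which already expresses $\omega(\CC)$ as a maximum of fiber cardinalities \emph{under the hypothesis that $\Tan(\CC)$ is normal}. Thus the only thing left to establish is that, for a smooth irreducible plane curve $\CC\subseteq\A^2$ which is not a line, the tangential variety $\Tan(\CC)$ is indeed normal.

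First I would invoke Lemma \ref{lem:tanC}, which gives $\dim(\Tan(\CC))=2$. Since $\Tan(\CC)$ is, by definition, an irreducible Zariski-closed subset of $\A^2$ whose dimension equals that of the ambient space, it must coincide with $\A^2$. The coordinate ring of $\A^2$ is the polynomial ring $K[x_1,x_2]$, which is a unique factorization domain and hence integrally closed in its fraction field; therefore $\A^2$ is normal, and the normality hypothesis required before the statement is automatically met.

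With normality in hand, I would simply quote the consequence of Proposition \ref{prop:fibersNormal} recorded above: for every $v\in\Tan(\CC)\setminus\{0\}$ the fiber $\pi_2^{-1}(v)$ is finite with $\#\pi_2^{-1}(v)\le [K(\Tan(\CC)):K(T\CC)]=\omega(\CC)$, the bound being attained generically. Combining this with the one-to-one correspondence between $\pi_2^{-1}(v)$ and the set $\{q\in\CC\mid v\in T_q\CC\}$ (exactly as in the proof of Proposition \ref{prop:wC-bound}) yields $\omega(\CC)=\max_{v\ne 0}\#\pi_2^{-1}(v)=\max_{p\in\CC}\#\{q\in\CC\mid T_q\CC=T_p\CC\}$.

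There is essentially no genuine obstacle here, as the corollary is a direct specialization of the normal case already treated. The only point demanding a word of care is the last identification: for nonzero $v$, since each $T_q\CC$ is a line, the condition $v\in T_q\CC$ is equivalent to $T_q\CC$ being the line spanned by $v$, and every tangent direction arises as $T_p\CC$ for some $p\in\CC$; hence the maximum over nonzero directions $v$ coincides with the maximum over points $p\in\CC$.
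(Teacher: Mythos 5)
Your proposal is correct and follows exactly the paper's argument: the paragraph preceding the corollary reduces everything to the normality of $\Tan(\CC)$, and the paper likewise observes that Lemma \ref{lem:tanC} forces $\Tan(\CC)=\A^2$, which is normal. Your extra remark justifying the passage from the maximum over nonzero tangent vectors $v$ to the maximum over points $p\in\CC$ is a welcome clarification of a step the paper leaves implicit.
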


The following example shows that the previous identity does not necessarily hold for an arbitrary smooth algebraic curve.

\begin{example}
Let $\CC \subseteq  \mathbb{A}^3$ be the algebraic curve parameterized by $\sigma(t) = \left (t, \frac{t^{3}}{3}-t,\frac{t^{4}}{4}-\frac{t^{2}}{2} \right )$.
For every $t$, we have that $T_{\sigma(t)}\CC$ is the linear space spanned by $\sigma'(t) = (1, t^2-1, t^3 - t)$. Then, $\operatorname{Tan}(\CC) = \{ (s, s(t^2 - 1), s(t^3-t)) \in \A^3 \mid s, t \in \C\}$.

For $v=(v_1,v_2,v_3) \in \operatorname{Tan}(\CC)$ with $v_2\ne 0$, the fiber $\pi_2^{-1}(v) $ consists of a single point $\sigma(t_0)$ with $t_0= {v_3}/{v_2}$;
therefore, $\omega(\CC)=1$. On the other hand, for $v\in \operatorname{Tan}(\CC)\setminus \{ 0\}$  with $v_2=0$,  we have that $\pi_2^{-1}(v) = \{(1,-\frac{2}{3}, -\frac{1}{4}), (-1,\frac{2}{3}, -\frac{1}{4})\}$.
We conclude that
$ \max_{p \in \CC} \#\{ q \in \CC \mid T_q \CC = T_p \CC \}=2.$
\end{example}

The main result of this section is the following:
\begin{theorem} \label{thm:degTC}
    Let $\CC \subseteq  \mathbb{A}^n$ be a smooth irreducible algebraic curve. Then,
    \begin{equation*}
        \deg(T\CC) = \deg(\CC) + \omega(\CC) \deg(\operatorname{Tan}(\CC)).
    \end{equation*}
\end{theorem}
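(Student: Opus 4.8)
The plan is to dispose of the (trivial) case of a line first, and then, assuming $\CC$ is not a line, to compute $\deg(T\CC)$ as the number of points of $T\CC\cap L$ for a generic affine linear subspace $L\subseteq\A^{2n}$ of codimension $2$ (recall $\dim T\CC=2$). Writing $L=\{\ell_1=\ell_2=0\}$ with $\ell_j(x,y)=a_j\cdot x+b_j\cdot y-c_j$ generic, I would exploit the fibration $\pi_1:T\CC\to\CC$, whose fiber over $p$ is the line $\{(p,\lambda\tau(p)):\lambda\in K\}$, where $\tau(p)$ is a tangent direction at $p$. Restricting $\ell_1,\ell_2$ to such a fiber yields two affine-linear equations in the single unknown $\lambda$, so for a fixed $p$ the system has a (unique) solution precisely when the consistency condition
\[
G(p):=(a_1\cdot p-c_1)(b_2\cdot\tau(p))-(a_2\cdot p-c_2)(b_1\cdot\tau(p))=0
\]
holds. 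Since the first coordinate $p$ already determines the point of $T\CC$, this gives a bijection and reduces the whole problem to counting the zeros of $G$ on $\CC$: $\deg(T\CC)=\#\{p\in\CC: G(p)=0\}$.

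The second step is to recognize the two intrinsic invariants hidden in $G$. The factors $a_j\cdot p-c_j$ are restrictions of affine linear forms, hence cut $\CC$ in $\deg(\CC)$ points (a generic hyperplane section). The factors $b_j\cdot\tau(p)$ are governed by the tangent-direction (Gauss) map $\gamma:\CC\dashrightarrow\PP^{n-1}$, $p\mapsto[\tau(p)]$, with image curve $\Gamma$. Here I would record two identifications: since each $T_p\CC$ is a line through the origin, $\operatorname{Tan}(\CC)$ is exactly the affine cone over $\Gamma$, so that $\deg(\operatorname{Tan}(\CC))=\deg(\Gamma)$; and the generic fiber of $\pi_2:T\CC\to\operatorname{Tan}(\CC)$ over $v=s\,\tau(p)$ is in natural bijection with the $\gamma$-fiber over $[v]$, so that $\omega(\CC)=\deg(\gamma)$.

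For the count I would pass to the smooth projective model $\widetilde{\CC}$ and read $G$ as a nonzero section of $\mathcal L\otimes\mathcal T$, where $\mathcal L$ is the hyperplane bundle (of degree $\deg(\CC)$) carrying the forms $a_j\cdot p-c_j$, and $\mathcal T=\gamma^{\ast}\mathcal O_{\PP^{n-1}}(1)$ carries the forms $b_j\cdot\tau(p)$. As the number of zeros of a nonzero section equals the degree of the line bundle, and $\deg(\mathcal T)=\deg(\gamma)\deg(\Gamma)$, I would obtain
\[
\#\{p\in\CC:G(p)=0\}=\deg(\mathcal L)+\deg(\mathcal T)=\deg(\CC)+\deg(\gamma)\deg(\Gamma)=\deg(\CC)+\omega(\CC)\deg(\operatorname{Tan}(\CC)),
\]
which is the asserted formula.

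The delicate points, and the main obstacle, all concern making this zero-count honest rather than formal. I must choose $a_j,b_j,c_j$ generically so that $G\not\equiv 0$ (possible because $\CC$ is not a line, so $\tau$ is non-constant as a direction, cf.\ Proposition~\ref{prop:tangential_lines}), so that all zeros of $G$ are simple, and --- crucially --- so that none of them lie over the finitely many points of $\widetilde{\CC}$ above infinity or above singular points of $\CC$; only then does the affine cardinality $\#(T\CC\cap L)$ coincide with the projective degree $\deg(\mathcal L\otimes\mathcal T)$. Equally, I must check that the Gauss map extends to a morphism on $\widetilde{\CC}$, so that $\mathcal T$ is genuinely a line bundle with $\deg(\mathcal T)=\deg(\gamma)\deg(\Gamma)$. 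These genericity and extension arguments, rather than the algebra, are where the real work lies; the fiber-cardinality results (Corollary~\ref{coro:fibra} and Proposition~\ref{prop:fibersNormal}) and the estimate of Proposition~\ref{prop:wC-bound} are the tools I would use to control the fibers of $\pi_2$ and hence to pin down $\omega(\CC)=\deg(\gamma)$ cleanly.
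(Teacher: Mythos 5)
Your proposal is correct in substance but follows a genuinely different route from the paper. The paper projectivizes $T\CC$ to $\mathbb{P}^{2n}$, observes (Lemma \ref{asd}) that its ideal is bihomogeneous in $(\overline{\x},\y)$, and invokes van der Waerden's multidegree formula \eqref{eq:vdw} to write $\deg(T\CC)=\deg_{(1,0)}(\overline{T\CC}')+\deg_{(0,1)}(\overline{T\CC}')$; it then shows the first bidegree is $\deg(\CC)$ (a generic hyperplane section of $\CC$ with one tangent direction per point) and the second is $\omega(\CC)\deg(\Tan(\CC))$ (by identifying $\Pi_2(\overline{T\CC}')$ with the projectivized tangential variety and counting fibers). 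You instead cut $T\CC$ by a generic affine codimension-$2$ linear space, eliminate the fiber coordinate $\lambda$ along $\pi_1$ to obtain the single equation $G=v_1u_2-v_2u_1$ on $\CC$, and read $G$ as a section of $\mathcal L\otimes\mathcal T$ on the smooth projective model, with $\mathcal T=\gamma^*\mathcal O_{\mathbb P^{n-1}}(1)$ the pullback along the Gauss map; additivity of line-bundle degrees then yields $\deg(\CC)+\deg(\gamma)\deg(\Gamma)$, and the identifications $\omega(\CC)=\deg(\gamma)$, $\deg(\Tan(\CC))=\deg(\Gamma)$ (cone over the Gauss image) finish the argument. Your elimination of $\lambda$ and the splitting of $G$ into its $\mathcal L$-part and $\mathcal T$-part is in effect a hands-on derivation of the $(1,0)$/$(0,1)$ bidegree decomposition in this special case; what it buys is a more conceptual identification of the two summands as degrees of explicit line bundles, at the cost of importing divisor theory on the normalization and the extension of the Gauss map, plus the genericity verifications you rightly flag. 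One such verification you should add explicitly: at a point $p$ where $u_1(p)=u_2(p)=0$ the consistency condition $G(p)=0$ holds automatically even though the fiber over $p$ may not meet $L$, so your bijection between $T\CC\cap L$ and $\{G=0\}$ requires that no such $p$ occur; this follows because a generic codimension-$2$ linear subspace of $\mathbb P^{n-1}$ misses the curve $\Gamma$ (and is vacuous for $n=2$). With that and the simplicity-of-zeros and behaviour-at-infinity checks carried out, your argument is a valid alternative proof.
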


In order to prove Theorem \ref{thm:degTC}, we will work  with varieties not only in affine spaces but also in projective and multi-projective spaces. We start recalling some definitions and a result from \cite{vdW1978} that will be key to our proof.

\bigskip

Let  $\mathbf{x}$ and $\mathbf{y}$ be two sets of variables. A polynomial in $K[\mathbf{x}, \mathbf{y}]$ is said to be \emph{bihomogeneous} in $(\mathbf{x}, \mathbf{y})$ if it is homogeneous both in the variables $\mathbf{x}$ and in the variables $\mathbf{y}$. A \emph{bihomogeneous ideal} of $K[\mathbf{x}, \mathbf{y}]$ is an ideal that can be generated by bihomogeneous polynomials in $(\mathbf{x},\mathbf{y})$.

\begin{lemma}\label{asd}
    Let $X \subseteq  \mathbb{A}^{n+m}$ be an algebraic variety defined by polynomials in $K[\x,\y]$ with $\x=(x_1,\dots, x_n)$ and  $\mathbf{y}=(y_1,\dots, y_m)$ such that, for every $(x,y) \in X$, we have that $(x,\lambda y) \in X$ for all $\lambda \in K$. Let $\overline{X} \subseteq  \mathbb{P}^{n+m}$ be the projective closure of $X$. Then, the ideal $I(\overline{X}) \subseteq  K[\overline{\mathbf{x}},\mathbf{y}]$ is a bihomogeneous ideal in $\overline{\mathbf{x}}= (x_0,x_1,\dots, x_n)$ and $\mathbf{y}$.
\end{lemma}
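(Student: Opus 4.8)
The plan is to exploit the scaling symmetry in the $\mathbf{y}$-variables and transfer it to the projective closure. The hypothesis says precisely that $X$ is invariant under the affine maps $\sigma_\lambda\colon(\mathbf{x},\mathbf{y})\mapsto(\mathbf{x},\lambda\mathbf{y})$ for every $\lambda\in K$. For $\lambda\ne 0$, each $\sigma_\lambda$ is the restriction to the affine chart $\{x_0\ne 0\}$ of the linear automorphism $\phi_\lambda\colon[x_0:\mathbf{x}:\mathbf{y}]\mapsto[x_0:\mathbf{x}:\lambda\mathbf{y}]$ of $\mathbb{P}^{n+m}$. First I would record that $\phi_\lambda$ is a Zariski homeomorphism of $\mathbb{P}^{n+m}$ fixing the coordinate $x_0$, hence preserving both the affine chart and the hyperplane at infinity; from $\phi_\lambda(X)=X$ I then get $\phi_\lambda(\overline{X})=\overline{\phi_\lambda(X)}=\overline{X}$, so that $\overline{X}$ is invariant under the whole group $\{\phi_\lambda\colon\lambda\in K^*\}$.

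The second step turns this geometric invariance into an algebraic statement. Since $\overline{X}$ is a projective variety, $I(\overline{X})$ is already homogeneous for the total degree in $(\overline{\mathbf{x}},\mathbf{y})$; thus it suffices to treat a totally homogeneous $f\in I(\overline{X})$ and show that all its bihomogeneous components lie in $I(\overline{X})$, which is exactly the criterion for $I(\overline{X})$ to be a bihomogeneous ideal. Writing $f=\sum_k f_k$, where $f_k$ collects the monomials of $\mathbf{y}$-degree exactly $k$, total homogeneity forces each $f_k$ to have $\overline{\mathbf{x}}$-degree $\deg f-k$, so the $f_k$ are precisely the bihomogeneous components of $f$.

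To prove $f_k\in I(\overline{X})$, I would fix an arbitrary point $P=(a_0:\mathbf{a}:\mathbf{b})\in\overline{X}$ and evaluate $f$ along its orbit. Because $\phi_\lambda(P)=(a_0:\mathbf{a}:\lambda\mathbf{b})\in\overline{X}$ and $f$ vanishes on $\overline{X}$, the one-variable polynomial $\lambda\mapsto f(a_0,\mathbf{a},\lambda\mathbf{b})=\sum_k\lambda^k f_k(a_0,\mathbf{a},\mathbf{b})$ vanishes for all $\lambda\in K^*$; as $K$ is infinite, all its coefficients vanish, that is $f_k(a_0,\mathbf{a},\mathbf{b})=0$ for every $k$. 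Since $P\in\overline{X}$ was arbitrary, each $f_k$ vanishes on $\overline{X}$, hence $f_k\in I(\overline{X})$, and the argument is complete.

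The delicate point I anticipate is the identity $\phi_\lambda(\overline{X})=\overline{X}$: what legitimizes commuting the map with the Zariski closure is that $\sigma_\lambda$ extends to a genuine automorphism of the ambient $\mathbb{P}^{n+m}$ (not merely a rational map), and this holds precisely because $\phi_\lambda$ fixes $x_0$. I would also note that the value $\lambda=0$ permitted by the hypothesis is neither needed nor usable here, since $\phi_0$ is not an automorphism; only $\lambda\in K^*$ is invoked.
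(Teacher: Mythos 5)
Your proof is correct and takes essentially the same approach as the paper's: decompose a (totally) homogeneous $f\in I(\overline{X})$ into its bihomogeneous components by $\mathbf{y}$-degree and use the vanishing of $\sum_k\lambda^k f_k$ for infinitely many $\lambda$ to conclude each component vanishes. The only difference is bookkeeping: the paper evaluates at affine points $(1:x:\lambda y)$ with $(x,y)\in X$ and then homogenizes the resulting elements of $I(X)$ back up to degree $\deg f$, whereas you first extend the scaling to a linear automorphism of $\mathbb{P}^{n+m}$ preserving $\overline{X}$ and evaluate directly at arbitrary points of $\overline{X}$; both routes are valid.
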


\begin{proof}
Let $f \in I(\overline{X})$ be a homogeneous polynomial of degree $D$. Then, we may write
\[f(\overline{\x}, \y)= \sum_{k=0}^D \Big(\sum_{|\alpha|=k} c_{\alpha}(\overline{\x}) \y^\alpha \Big)\]
where $c_{\alpha}(\overline{\x})\in K[\overline{\x}]$ is homogeneous of degree $D-k$ for every $\alpha=(\alpha_1,\dots, \alpha_m)\in (\Z_{\ge 0})^m$ with $|\alpha|=\sum_i \alpha_i = k$.

For a point $(x,y) \in X$, by the assumption on $X$,  we have that $(1: x: \lambda y)\in \overline{X}$ for all $\lambda \in K$ and so,
\[f(1,x, \lambda y)= \sum_{k=0}^D \Big(\sum_{|\alpha|=k} c_{\alpha}(1,x) y^\alpha \Big) \lambda^k\]
vanishes for all $\lambda \in K$. This implies that $\sum_{|\alpha|=k} c_{\alpha}(1,x) y^\alpha =0$ for $k=0,\dots, D$. We conclude that $\sum_{|\alpha|=k} c_{\alpha}(1,\x) \y^\alpha\in I(X)$ and, homogenizing up to degree $D$, we obtain that $f^{(k)}:=\sum_{|\alpha|=k} c_{\alpha}(\overline{\x}) \y^\alpha\in I(\overline{X})$,  for $k=0,\dots, D$.
Therefore,  $f= \sum_{k=0}^D f^{(k)}$ with $f^{(k)} \in I(\overline{X})$ bihomogeneous in $(\overline{\x},\y)$.

Thus, if $f_1,\dots, f_r\in I(\overline{X})$ are homogeneous polynomials that generate $I(\overline{X})$, the bihomogeneous polynomials $f_i^{(k)} \in K[\overline{\x}, \y]$ defined as before, for $i=i,\dots, r$, also generate $I(\overline{X})$.
\end{proof}

\bigskip

Let $X \subseteq  \mathbb{P}^{n+m+1}$ be an equidimensional variety defined by homogeneous polynomials in $K[\mathbf{x}, \mathbf{y}]$, where $\mathbf{x}$ and $\mathbf{y}$ are sets of $n+1$ and $m+1$ variables respectively. Assume that the polynomials defining $X$ are bihomogeneous in $(\mathbf{x}, \mathbf{y})$. Then, they define a variety $X'$ in $\mathbb{P}^n \times \mathbb{P}^m$. Note that, if $\dim(X) = d$, then $X'$ is equidimensional of dimension $d-1$.

We may associate to $X'\subseteq  \mathbb{P}^n \times \mathbb{P}^m$ a family of bidegrees: for $(a,b) \in (\Z_{\ge 0})^2$ such that $a+b= d-1$,
the intersection of $X'$ with a linear variety of the form \[\{ (x , y) \in \mathbb{P}^n \times \mathbb{P}^m \mid L^n_i(x) = 0, \ 1 \le i \le a; L^m_j(y) = 0, \ 1\le j \le b\},\] where $L_i^n(\x)$ and $L_j^m(\y)$ are generic linear forms in $n+1$ and $m+1$ variables respectively, is a finite set. The number of points in this set is called
the \emph{$(a,b)$-bidegree of $X'$}, and will be denoted by $\deg_{(a,b)}(X')$.
According to the main result in \cite{vdW1978}, the bidegrees of $X'$ are related to the degree of $X$ as follows:

\begin{equation} \label{eq:vdw}
        \deg(X) = \sum_{a+b=d-1} \deg_{(a,b)}(X'). 
\end{equation}

We go from the affine space $\A^n$ to the projective space $\mathbb{P}^{n-1}$ by means of the usual map $\psi:\A^n \setminus \{0\} \to \mathbb{P}^{n-1}$, $\psi(y_1,\dots, y_n) = (y_1: \dots: y_n)$. For an algebraic set $X\subseteq \mathbb{P}^{n-1}$, we may consider the \emph{affine cone over $X$} in $\A^n$ defined as $CX = \{ 0\}\cup\{ y\in \A^n\setminus\{0\} \mid \psi(y) \in X\} $. It is easy to see that if $X$ is an irreducible projective variety, then $\deg(X) = \deg(CX)$.

\bigskip
We are now ready to prove Theorem \ref{thm:degTC}.

\bigskip

\begin{proof}[Proof of Theorem \ref{thm:degTC} (Theorem A in the Introduction)]  First note that the formula holds trivially if $\CC$ is a line, since in this case $\omega(\CC) =0$ by definition.
Assume now that the curve $\CC$ is not a line.

Consider $T\CC\subseteq \A^{2n}$ and its projective closure $\overline{T\CC}\subseteq \mathbb{P}^{2n}$. Note that $\overline{T\CC}$ is an irreducible projective variety of dimension $2$ and, by Lemma \ref{asd},  its defining ideal $I(\overline{T\CC})\subseteq  K[\overline{\x},\y]$ is bihomogeneous. Hence, applying Identity \eqref{eq:vdw}, we have that
\begin{equation*}
    \deg(T\CC) = \deg(\overline{T\CC}) = \deg_{(1,0)}(\overline{T\CC}') + \deg_{(0,1)}(\overline{T\CC}').
\end{equation*}
To prove the identity in our statement, we will show that
\[\deg_{(1,0)}(\overline{T\CC}') = \deg(\CC) \ \hbox{ and } \ \deg_{(0,1)}(\overline{T\CC}') = \omega(\CC) \deg(\operatorname{Tan}(\CC)).\]
As we will see, the computation of these bidegrees amounts to computing the degree of the projection of $\overline{T\CC}'\subseteq  \mathbb{P}^n \times \mathbb{P}^{n-1}$ to $\mathbb{P}^n$ and $\mathbb{P}^{n-1}$ respectively, and adding up the cardinalities of all the fibers over each of the finite set of points in the corresponding projection.

By definition, the first bidegree equals the number of points in the intersection
\begin{equation}\label{eq:deg10}
    \overline{T\CC}' \cap \{ (\overline{x},y) \in \mathbb{P}^{n} \times \mathbb{P}^{n-1} \mid L_1(\overline{x}) = 0 \}
\end{equation}
for a generic linear form $L_1(\overline{\x})$. Since $\overline{T\CC}' \cap \{x_0 = 0\} = (\overline{T\CC} \cap \{x_0 = 0\})'$ and the variety $\overline{T\CC} \cap \{x_0 = 0\}\subseteq \mathbb{P}^{2n}$ is equidimensional of dimension $1$, we have that $\overline{T\CC}' \cap \{x_0 = 0\}\subseteq \mathbb{P}^n \times \mathbb{P}^{n-1}$ is a finite set of points. We may assume that the generic linear form $L_1$ does not vanish at any of these points and then, the intersection \eqref{eq:deg10} is contained in the open set $\{ x_0 \ne 0 \}\subseteq \mathbb{P}^n \times \mathbb{P}^{n-1}$.

For a point $(\overline{p}, v) \in \overline{T\CC}' \cap \{x_0 \ne 0\}$ we can take coordinates of the form $((1: p), v)$. It follows that $(1:p:v)\in \overline{T\CC} \cap \{ x_0\ne 0\} $ and so, $(p,v)\in T\CC$. Note that the fact that $T\CC$ is defined by equations that are homogeneous in the variables $\y$ implies that this will happen independently of the coordinates we choose to represent $v\in \mathbb{P}^{n-1}$. Conversely, for every point $(p,v) \in T\CC$ with $v\ne 0$, we have that $(1:p:\lambda v)\in \mathbb{P}^{2n}$ satisfies the defining equations of $\overline{T\CC}$ for every $\lambda \in K$ and, therefore, $((1:p), v) \in \mathbb{P}^n \times \mathbb{P}^{n-1}$ lies in $\overline{T\CC}' \cap \{x_0 \ne 0\}$.
Therefore, if $\ell_1(\x):=L_1(1,\x)$ and $f_1,\dots, f_r \in K[\x]$ are generators of the ideal $I(\CC)$, under the previous genericity assumption, it follows that the intersection \eqref{eq:deg10} equals
\[\Bigg\{((1:x),y) \in \mathbb{P}^n \times \mathbb{P}^{n-1} \mid \left\{\begin{array}{l} f_1(x) = 0, \dots, f_r(x) = 0, \ell_1(x)=0, \\ \nabla f_1(x) \cdot y = 0, \dots, \nabla f_r(x) \cdot y =0 \end{array}\right.\Bigg\}.\]

The system $f_1(x) = 0, \dots, f_r(x) = 0, \ell_1(x)=0$ defines $\CC \cap \{ \ell_1(x)=0\}$; then, it has $\deg(\CC)$ solutions for a generic $\ell_1$. In addition, for each solution $p$ to this system, the equations $\nabla f_1(p) \cdot y = 0, \dots, \nabla f_r(p) \cdot y =0$ define $T_{p}\CC$ and so, since $\CC$ is smooth, they have a unique solution in $\mathbb{P}^{n-1}$. It follows that $\deg_{(1,0)}(\overline{T\CC}') = \deg(\CC)$.

Now, to prove the formula for $\deg_{(0,1)}(\overline{T\CC}')$, we will count the number of points in the intersection
\begin{equation*}
    \overline{T\CC}' \cap \{ (\overline{x},y) \in \mathbb{P}^{n} \times \mathbb{P}^{n-1} \mid L_2(y) = 0 \}
\end{equation*}
for a generic linear form $L_2(\y)$. Let $\Pi_2: \overline{T\CC}' \to \mathbb{P}^{n-1}$ be the projection to the second factor, $\Pi_2(\overline{x}, y) = y$. Then,
\begin{equation}\label{eq:union}
\overline{T\CC}' \cap \{ (\overline{x},y) \in \mathbb{P}^{n} \times \mathbb{P}^{n-1} \mid L_2(y) = 0 \} = \bigcup_{v\in \Pi_2(\overline{T\CC}') \cap \{L_2(y)=0\}} \Pi_2^{-1}(v).
\end{equation}

We start establishing a relation between $\Pi_2(\overline{T\CC}')$ and $\Tan(\CC)$.

Consider the map $\psi:\A^n\setminus \{0\}\to \mathbb{P}^{n-1}$,  $\psi(y_1, \dots, y_n) = (y_1:\dots: y_n)$, and let $\PTan(\CC):= \psi(\Tan(\CC) \setminus \{0\})$. Note that $\pi_2(T\CC)$ satisfies that, if $v\in\pi_2(T\CC)$ then $\lambda v\in \pi_2(T\CC)$ for every $\lambda \in K$; then, $I(\Tan(\CC))= I(\pi_2(T\CC))$ is a homogeneous ideal in $K[\overline{\y}]$ and so, $\PTan(\CC)$ is the projective variety defined in $\mathbb{P}^{n-1}$ by this ideal. In particular, we have that $\dim(\PTan(\CC)) =1$ and $\Tan(\CC)$ is the affine cone over $\PTan(\CC)$.

We claim that $\Pi_2(\overline{T\CC}')= \PTan(\CC)$. To prove this equality, let us observe that
\begin{equation}\label{eq:projpi2}
\psi(\pi_2(T\CC)\setminus \{0\})
= \{ y\in \mathbb{P}^{n-1} \mid \exists x \in K^n: f(x,y)= 0 \ \forall f\in I(T\CC)\}
\end{equation}
and, recalling that $\overline{T\CC}'$ is defined in $\mathbb{P}^n \times \mathbb{P}^{n-1}$ by the same polynomials as the projective closure $\overline{T\CC}$ in $\mathbb{P}^{2n}$,
\begin{equation}\label{eq:Pi2}
\Pi_2(\overline{T\CC}') 
= \{ y\in \mathbb{P}^{n-1} \mid \exists (x_0,x) \in K^{n+1}\setminus \{ 0\} : f^h(x_0,x,y)= 0 \ \forall f\in I(T\CC)\}.
\end{equation}
Here, we write $f^h\in K[\overline{\x},\y]$ for the homogenization with  $x_0$ of a polynomial $f\in K[\x, \y]$.

It is straightforward to see from \eqref{eq:projpi2} and \eqref{eq:Pi2} that $\psi(\pi_2(T\CC)\setminus \{0\})\subseteq  \Pi_2(\overline{T\CC}')$.
On the other hand, we have that $\Pi_2(\overline{T\CC}' \cap \{ x_0 \ne 0\})$ is a dense open subset of the closed set $\Pi_2(\overline{T\CC}')$ (since $\overline{T\CC}'$ is not contained in the hyperplane $\{ x_0 = 0\}$ in $\mathbb{P}^n \times \mathbb{P}^{n-1}$) which is included in $\psi(\pi_2(T\CC)\setminus \{0\})$. 
It follows that $\Pi_2(\overline{T\CC}') = \overline{\psi(\pi_2(T\CC)\setminus \{0\})} = \PTan(\CC)$.

Therefore, for a generic linear form $L_2\in K[\y]$,
\[\#  (\Pi_2(\overline{T\CC}') \cap \{L_2(y)=0\}) = \deg (\PTan(\CC)) = \deg(\Tan(\CC)), \]
where the last equality holds due to the fact that $\Tan(\CC)$ is the affine cone over $\PTan(\CC)$.

To finish the proof, we will show that, for a generic linear form $L_2(\y)$, for every $v\in \Pi_2(\overline{T\CC}') \cap \{L_2(y)=0\}$, the cardinality of $\Pi_2^{-1}(v)$ equals $\omega(\CC)$.

Let $\mathcal{U}\subset\A^n$ be an open subset of $\Tan(\CC)$ such that $\# \pi_2^{-1}(v) = \omega(\CC)$ for all $v\in \mathcal{U}$. In particular, $0\notin \mathcal{U}$ and, for every $v\in \mathcal{U}$, $\pi_2^{-1}(v) = \pi_2^{-1}(\lambda v)$ for all $\lambda \in K\setminus\{0\}$. Since $\Tan(\CC)$ is defined by homogeneous polynomials, we have that $\lambda \mathcal{U} :=\{ \lambda y: y\in \mathcal{U}\}$ is an open subset of $\Tan(\CC)$ for every $\lambda \in  K\setminus\{0\}$. Then, $\widetilde{\mathcal{U}}:= \bigcup_{\lambda \in K\setminus\{0\}} \lambda \mathcal{U}$ is an open subset of $\Tan(\CC)$ satisfying the same condition as $\mathcal{U}$ on the fibers of $\pi_2$ and, furthermore, $\psi(\widetilde{\mathcal{U}})$ is a dense open subset of $\PTan(\CC)$.
Let $\mathcal{V}:= \Pi_2(\overline{T\CC}') \setminus \Pi_2(\overline{T\CC}' \cap \{x_0=0\})$, which is also a dense open subset of $\Pi_2(\overline{T\CC}')=\PTan(\CC)$, since $\overline{T\CC}' \cap \{x_0=0\}$ is a finite set. Finally, consider $\mathcal{G}:=  \psi(\widetilde{\mathcal{U}}) \cap \mathcal{V}$.

For generic coefficients, a linear form $L_2\in K[\y]$ satisfies $\#(\PTan(\CC) \cap \{ L_2(y)=0\}) = \deg(\PTan(\CC))$ and $(\PTan(\CC)\setminus \mathcal{G}) \cap \{L_2(y)=0\} = \emptyset$.
Therefore, $\Pi_2(\overline{T\CC}') \cap \{L_2(y)=0\}$ consists of $\deg(\Tan(\CC))$ points contained in $\mathcal{G}$.  Now, for  $v\in \mathcal{G}$, we have that $\# \pi_2^{-1}(v) = \omega(\CC)$. In addition,
$\Pi_2^{-1}(v) \subseteq \{ x_0 \ne 0\}$ and, as a consequence, for each of its points $(\overline{p},v)$ we may assume $\overline{p}=(1:p)$. Hence, $(p,v) \in T\CC$ and so, $(p,v) \in \pi_2^{-1}(y)$. Also, it is straightforward to see that a point $(p,v) \in \pi_2^{-1}(v)$ leads to a point $(\overline{p}, v) \in \Pi_2^{-1}(v)$. Therefore, $\# \Pi_2^{-1}(v) = \# \pi_2^{-1}(v) = \omega(\CC)$.

The equality $\deg_{(0,1)}(\overline{T\CC}') = \omega(\CC) \deg(\operatorname{Tan}(\CC))$ follows from \eqref{eq:union}.
\end{proof}

\subsection{Parametric algebraic curves}\label{subsection: parametric algebraic curves}

In this subsection, we will deal with the case of smooth curves given by polynomial or rational parametric representations. We will show that, under certain assumptions on the parametrization, the tangent bundle of the curve also has a parametric representation, and we will relate the degrees of the curve and its tangent bundle by means of the degrees of the polynomials involved in their parametric representations.

Let $\CC\subseteq \A^n$ be an algebraic curve.
We say that $\CC$ has a \emph{polynomial} (resp.~\emph{rational}) \emph{parametric representation} if there exist  polynomials $g_1,\dots, g_n \in K[t]$ (resp.~rational functions $g_1/h_1, \dots, g_n/h_n\in K(t)$) defining a dominant map $\mathcal{P}\colon \A^1 \to \CC$. Note that a curve with a polynomial or rational parametric representation is irreducible.

We will focus on smooth curves possessing a parametric representation $\mathcal{P}$ that satisfies, additionally:
\begin{itemize}
\item[(P1)] there is a non-empty open subset $\mathcal{U}\subseteq \A^1$ such that $\mathcal{P}:\mathcal{U} \to \CC$ is injective (that is, $\mathcal{P}$ is a \emph{proper} parametrization);
\item[(P2)] for every $t\in \mathcal{U}$, the derivative $\mathcal{P}'(t)$ does not vanish.
\end{itemize}

From a parametric representation $\mathcal{P}(t) = (g_1(t)/h_1(t), \dots, g_n(t)/h_n(t))$ of an algebraic curve $\CC$, we can obtain a parametric representation with a common denominator. Assume $\gcd(g_i, h_i) = 1$ for $i=1,\dots, n$. Let $g_0\in K[t]$ be the least common multiple of  $h_1,\dots, h_n$ and, for $i=1,\dots, n$, set $\widetilde{g}_i := g_i g_0/h_i\in K[t]$. Then, $\widetilde{\mathcal{P}}(t)= (\widetilde{g}_1(t)/g_0(t), \dots, \widetilde{g}_n(t)/g_0(t))$ is a parametric representation of $\CC$ with $\gcd(g_0, \widetilde{g}_1,\dots, \widetilde{g}_n)=1$. Moreover, if $\mathcal{P}$ satisfies (P1) (resp.~(P2)), then $\widetilde{\mathcal{P}}(t)$ also does.

\bigskip

We start proving a basic result on the geometric degree of curves given by parametric representations.
Upper bounds for the degree of parameterized varieties of arbitrary dimension can be found, for instance, in \cite{DHM2022} and the references therein.

\begin{proposition}\label{prop:degCparam}
Let $\CC\subseteq \A^n$ be an algebraic curve with a parametric representation $\mathcal{P}$ satisfying condition (P1). If $\mathcal{P}(t) = (g_1(t)/g_0(t),\dots, g_n(t)/g_0(t))$ with $g_0,g_1,\dots, g_n \in K[t]$ and $\gcd(g_0,g_1, \dots, g_n) =1$, then
$\deg(\CC) = \max\{\deg(g_i); 0\le i \le n\}$.
\end{proposition}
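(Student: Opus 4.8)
The plan is to compute $\deg(\CC)$ as the number of points in $\CC \cap H$ for a generic affine hyperplane $H=\{a_0+a_1x_1+\cdots+a_nx_n=0\}$, with $a=(a_0,\dots,a_n)$, and to translate this count into the number of roots of a single univariate polynomial built from the parametrization. Clearing the denominator $g_0$, the point $\mathcal{P}(t)=(g_1(t)/g_0(t),\dots,g_n(t)/g_0(t))$ lies on $H$ exactly when $P_a(t):=a_0g_0(t)+a_1g_1(t)+\cdots+a_ng_n(t)$ vanishes. Set $\delta:=\max_{0\le i\le n}\deg(g_i)$. First I would observe that the coefficient of $t^{\delta}$ in $P_a$ is a nonzero linear form in the $a_i$ (built from the leading coefficients of those $g_i$ of maximal degree), so that $\deg(P_a)=\delta$ for generic $a$; generic choices of $a$ correspond precisely to generic hyperplanes $H$.

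The second step is to set up, for generic $a$, a bijection between the roots of $P_a$ and the points of $\CC\cap H$. Let $\mathcal{U}\subseteq\A^1$ be the dense open set of condition (P1) on which $\mathcal{P}$ is injective, shrunk so that $g_0$ does not vanish on it. Because $\gcd(g_0,\dots,g_n)=1$, the vector $u(t_0):=(g_0(t_0),\dots,g_n(t_0))$ is nonzero for every $t_0$; hence for each point $t_0$ of the finite set $Z:=\A^1\setminus\mathcal{U}$ the condition $P_a(t_0)=0$ is a nontrivial linear constraint on $a$, and avoiding this finite union of hyperplanes ensures that, for generic $a$, all roots of $P_a$ lie in $\mathcal{U}$. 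Since $\CC\setminus\mathcal{P}(\mathcal{U})$ is finite (the image of a dense open set under a dominant map to an irreducible curve is constructible and dense, hence contains a dense open subset with finite complement), a generic $H$ also misses these finitely many points, so $\CC\cap H\subseteq\mathcal{P}(\mathcal{U})$. Then $t_0\mapsto\mathcal{P}(t_0)$ sends the roots of $P_a$ into $\CC\cap H$ (using $P_a(t_0)/g_0(t_0)=0$), it is injective by (P1), and it is surjective by the previous remark; therefore $\#(\CC\cap H)=\#\{\text{roots of }P_a\}$. This already yields $\deg(\CC)\le\delta$.

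The main point, and the step I expect to be the crux, is to show that for generic $a$ the polynomial $P_a$ is squarefree, so that it has exactly $\delta$ distinct roots. A repeated root of $P_a$ at $t_0$ means $P_a(t_0)=P_a'(t_0)=0$, i.e.\ $a$ is orthogonal to both $u(t_0)$ and $u'(t_0):=(g_0'(t_0),\dots,g_n'(t_0))$. I would prove that $u(t)$ and $u'(t)$ are linearly independent for generic $t$: otherwise, since $u(t)\ne 0$ for all $t$, the projective point $(g_0(t):\cdots:g_n(t))$ would be constant, forcing $g_i=c_i\,\rho(t)$ with $\rho\in K[t]$; then $\gcd(g_0,\dots,g_n)=1$ would make $\rho$ constant and $\mathcal{P}$ a constant map, contradicting that $\CC$ is a curve. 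Consequently, on the incidence set $B:=\{(a,t)\mid P_a(t)=P_a'(t)=0\}\subseteq\A^{n+1}\times\A^1$, the fibre over a generic $t$ is cut out by two independent linear equations and has dimension $n-1$, while the finitely many special fibres have dimension at most $n$; hence every component of $B$ has dimension at most $n$, so $\dim(B)\le n<n+1$ and the projection of $B$ to the $a$-space $\A^{n+1}$ is not dominant. Thus for generic $a$ the polynomial $P_a$ has no repeated root.

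Putting these together, for generic $a$ the polynomial $P_a$ has exactly $\delta$ distinct roots, all lying in $\mathcal{U}$, and these are in bijection with $\CC\cap H$; therefore $\deg(\CC)=\#(\CC\cap H)=\delta=\max_{0\le i\le n}\deg(g_i)$, as claimed. Note that only condition (P1) and the fact that $\CC$ is genuinely one-dimensional are used, condition (P2) playing no role here.
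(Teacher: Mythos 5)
Your proof is correct, and its overall architecture coincides with the paper's: intersect $\CC$ with a generic hyperplane $H_a$, use (P1) and the finiteness of $\A^1\setminus\mathcal U$ and of $\CC\setminus\mathcal P(\mathcal U)$ to put $\CC\cap H_a$ in bijection with the roots of $P_a=a_0g_0+\cdots+a_ng_n$, and observe that the leading coefficient of $P_a$ is a nonzero linear form in $a$, so $\deg P_a=\delta$ generically. Where you diverge is in the crux step that $P_a$ is squarefree for generic $a$. The paper argues algebraically: regarding $G_a:=P_a$ as an element of $K[t,a]$, it is of degree $1$ in $a$ with coefficients $g_0,\dots,g_n$ relatively prime, hence irreducible; since $\deg_t G_a'<\deg_t G_a$, the two have no common factor of positive degree in $t$, so $\operatorname{Res}_t(G_a,G_a')\in K[a]$ is not the zero polynomial. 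You instead run an incidence-variety dimension count on $B=\{(a,t)\mid P_a(t)=P_a'(t)=0\}$, reducing the matter to the generic linear independence of $(g_0(t),\dots,g_n(t))$ and its derivative, which you correctly derive from $\gcd(g_0,\dots,g_n)=1$ together with the non-constancy of $\mathcal P$. Both arguments use the gcd hypothesis at exactly this point; the resultant argument is shorter and purely algebraic, while your fibre-dimension argument is more geometric and makes visible where the hypothesis that $\CC$ is genuinely a curve enters. You are also more explicit than the paper about the genericity conditions ensuring that all roots of $P_a$ lie in $\mathcal U$ and that $\CC\cap H_a\subseteq\mathcal P(\mathcal U)$, which the paper asserts with less detail. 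Your closing remark that (P2) is not needed agrees with the statement, which only assumes (P1).
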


\begin{proof} To compute the degree of $\CC$, we consider its intersection with a hyperplane $H_a:=\{ x\in \A^n\mid a_0+a_1 x_1+\cdots +a_n x_n=0\}$ defined from $a=(a_0,a_1,\dots, a_n)\in\A^{n+1}$.

Assume $\mathcal{P} :\mathcal{U} \to \CC$ is injective for a non-empty Zariski open set $\mathcal{U} \subseteq \A^1$. Since the image of $\mathcal{P}$ contains a Zariski dense open subset of $\CC$, for generic $a\in \A^{n+1}$, the intersection $\CC \cap H_a$ is included in the image of $\mathcal{P}$. Then,
\[\CC \cap H_a =\{ x\in \A^n\mid \exists t\in \mathcal{U} : x= \mathcal{P}(t) \hbox { and } a_0 g_0(t)+a_1 g_1(t) +\cdots+ a_ng_n(t) =0\}.\] The injectivity of $\mathcal{P}$ implies that this set is in one to one correspondence with the set of roots in $\mathcal{U}$ of the polynomial $G_a:=a_0 g_0+a_1 g_1 +\cdots+ a_ng_n \in K[t]$.
Note that, for generic $a\in \A^{n+1}$, all the roots of $G_a$ lie in $\mathcal{U}$.

For $a\in \A^{n+1}$ with $G_a\ne 0$, we have that $\deg(G_a) \le \delta(\mathcal{P}):= \max\{\deg(g_i); 0\le i \le n\}$. Moreover, if $I:=\{ 0\le i \le n \mid \deg(g_i) = \delta(\mathcal{P})\}$, the equality holds for every $a\in \A^{n+1}$ such that $\sum_{i\in I}  \text{lc}(g_i)\cdot a_i \ne 0$. Consider the resultant $R(a):= \text{Res}_t(G_a, G'_a)\in K[a]$, regarding $G_a$ as a polynomial in the variable $t$ of degree $\delta(\mathcal{P})$. To finish the proof, it suffices to show that $R(a)$ is not the zero polynomial, since this implies that, for generic $a\in \A^{n+1}$, the polynomial $G_a$ has $\delta(\mathcal{P})$ distinct roots.

Since the degree of $G_a$ in the variables $a=(a_0,\dots, a_n)$ is $1$ and its coefficients $g_0(t), \dots, g_n(t)$ (regarded as a polynomial in $K[t][a]$) are relatively prime by our assumptions, it follows that $G_a$ is irreducible in $K[t,a]$. In addition, as $\deg_t(G'_a)< \deg_t(G_a)$, then $G_a$ does not divide $G'_a$ and so, $G_a$ and $G'_a$ do not have a common factor in $K[t,a]$ of positive degree in $t$. We conclude that the resultant $\text{Res}_t(G_a, G'_a)\in K[a]$ is not zero.
\end{proof}

\bigskip

We will now analyze the tangent bundle of a smooth curve with a parametric representation.
In a similar way as for curves, we can define the notion of polynomial or rational parametric representation for surfaces and higher dimensional irreducible varieties. Here, we will work with parametric representations of the tangent bundle of a curve.

\begin{lemma}\label{lem:paramTC}
Let $\CC\subseteq \A^n$ be a smooth irreducible curve with a (polynomial or rational) parametric representation $\CP: \A^1\to \CC$ satisfying the conditions (P1) and (P2). Then, $T\CC$ has a parametric representation given by $\widehat{\CP}: \A^2 \to T\CC$, $\widehat{\CP}(t,s) = (\CP(t), s \CP'(t))$.
\end{lemma}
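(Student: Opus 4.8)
The plan is to verify that the proposed map $\widehat{\CP}(t,s) = (\CP(t), s\,\CP'(t))$ is well-defined into $T\CC$ and dominant, since those are precisely the two conditions required for a parametric representation of $T\CC$. First I would check that the image lands in $T\CC$: for a generic $t$ (in the open set $\mathcal{U}$ from condition (P1)), the point $\CP(t)$ lies in $\CC$ and, by condition (P2), $\CP'(t)\ne 0$. Since $\CC$ is a smooth curve, its tangent space $T_{\CP(t)}\CC$ is one-dimensional, and the velocity vector $\CP'(t)$ is a nonzero tangent vector there (this follows from differentiating each defining equation $f_i(\CP(t))=0$ with respect to $t$, which yields $\nabla f_i(\CP(t))\cdot \CP'(t)=0$, exactly the defining relations of $T_{\CP(t)}\CC$ from \eqref{eq:defTV}). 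Hence $\CP'(t)$ spans $T_{\CP(t)}\CC$, so every scalar multiple $s\,\CP'(t)$ lies in $T_{\CP(t)}\CC$, and thus $(\CP(t), s\,\CP'(t))\in T\CC$ for all $(t,s)$ in a dense open set; by continuity of the polynomial/rational map and closedness of $T\CC$, the image of the whole map is contained in $T\CC$.

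Next I would establish dominance. Here the key is a dimension count combined with surjectivity onto the generic fiber of $\pi_1:T\CC\to\CC$. Since $\CP$ is dominant onto $\CC$, the first coordinate of $\widehat{\CP}$ sweeps out a dense subset of $\CC$. Over a generic point $p=\CP(t)\in\CC$, the fiber $\pi_1^{-1}(p)$ is the one-dimensional linear space $T_p\CC = K\cdot\CP'(t)$, and as $s$ ranges over $\A^1$ the vectors $s\,\CP'(t)$ range over all of this line (because $\CP'(t)\ne 0$). Therefore the image of $\widehat{\CP}$ meets every fiber of $\pi_1$ over a dense set of base points in its full dimension. Since $\dim(T\CC)=2$ by Proposition \ref{prop:dimTV} and $\widehat{\CP}$ is defined on $\A^2$, a standard argument via the Theorem on the dimension of fibers shows the image is dense: the closure of the image is an irreducible subvariety of the irreducible $2$-dimensional variety $T\CC$ that dominates $\CC$ with $1$-dimensional generic fibers, hence has dimension $2$ and equals $T\CC$.

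The main obstacle, and the point deserving the most care, is ensuring that the two properties (P1) and (P2) are genuinely used and that the argument does not secretly require more. Condition (P2) is essential precisely at the step where I claim $\CP'(t)$ spans the tangent line: without it, $\CP'(t)$ could vanish and the map $s\mapsto s\,\CP'(t)$ would collapse to the zero vector, failing to cover the fiber. Condition (P1), properness, guarantees $\CP$ is generically injective and hence that the base map is dominant with the expected generic behavior, preventing degeneracies in the fiber structure. I would also note that in the rational case one restricts to the open set where the common denominator $g_0$ does not vanish, so that both $\CP(t)$ and $\CP'(t)$ are regular there; this is harmless since we only need dominance. With these verifications in place, $\widehat{\CP}$ is a dominant morphism from $\A^2$ to the irreducible variety $T\CC$, which is exactly the assertion that $\widehat{\CP}$ is a parametric representation of $T\CC$.
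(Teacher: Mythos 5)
Your argument is correct and is essentially the paper's proof written out in full: the paper simply observes that $\widehat{\CP}$ is the differential $D\CP$ of $\CP$, so the chain-rule computation you perform is exactly why the image lands in $T\CC$, and the dominance check is left implicit as ``straightforward.'' One minor quibble: dominance of $\CP$ onto $\CC$ is part of the definition of a parametric representation rather than a consequence of (P1), so (P1) is not actually needed for this lemma (it is used later for injectivity of $\widehat{\CP}$).
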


\begin{proof} It follows straightforwardly noticing that $\widehat{\CP}$ is the differential of $\CP$.
\end{proof}

\bigskip

For curves having a polynomial parametric representation we have an exact formula for the degree of the tangent bundle in terms of the degree of the curve.

\begin{theorem}\label{thm:degTCpolyparam}
Let $\CC\subseteq \A^n$ be a smooth algebraic curve having a polynomial parametric representation that satisfies conditions (P1) and (P2). Then $\deg(T\CC) = 2 \deg(\CC) - 1$.
\end{theorem}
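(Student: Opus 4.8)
The plan is to combine the parametric description of $T\CC$ from Lemma \ref{lem:paramTC} with the degree formula for parametric curves in Proposition \ref{prop:degCparam}, but applied to a \emph{surface} rather than a curve, via the bidegree machinery already set up for the proof of Theorem \ref{thm:degTC}. Write $\CP(t)=(g_1(t),\dots,g_n(t))$ with $g_i\in K[t]$, and let $\delta:=\max_i\deg(g_i)=\deg(\CC)$ by Proposition \ref{prop:degCparam} (here the common denominator is $g_0=1$). By Lemma \ref{lem:paramTC}, $T\CC$ is the image of the dominant map $\widehat{\CP}(t,s)=(\CP(t),s\,\CP'(t))$, so $T\CC$ is parameterized by the $2n$ polynomials $g_i(t)$ and $s\,g_i'(t)$, for $1\le i\le n$.

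First I would reduce to computing the two bidegrees, exactly as in the proof of Theorem \ref{thm:degTC}: since $\deg(T\CC)=\deg_{(1,0)}(\overline{T\CC}')+\deg_{(0,1)}(\overline{T\CC}')$, and that proof already shows $\deg_{(1,0)}(\overline{T\CC}')=\deg(\CC)$ and $\deg_{(0,1)}(\overline{T\CC}')=\omega(\CC)\deg(\Tan(\CC))$, it suffices to prove $\omega(\CC)\,\deg(\Tan(\CC))=\deg(\CC)-1$. The point is that under (P1)-(P2) the map $\widehat{\CP}$ is generically injective: the tangent direction at $\CP(t)$ is spanned by $\CP'(t)=(g_1'(t),\dots,g_n'(t))$, and a generic tangent direction determines $t$ uniquely because $\CP$ is proper, giving $\omega(\CC)=1$. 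Thus the task becomes showing $\deg(\Tan(\CC))=\deg(\CC)-1$.

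The cleanest way to get $\deg(\Tan(\CC))=\delta-1$ is to compute it as the degree of an image of a parametric surface and invoke Proposition \ref{prop:degCparam} in spirit. Since $\Tan(\CC)=\overline{\pi_2(T\CC)}$ is the closure of the image of $(t,s)\mapsto s\,\CP'(t)=(s\,g_1'(t),\dots,s\,g_n'(t))$, it is the affine cone over the projective curve $\PTan(\CC)$ parameterized by $t\mapsto(g_1'(t):\dots:g_n'(t))$ in $\mathbb{P}^{n-1}$; and $\deg(\Tan(\CC))=\deg(\PTan(\CC))$ as noted before the proof of Theorem \ref{thm:degTC}. The projective curve $\PTan(\CC)$ is cut by a generic hyperplane in $\mathbb{P}^{n-1}$ in a number of points equal to the number of $t$ with $\sum_i a_i g_i'(t)=0$ for generic $a$, which (by the resultant argument of Proposition \ref{prop:degCparam}, now applied to the derivatives $g_i'$) equals $\max_i\deg(g_i')$. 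Since at least one $g_i$ attains degree $\delta$ and $K$ has characteristic $0$, we get $\max_i\deg(g_i')=\delta-1$; one must check that the $g_i'$ have no common factor so that the parametrization of $\PTan(\CC)$ is proper and the counting is exact, but condition (P2) — $\CP'(t)\ne 0$ on a dense open set — guarantees that the $g_i'$ have no common root, hence no common linear factor, which is what the resultant nonvanishing needs.

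I expect the main obstacle to be justifying the two genericity-plus-properness claims rigorously: (i) that $\omega(\CC)=1$, i.e. that a generic tangent direction comes from a \emph{unique} point of $\CC$; and (ii) that the parametrization $t\mapsto(g_1'(t):\dots:g_n'(t))$ of $\PTan(\CC)$ is generically one-to-one so that $\deg(\PTan(\CC))$ equals $\max_i\deg(g_i')=\delta-1$ rather than a proper divisor of it. For (i), properness of $\CP$ together with (P2) should force injectivity of the induced direction map on a dense open set; for (ii), the degree of $\PTan(\CC)$ could \emph{a priori} be $(\delta-1)/\omega(\CC)$ if the direction map factored through a cover, but this is exactly compensated in the product $\omega(\CC)\deg(\Tan(\CC))$, so the safest route is to argue directly that $\omega(\CC)\deg(\Tan(\CC))$ equals the number of $t\in K$ solving $\sum_i a_i g_i'(t)=0$ for generic $a$ — counted with the fiber cardinality of $\pi_2$ — and that this total count is $\max_i\deg(g_i')=\delta-1$ regardless of how it splits between $\omega(\CC)$ and $\deg(\Tan(\CC))$. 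This reduces everything to the single resultant computation and avoids isolating $\omega(\CC)$ and $\deg(\Tan(\CC))$ separately.
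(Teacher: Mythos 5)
Your route is genuinely different from the paper's: you invoke Theorem \ref{thm:degTC} (Theorem A) and reduce everything to showing $\omega(\CC)\,\deg(\Tan(\CC))=\deg(\CC)-1$, whereas the paper never touches the bidegree formula here and instead intersects the parameterized surface $T\CC$ with a generic codimension-two linear variety, obtaining a system of two polynomials in $(t,s)$ whose common Newton polytope is the trapezoid with vertices $(0,0),(\delta,0),(\delta-1,1),(0,1)$, and then applies Bernstein--Kushnirenko (checking the facial systems) to get exactly $2\delta-1$ solutions. Your reduction is arguably cleaner once Theorem A is available, but it only works in the form you describe in your \emph{last} paragraph, not in the form you assert in paragraphs two and three.

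Concretely, both intermediate claims there are false. For $\CP(t)=(t,t^3)$, which is proper with nowhere-vanishing derivative, the direction map $t\mapsto(1:3t^2)$ is $2{:}1$, so $\omega(\CC)=2$ and $\deg(\Tan(\CC))=1\ne\delta-1=2$; only the product equals $\delta-1$. Properness of $\CP$ does not force injectivity of the induced direction map, and ``the $g_i'$ have no common factor'' does not make the parametrization of $\PTan(\CC)$ proper (it only guarantees that every root of $\sum_i a_ig_i'$ maps to a point of $\PTan(\CC)\cap\{L_a=0\}$). So the argument must be run exactly as you hedge at the end: for generic $a$ the roots of $G_a:=\sum_i a_ig_i'$ are simple (the resultant argument of Proposition \ref{prop:degCparam} applied to the $g_i'$, which needs $\gcd(g_1',\dots,g_n')=1$), they number $\delta-1$ since $\operatorname{char}K=0$, and they are partitioned into $\deg(\PTan(\CC))$ fibers of the direction map, each of generic cardinality $[K(t):K(\PTan(\CC))]=\omega(\CC)$ (here properness of $\CP$ is what identifies fibers of the direction map with fibers of $\pi_2$). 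That gives $\omega(\CC)\deg(\Tan(\CC))=\delta-1$ and the theorem. One further caveat, which the paper's own proof also glosses over: condition (P2) only asserts $\CP'\ne 0$ on a dense (hence cofinite) open set, so a common root of the $g_i'$ outside $\mathcal{U}$ is not literally excluded; you should either strengthen the hypothesis you use or note that such a common root can be removed. With the ``safest route'' made the actual proof, your argument is correct.
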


\begin{proof}
Let $\CP(t) =(g_1(t),\dots, g_n(t))$ be a polynomial parametric representation of $\CC$ satisfying (P1) and (P2).
Under our assumptions, by Lemma \ref{lem:paramTC}, the tangent bundle $T\CC\subseteq \A^{2n}$ is a surface with a polynomial parametric representation $\widehat{\CP}:\A^2 \to T\CC$ given by $\widehat{\CP}(t,s) = (g_1(t),\dots, g_n(t), s g_1'(t),\dots, s g'_n(t))$.
Note that, if $\mathcal{U}\subseteq \A^1$ is an open set such that $\CP$ satisfies (P1)  and (P2) in $\mathcal{U}$, then $\widehat{\CP}$ is injective in $\widehat{\mathcal{U}}:=\mathcal{U} \times \A^1$. Without loss of generality, we may assume $\mathcal{U} \subseteq \A^1\setminus \{0\}$.

To compute the degree of $T\CC$, we intersect it with linear varieties of codimension $2$:
\begin{equation}\label{eq:plane}
\Pi_{(a,b)}=\Big\{(x,y) \in \A^{2n} \mid a_{10}+\sum_{1\le i\le n}  a_{1i}x_i + \sum_{1\le j \le n} b_{1j} y_j =0, a_{20}+\sum_{1\le i\le n}  a_{2i}x_i + \sum_{1\le j \le n} b_{2j} y_j  =0\Big\}
\end{equation}
for $(a,b)\in \A^{4n+2}$ such that the rank of the associated coefficient matrix equals 2, and estimate the number of points in the intersection. Note that, for generic $(a,b)\in \A^{4n+2}$, we have $\Pi_{(a,b)} \cap T\CC\subseteq \A^n \times (\A^n\setminus \{0\})$, because $T\CC\cap (\A^n\times \{0\}) = \CC \times \{ 0\}$ has dimension $1$.
We will now use the parametric representation of $T\CC$ to reduce the problem to counting the number of solutions to a polynomial system.

For generic $(a,b)\in \A^{4n+2}$, the intersection $\Pi_{(a,b)} \cap T\CC$ is contained in the image of $\widehat{\CP}: \widehat{\mathcal{U}}\to T\CC$, since this image contains an open subset of $T\CC$. This implies that $\Pi_{(a,b)}\cap \CC$ is in one to one correspondence with
\[\Big\{(t,s) \in \widehat{\mathcal{U}} \mid a_{10}+\sum_{1\le i\le n}  a_{1i}g_i(t) + s \sum_{1\le j \le n} b_{1j} g_j'(t) =0, a_{20}+\sum_{1\le i\le n}  a_{2i}g_i(t) + s \sum_{1\le j \le n} b_{2j} g_j'(t)  =0\Big\} \]
For $k=1,2$, let $G_k(t,s) := a_{k0}+\sum_{1\le i\le n}  a_{ki}g_i(t) + s \sum_{1\le j \le n} b_{kj} g_j'(t) $. By our previous considerations, the degree of $T\CC$ equals the number of solutions in $\widehat{\mathcal{U}}$ of the system
\begin{equation}\label{eq:systemdeg}
G_{1}(t,s)=0, \ G_{2}(t,s)=0
\end{equation}
for generic coefficients $(a,b)\in \A^{4n+2}$.

Note that, for a fixed $t_0 \in K$, the polynomials $G_{1}(t_0,s)$ and  $G_{2}(t_0,s)$ do not have a common root in $K$ for generic $(a,b) \in \A^{4n+2}$, since they are two linear equations in $s$ with linearly independent coefficient vectors. Therefore, for generic $(a,b)$, all the solutions to \eqref{eq:systemdeg} in $(K\setminus\{0\})^2$ lie in $\widehat{\mathcal{U}}$.

Finally, we estimate the number of solutions of the system \eqref{eq:systemdeg} in $(K\setminus\{0\})^2$ by means of Bernstein's Theorem (see Proposition \ref{prop:BKK}).

To this end, let us determine the Newton polytopes of the polynomials $G_k$. These polynomials are of the form $G_k(t,s) = G_{k0}(t) + s G_{k1}(t)$, where $\deg(G_{k0}) \le \delta(\CP)$ and $\deg(G_{k1})\le \delta(\CP)-1$ with equalities for generic $(a,b)$. Then,
their Newton polytopes are contained in the polygon $Q$ with vertex set $\{(0,0), (\delta(\CP), 0), (\delta(\CP)-1, 1), (0,1)\}$. Moreover, the points $(\delta(\CP), 0)$ and $(\delta(\CP)-1, 1)$ are exponent vectors of monomials with non-zero coefficients in $G_k$ for generic $(a,b)$, because of the previous degree estimates, and $(0,0)$ corresponds to the term $a_{k0}$. The  monomial $s$, whose exponent vector is $(0,1)$, also appears with non-zero coefficient in $G_k$ for generic $(a,b)$ since, by condition (P2), $(g_1'(0),\dots, g'_n(0)) \ne 0$ and so, $G_{k1}(0) \ne 0$.
We conclude that, for generic $(a,b)$, the Newton polytope of $G_k$ is $Q$.

Therefore, Bernstein's Theorem (see Proposition \ref{prop:BKK}) states that the number of common roots of $G_1$ and $G_2$ in $(K\setminus\{0\})^2$ is at most $2 \,\hbox{area}(Q) = 2 \delta(\CP) - 1$. Furthermore, it ensures that the bound is attained provided that the associated polynomial systems supported on the faces of $Q$ do not have common zeros.

The systems associated with the faces determined by $(0,0)$ and $(\delta(\CP), 0)$ and by $(0,0)$ and $(0,1)$ are $\{G_1(t,0)=0, G_2(t,0)=0\}$ and $\{G_1(0,s)=0, G_2(0,s)=0\}$ respectively, and we have already seen that they do not have common zeros. For the face determined by $(0,1)$ and $(0, \delta(\CP)-1)$ the associated system is $\{sG_{11}(t) = 0, sG_{21}(t) = 0\}$, which does not have solutions in $(K\setminus\{0\})^2$ since $G_{11}(t)$ and $G_{21}(t)$ are generic linear combinations of the relatively prime polynomials $g'_1,\dots, g'_n$ and so, they do not have common roots. Finally, the face determined by $(\delta(\CP),0)$ and $(\delta(\CP)-1, 1)$ leads to the system $\{ t^{\delta(\CP) - 1} (\ell_{11}(a) t +  \ell_{12}(b)s ) = 0,  t^{\delta(\CP) - 1} (\ell_{21}(a) t+  \ell_{22}(b)s ) = 0 \}$, where $\ell_{k1}(a) = \sum_{i\in I} \hbox{lc}(g_i) a_{ki}$ and $\ell_{k2}(b) = \sum_{i\in I} \deg(g_i) \hbox{lc}(g_i) b_{ki}$  with $I=\{ 1\le i \le n\mid \deg(g_i) = \delta(\CP)\}$, which does not have solutions in $(K\setminus\{0\})^2$ for generic $a,b$.

From Proposition \ref{prop:degCparam}, we now conclude that $\deg(T\CC) = 2\delta(\CP) - 1  = 2 \deg(\CC) - 1$.
\end{proof}

\bigskip

Finally, for curves with a rational parametric representation we can prove an upper bound for $\deg(T\CC)$ in terms of $\deg(\CC)$. 

\begin{theorem}\label{thm:degTCratparam}
Let $\CC\subseteq \A^n$ be a smooth algebraic curve having a rational parametric representation that satisfies conditions (P1) and (P2). Then $\deg(T\CC) \le 3 \deg(\CC) - 2$.
\end{theorem}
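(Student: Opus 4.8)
The plan is to mirror the structure of the proof of Theorem \ref{thm:degTCpolyparam}, but handle the denominators that arise from a rational parametrization. By the common-denominator reduction recalled before Proposition \ref{prop:degCparam}, I may assume $\CP(t) = (g_1(t)/g_0(t), \dots, g_n(t)/g_0(t))$ with $g_0, g_1, \dots, g_n \in K[t]$, $\gcd(g_0, g_1, \dots, g_n) = 1$, and that (P1), (P2) hold on an open set $\mathcal{U} \subseteq \A^1 \setminus \{0\}$ where moreover $g_0$ does not vanish. By Proposition \ref{prop:degCparam} we have $\deg(\CC) = \max\{\deg(g_i); 0 \le i \le n\} =: \delta$. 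The first step is to write down, via Lemma \ref{lem:paramTC}, the parametric representation $\widehat{\CP}(t,s) = (\CP(t), s\,\CP'(t))$ of $T\CC$ and to clear denominators: each component $\CP'_j(t) = (g_j'(t) g_0(t) - g_j(t) g_0'(t))/g_0(t)^2$ has a common denominator $g_0^2$, so after reparametrizing $s \mapsto s/g_0(t)^2$ (which changes nothing in $(K\setminus\{0\})^2$ since $g_0$ is invertible on $\mathcal{U}$) the map $\widehat{\CP}$ becomes rational with the single denominator $g_0(t)^2$ in the $\y$-block and $g_0(t)$ in the $\x$-block.

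Next I intersect $T\CC$ with a generic codimension-$2$ linear variety $\Pi_{(a,b)}$ as in \eqref{eq:plane}. Substituting the parametrization and clearing the denominator $g_0(t)^2$ from each of the two linear equations, I obtain a system $G_1(t,s) = 0$, $G_2(t,s) = 0$ where, for $k = 1,2$,
\begin{equation*}
G_k(t,s) = g_0(t)^2\Big(a_{k0} + \sum_i a_{ki}\,\tfrac{g_i(t)}{g_0(t)}\Big) + s\sum_j b_{kj}\big(g_j'(t) g_0(t) - g_j(t) g_0'(t)\big).
\end{equation*}
The $s$-free part has degree $\le 2\delta$ (it equals $g_0(t)(a_{k0} g_0(t) + \sum_i a_{ki} g_i(t))$, a product of two polynomials of degree $\le \delta$) and the coefficient of $s$ has degree $\le 2\delta - 1$. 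As in the polynomial case, the degree of $T\CC$ equals the number of solutions of this system in $\widehat{\mathcal{U}}$ for generic $(a,b)$, and a genericity argument identical to the one in Theorem \ref{thm:degTCpolyparam} (two linear-in-$s$ equations with independent coefficient vectors have no common root for fixed $t_0$) ensures that every solution in $(K\setminus\{0\})^2$ already lies in $\widehat{\mathcal{U}}$, so I may count solutions there by Bernstein's Theorem (Proposition \ref{prop:BKK}).

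The key computation is the Newton polytope. Writing $G_k(t,s) = G_{k0}(t) + s\,G_{k1}(t)$ with $\deg G_{k0} \le 2\delta$ and $\deg G_{k1} \le 2\delta - 1$, and checking which extreme monomials survive generically, the Newton polytope is contained in the quadrilateral $Q$ with vertices $\{(0,0), (2\delta, 0), (2\delta-1, 1), (0,1)\}$, whose area is $2\delta - \tfrac{1}{2}$, giving mixed volume $2\,\mathrm{area}(Q) = 4\delta - 1$. This is too weak — it is roughly $4\deg(\CC)$ rather than the desired $3\deg(\CC) - 2$. The main obstacle, and the real content of the theorem, is therefore that the naive polytope overcounts: there is a nontrivial common factor. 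Concretely, $G_{k0}(t) = g_0(t)\big(a_{k0}g_0(t) + \sum_i a_{ki} g_i(t)\big)$ shares the factor $g_0(t)$, and one checks that $G_{k1}(t)$ is divisible by $g_0(t)$ as well only after accounting for the structure of $\CP'$; the correct move is to track the common factor coming from $g_0$ and from roots at infinity, i.e. to apply Bernstein's Theorem on the \emph{true} supports after removing the spurious factor, or equivalently to use the sharper form of Proposition \ref{prop:BKK} via the faces $G_{kv}$ to detect that the system supported on the relevant edges has common zeros, forcing a strict drop from $4\delta - 1$ down to $3\delta - 2$. I expect the bulk of the work to be this bookkeeping: showing that exactly $\delta - 1$ of the counted intersection points are spurious (arising from the multiplicity-$\delta$ behavior of $g_0$ or from the face at $t = \infty$), so that the genuine count of points of $T\CC$ is $4\delta - 1 - (\delta + 1) = 3\delta - 2 = 3\deg(\CC) - 2$, with the inequality (rather than equality) reflecting that some of these face systems may have common zeros and Bernstein's bound is then only an upper bound.
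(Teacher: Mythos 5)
Your setup is right and you correctly diagnose the difficulty---after clearing denominators the naive Bernstein count is about $4\deg(\CC)$, which is too weak---but the proposal stops exactly where the proof has to start. You write that ``the bulk of the work'' is showing that a certain number of the counted points are spurious, and you propose to extract this from the face systems $f_{kv}$ of Proposition \ref{prop:BKK}. That cannot work as stated: the second item of Proposition \ref{prop:BKK} only asserts that the number of torus solutions is \emph{strictly} less than the mixed volume when some face system has a common zero; it gives no quantitative drop, so it cannot take you from $4\delta-1$ down to $3\delta-2$. (Your accounting is also internally inconsistent: you claim ``exactly $\delta-1$'' spurious points but then subtract $\delta+1$.) In addition, the degree bound $\deg G_{k1}\le 2\delta-1$ you use is not the one needed: one must show $\deg\bigl(g_j'g_0-g_jg_0'\bigr)\le 2\delta-2$, which requires either observing the cancellation of leading terms in this Wronskian-type expression or, as the paper does, first normalizing the parametrization (via $t\mapsto t+c$, $t\mapsto t^{-1}$ and a translation of the curve, none of which affects $\deg(\CC)$ or $\deg(T\CC)$) so that $\deg(g_0)>\deg(g_i)$ for $i\ge 1$.

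The missing idea is that the intersection system is \emph{linear in $s$}, so one should eliminate $s$ rather than invoke Bernstein's theorem at all. Writing the two equations as $H_{k0}(t)/g_0(t)+s\,H_{k1}(t)/g_0^2(t)=0$ for $k=1,2$, with $H_{k0}=\sum_i a_{ki}g_i$ and $H_{k1}=\sum_j b_{kj}(g_j'g_0-g_jg_0')$, the $t$-coordinates of the solutions are among the roots of the $2\times 2$ determinant $\Delta_{(a,b)}(t)=H_{10}H_{21}-H_{20}H_{11}$, and for generic $(a,b)$ each such root yields exactly one solution $(t_0,s_0)$. With the normalization above, $\deg H_{k0}\le\deg(\CC)$ and $\deg H_{k1}\le 2\deg(\CC)-2$, so $\deg\Delta_{(a,b)}\le 3\deg(\CC)-2$ and the bound follows at once, with no face analysis or removal of spurious torus points.
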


\begin{proof}
Let $\mathcal{P}:\mathcal{U}\to \CC$, $\mathcal{P}(t) = (g_1(t)/g_0(t),\dots, g_n(t)/g_0(t))$, with $g_0,g_1\dots,g_n\in K[t]$ relatively prime polynomials and $\mathcal{U}\subseteq \A^1$ a Zariski dense open set, be a parametric representation of $\CC$ satisfying conditions (P1) and (P2). 

Without loss of generality,  we assume $\deg(g_0) > \deg(g_i)$ for $i=1,\dots, n$: if this is not the case, we can make the following reparametrizations and translations that do not modify the degrees of $\CC$ or $T\CC$, and preserve (P1) and (P2). First, by changing $t \mapsto t+c$ for a suitable $c\in K$, we may assume that $g_i(0) \ne 0$ for $i=0,\dots, n$. Then, by changing $t\mapsto t^{-1}$ and multiplying numerators and denominators by suitable powers of $t$, we get a rational parametric representation $(h_1(t)/h_0(t), \dots, h_n(t)/h_0(t))$ of $\CC$ where all the polynomials involved have the same degree. Finally, by Euclidean division, we write $h_i = q_i h_0 + r_i$ with $q_i \in K$ and  $\deg(r_i) <\deg(h_0)$ for $i=1,\dots, n$, and  consider the parametric representation $(r_1(t)/h_0(t), \dots, r_n(t)/h_0(t))$ of the translated curve $\CC + (-q)$, where $q=(q_1,\dots, q_n)$.

Then, by Proposition \ref{prop:degCparam}, we have $\deg(\CC) = \max \{\deg(g_i);0\le i \le n\} = \deg(g_0)$.

Now we proceed similarly as in the polynomial case. Applying Lemma \ref{lem:paramTC}, we obtain an injective parametric representation $\widehat{\mathcal{P}}: \widehat{\mathcal{U}} \to T\CC$, with $\widehat{\mathcal{U}}= \mathcal{U} \times \A^1 \subseteq \A^2$:
\[\widehat{\mathcal{P}}(t,s) = \Big(\dfrac{g_1(t)}{g_0(t)}, \dots, \dfrac{g_n(t)}{g_0(t)}, s \dfrac{g_1'(t)g_0(t)-g_1(t)g_0'(t)}{g_0^2(t)},\dots, s \dfrac{g_n'(t)g_0(t)-g_n(t)g_0'(t)}{g_0^2(t)}\Big)\]
We use this parametric representation to count the number of points in the intersection of $T\CC$ with a generic plane $\Pi_{(a,b)}$ of the form \eqref{eq:plane}. As in the proof of Theorem \ref{thm:degTCpolyparam}, this amounts to counting the number of solutions of the system
\begin{equation}\label{eq:systemratparam}
\frac{H_{10}(t)}{g_0(t)}+s \, \frac{H_{11}(t)}{g_0^2(t)} =0 , \ \frac{H_{20}(t)}{g_0(t)}+s \, \frac{H_{21}(t)}{g_0^2(t)} =0
\end{equation}
where
$H_{k0}(t):=\sum_{0\le i\le n}  a_{ki} g_i(t)$  and $H_{k1}(t):= \sum_{1\le j \le n} b_{kj}(g_j'(t)g_0(t)-g_j(t)g_0'(t))$  for $k=1,2$. Note that, for generic $(a,b)\in \A^{4n+2}$, all common solutions of the system  lie in the dense open set $\widehat{\mathcal{U}}\subseteq \A^2$. Furthermore, those $t_0\in K$ such that the system \eqref{eq:systemratparam} --which is linear in $s$-- has a solution whose first coordinate equals $t_0$ are zeros of
\[\Delta_{(a,b)}(t):=\det\left(\begin{array}{cc} H_{10}(t) & H_{11}(t)\\ H_{20}(t) & H_{21}(t) \end{array}\right).\]
For generic $(a,b)$, since the roots of $\Delta_{(a,b)}$ are not roots of $H_{11}$ or $g_0$, it follows that for each root $t_0$ of $\Delta_{(a,b)}$, there is a unique $s_0\in K$ such that $(t_0,s_0)$ is a solution of \eqref{eq:systemratparam}. Therefore, the degree of $T\CC$ is bounded by the degree of $\Delta_{(a,b)}$ for generic $(a,b) \in \A^{4n+2}$.

Now, for generic $(a,b)$, we have that $\deg(H_{k0})= \max\{\deg(g_i); 0\le i \le n\} = \deg(\CC)$ and $\deg(H_{k1}) = \max\{\deg(g_j' g_0-g_j g_0'); 1\le j \le n\}=  \max\{\deg(g_j); 1\le j \le n\} +\deg(g_0) - 1 \le 2\deg(\CC) -2$, because of our assumption that $\deg(g_0)>\deg(g_i)$ for $i=1,\dots, n$. We conclude that $\deg(\Delta_{(a,b)}) \le 3 \deg(\CC)-2$ and the result follows.
\end{proof}

\bigskip
Note that, for the particular class of curves having a (polynomial or rational) parametric representation, Theorems \ref{thm:degTCpolyparam} and \ref{thm:degTCratparam} give upper bounds for the degree of the tangent bundle that are linear on the degree of the curve. In particular, these bounds are sharper than the general bounds for the tangent bundle of arbitrary curves we obtain by applying the B\'ezout inequality (see Proposition \ref{proposition hipersup} for plane curves and Theorem \ref{cota principal} below for $d=1$ and arbitrary $n$).

\section{Bounds for the degree of the tangent bundle} \label{sec:bounds}

Throughout this section we discuss intrinsic bounds for the geometric degree of the tangent bundle of a smooth irreducible variety $V\subseteq \A^n$ of arbitrary dimension. 

\subsection{Upper bounds}\label{sec: upper_bounds}

For the case when $V$ is a hypersurface, Proposition \ref{proposition hipersup} provides an upper bound for $\deg(TV)$ as an immediate consequence of the B\'ezout theorem.  Now we will study the optimality of this bound.

\begin{example} \label{ejemplo optimalidad}
Let $\alpha, \beta \in K \setminus \{0\}$. Consider for each $(\alpha,\beta)$ the affine variety $W(\alpha,\beta) \subseteq \mathbb{A}^{2}$ given by
\begin{equation*}
    W(\alpha,\beta) := \{(x_1,x_2) \in \mathbb{A}^{2} \mid \alpha x_1^m + \beta x_2^m - 1 =0\}. 
\end{equation*}
It is easy to see that $W(\alpha,\beta)$ is an irreducible smooth curve. Therefore from Proposition \ref{prop:dimTV} its tangent bundle $TW(\alpha,\beta) \subseteq \mathbb{A}^{4}$, is a smooth irreducible algebraic variety of dimension $2$. Now, let $\Pi \subseteq \mathbb{A}^{4}$ be a generic linear variety of dimension $2$ given by
\begin{equation*}
    \Pi:= \left \{(x_1,x_2,y_1,y_2) \in \mathbb{A}^4 \mid  
     y_1 = a_1 x_1 + b_1 x_2 + c_1 , \
        y_2 = a_2 x_1 + b_2x_2 + c_2  
   \right \}.
\end{equation*}
For every $(\alpha,\beta)$, we will estimate $\deg(TW(\alpha,\beta))$ in terms of $m = \deg(W(\alpha,\beta))$. 
We have:
\begin{align*}
    \deg(TW(\alpha,\beta)) & = \#(TW(\alpha,\beta) \cap \Pi) \\ & = \#  \{  (x_1,x_2) \in \mathbb{A}^2 \mid 
    \alpha x_1^m+ \beta x_2^m=1,  \\
    & \hspace{1.8cm} m \alpha x_1^{m-1} (a_1 x_1 + b_1 x_2 + c_1)  + m \beta x_2^{m-1}(a_2 x_1 + b_2 x_2 + c_2) = 0   
     \}.
\end{align*}
Now, by the genericity of $a_i,b_i,c_i$, we can assume that all the solutions $(x_1,x_2)$ of the system lie in the torus $(\mathbb{A}^{1} \setminus\{0\})^{2}$. Then, we count this number of solutions by means of the Bernstein-Kushnirenko's theorem (see Proposition \ref{prop:BKK}):
for generic $(\alpha,\beta) \in \mathbb{A}^{2}$ we have a generic system of two polynomials in two variables such that the support of the first polynomial is $m\Delta^2$, where $\Delta^2$ is the $2$-standard simplex, and the support of the second polynomial is the trapezoid $T$ with vertices $(m-1,0), (m,0),$ $(0,m), (0,m-1)$. It follows that
 $\deg(TW(\alpha,\beta)) = MV(m \Delta^2, T)$.
By elementary computations, it can be shown that  $MV(m \Delta^2, T) = m^2$. Then, there exists $(\alpha,\beta)$ such that:
\begin{equation*}
    \deg(TW(\alpha,\beta)) = (\deg(W(\alpha,\beta)))^2.
\end{equation*}
\end{example}

By considering the product of the variety $W(\alpha,\beta)\subseteq \A^2$ with an appropriate affine space, one can show that the bound in  Proposition \ref{proposition hipersup} is optimal in the following sense: 

\begin{remark} \label{rem: optimality} For any $n,m \in \N$,  there exists a smooth irreducible hypersurface $V \subseteq \mathbb{A}^n$ of degree $m$ such that $\deg(TV) = m^2$.
\end{remark}

Unfortunately, the general case is rather different to the case of hypersurfaces. When applying Bézout's theorem directly, the obtained bounds for the degree of $TV$ depend on the number and degrees of polynomials defining $I(V)$.
However, by combining  Proposition \ref{prop:mumford} and B\'ezout's Theorem (see Proposition \ref{prop:bezout}) one can obtain the following intrinsic upper bound:

\begin{proposition}\label{ecuacion cota peor}
  Let $V \subseteq \mathbb{A}^n$ be a  smooth irreducible algebraic variety of dimension $d$. Then, $\deg(TV) \leq (\deg(V))^{n+d+1}$.  
\end{proposition}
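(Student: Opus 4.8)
The plan is to bound $\deg(TV)$ by combining Proposition \ref{prop:mumford}, which controls the degrees of a set of generators of $I(V)$, with the B\'ezout inequality of Proposition \ref{prop:bezout} applied to the explicit system of equations defining $TV$ given in Proposition \ref{prop:dimTV}. First I would invoke Proposition \ref{prop:mumford} to obtain polynomials $f_1,\dots, f_r\in I(V)$ generating $I(V)$ with $\deg(f_i)\le \deg(V)$ for all $i$. By Proposition \ref{prop:dimTV}, the ideal of $TV$ is then generated by the $2r$ polynomials
\[
f_1,\dots, f_r,\ \nabla f_1\cdot \y,\dots, \nabla f_r\cdot \y,
\]
so that $TV$ is exactly the zero set of these polynomials in $\A^{2n}$.

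The key observation is a degree count on each family. Each $f_i$ has degree at most $\deg(V)$. Each polynomial $\nabla f_i\cdot \y = \sum_{j=1}^n (\partial f_i/\partial x_j)\,y_j$ has degree at most $(\deg(V)-1)+1 = \deg(V)$, since the partial derivatives have degree at most $\deg(V)-1$ and the linear factor $y_j$ contributes one more to the total degree. Thus all $2r$ generators of $I(TV)$ have degree bounded by $\deg(V)$. The next step is to control the number $r$: since the $f_i$ have degree at most $\deg(V)$ and lie in $K[\x]$ (a ring in $n$ variables), by counting independent equations or appealing to the standard bound one may take $r$ bounded appropriately, but in fact the cleanest route is to use the product form of the B\'ezout inequality directly on the $2r$ equations.

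Applying the first (product) bound in Proposition \ref{prop:bezout} to the $2r$ defining polynomials yields
\[
\deg(TV)\le \prod_{i=1}^r \deg(f_i)\cdot \prod_{i=1}^r \deg(\nabla f_i\cdot \y)\le (\deg(V))^{2r}.
\]
To reach the stated exponent $n+d+1$, I would bound $r$ by the number of generators needed; a standard estimate (again via Proposition \ref{prop:mumford} or a dimension count) gives $2r\le n+d+1$, after which the claimed inequality $\deg(TV)\le (\deg(V))^{n+d+1}$ follows immediately. The main obstacle here is precisely pinning down the bound on $r$, or equivalently arranging the B\'ezout application so that the exponent comes out to be $n+d+1$ rather than a larger quantity; this likely requires selecting a sub-collection of the $2r$ equations of the appropriate cardinality (matching the codimension $2n-2d$ of $TV$, together with slack) rather than using all of them blindly. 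Once the counting of equations is organized correctly, the degree bookkeeping is routine and the conclusion is mechanical.
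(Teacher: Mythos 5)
Your setup is right (Propositions \ref{prop:mumford} and \ref{prop:dimTV}, then B\'ezout), but the argument has a genuine gap exactly where you flag it: the product form of the B\'ezout inequality gives $\deg(TV)\le (\deg(V))^{2r}$, and there is no bound of the form $2r\le n+d+1$ available. Proposition \ref{prop:mumford} controls only the \emph{degrees} of the generators, not their \emph{number}; $r$ can be arbitrarily large compared to $n$ and $d$, so the exponent $2r$ cannot be tamed this way. Your fallback of selecting a sub-collection of $2n-2d$ equations is also not a repair: even if such a sub-collection cut out $TV$ set-theoretically (which would itself need an argument), it would produce the exponent $2(n-d)$, not $n+d+1$, and these are incomparable in general.

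The step you are missing is to use the \emph{other} branch of the minimum in Proposition \ref{prop:bezout}, the one whose exponent is the dimension of the variety being intersected rather than the number of hypersurfaces. Write
\[
TV \;=\; (V\times \A^n)\ \cap\ \bigcap_{i=1}^{r}\{(x,y)\in\A^{2n}\mid \nabla f_i(x)\cdot y=0\},
\]
note that $\deg(V\times\A^n)=\deg(V)$, that $\dim(V\times\A^n)=n+d$, and that each hypersurface $\{\nabla f_i\cdot \y=0\}$ has degree at most $\deg(V)$ (your degree count for these is correct). Then Proposition \ref{prop:bezout} gives
\[
\deg(TV)\ \le\ \deg(V\times\A^n)\,\Bigl(\max_i \deg(\nabla f_i\cdot\y)\Bigr)^{\dim(V\times\A^n)}\ \le\ \deg(V)\cdot(\deg(V))^{n+d}\ =\ (\deg(V))^{n+d+1},
\]
with no dependence on $r$ whatsoever. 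This is precisely how the paper proceeds; once you switch to this branch of the inequality, the proof closes with no further counting of equations.
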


\begin{proof}
From Proposition \ref{prop:mumford}, let  $f_1,\ldots,f_r$ in $K[\x]$ be a system of generators of $I(V)$ whose degrees are bounded by $\deg(V)$. By Propositon \ref{prop:dimTV}, $I(TV)$ can be generated by the polynomials $f_1,\ldots,f_r,\nabla f_1 \cdot \y, \ldots, \nabla f_r \cdot \y$ in $K[\x, \y ]$. Therefore, applying the first inequality in Proposition \ref{prop:bezout} to the variety $V\times \A^n$ and the hypersurfaces defined by polynomials $\nabla f_i \cdot \y$, $i=1,\dots, r$ in $\A^n \times \A^n$, we obtain
\begin{equation*}
\deg(TV)\le \deg(V\times \A^n)\left(\max_{1\le i\le r} \deg(\nabla f_i\cdot \y)\right)^{\dim(V \times \mathbb{A}^n)}\le \deg(V)^{1+d+n},
\end{equation*} 
since $\deg(V\times \A^n) = \deg(V)$, $\deg(\nabla f_i \cdot \y)\le \deg(V)$ for $i=1,\dots,r$, and $\dim(V\times \A^n)= d+n$.
\end{proof}

\bigskip
 Even though the previous bound is intrinsic, it is far from optimal: Proposition \ref{proposition hipersup} and Remark \ref{rem: optimality} show that in the case $d=n-1$, the degree of $TV$ is bounded optimally by $(\deg(V))^{2}$, whereas in this case the inequality in Proposition \ref{ecuacion cota peor}  gives  $\deg(TV) \leq (\deg(V))^{2n}$.

We now present the main result for this section, which gives a more precise, intrinsic upper bound for $\deg(TV)$.

\begin{theorem}\label{cota principal}
Let $V \subseteq \mathbb{A}^n$ be a smooth irreducible algebraic variety of dimension $d$. Then,
\begin{equation*}
    \deg(TV) \leq \min \left \{(\deg(V))^{n-d+1}, \deg(V)\left((n-d)(\deg(V)-1)+1\right)^{d} \right \}.
\end{equation*}
\end{theorem}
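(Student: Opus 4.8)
The plan is to prove the two bounds separately, in each case realizing $TV$ (or a generic linear section of it) as an intersection whose degree is controlled by one of the two Bézout inequalities of Proposition~\ref{prop:bezout}. Write $\delta := \deg(V)$. Using Proposition~\ref{prop:mumford} I fix generators $f_1,\dots,f_r$ of $I(V)$ with $\deg(f_i)\le \delta$, and I take $n-d$ generic $K$-linear combinations $g_1,\dots,g_{n-d}$ of them. By the Jacobian Criterion (Proposition~\ref{prop: jacobian}), on a dense open subset $U\subseteq V$ the gradients $\nabla g_1(p),\dots,\nabla g_{n-d}(p)$ are linearly independent and span the normal space, so that $T_pV=\ker J(p)$ for $p\in U$, where $J:=\frac{\partial(g_1,\dots,g_{n-d})}{\partial(x_1,\dots,x_n)}$; recall also from Proposition~\ref{prop:dimTV} that $\dim(TV)=2d$.

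For the first bound I would consider
\[ W := (V\times \A^n)\cap\bigcap_{i=1}^{n-d}\{(x,y)\in\A^{2n} : \nabla g_i(x)\cdot y =0\}. \]
Since every $(p,y)\in TV$ satisfies $\nabla g_i(p)\cdot y=0$, we have $TV\subseteq W$; and since the fibre of $\pi_1\colon W\to V$ over each $p\in U$ equals $T_pV$, the open set $W\cap\pi_1^{-1}(U)$ coincides with $TV\cap\pi_1^{-1}(U)$, which is dense in $TV$. Hence $TV$ is an irreducible component of $W$, so $\deg(TV)\le\deg(W)$. Each polynomial $\nabla g_i\cdot y$ has degree $\le\delta$ and $\deg(V\times\A^n)=\deg(V)=\delta$, so the product form of Proposition~\ref{prop:bezout} applied to $V\times\A^n$ and the $n-d$ hypersurfaces gives $\deg(W)\le \delta\cdot\delta^{\,n-d}=\delta^{\,n-d+1}$, which is the first bound.

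For the second bound I would compute $\deg(TV)$ by intersecting it with a generic affine linear variety $\Lambda=\{L_1=\dots=L_{2d}=0\}$ of codimension $2d$, where $L_k(x,y)=\alpha_k\cdot x+\beta_k\cdot y-\gamma_k$. A point of $TV\cap\Lambda$ consists of a base point $p\in V$ together with $y\in T_pV=\ker J(p)$ solving the $2d$ affine equations $\beta_k\cdot y=\gamma_k-\alpha_k\cdot p$; for generic $\Lambda$ this fibrewise linear system has at most one solution $y$, so $(p,y)\mapsto p$ identifies $TV\cap\Lambda$ with the finite set $Z:=\{p\in V : \text{the system is consistent}\}$ and $\deg(TV)=\#Z$. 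For generic $B:=(\beta_k)_k\in K^{2d\times n}$ the matrix $\binom{J(p)}{B}$ has rank $n$ on $U$, so for $p\in U$ consistency is equivalent to
\[ \operatorname{rank}\begin{pmatrix} J(p) & 0 \\ B & c(p)\end{pmatrix}\le n, \qquad c(p):=(\gamma_k-\alpha_k\cdot p)_k, \]
that is, to the vanishing of all $(n+1)\times(n+1)$ minors of this $(n+d)\times(n+1)$ matrix. In any nonzero term of such a minor the last column must be taken from a row of $(B\mid c(p))$, contributing a single factor $c_k(p)$ of degree $1$, while at most $n-d$ of the remaining factors come from rows of $J$ (each of degree $\le\delta-1$); hence every such minor has degree $\le (n-d)(\delta-1)+1$. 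Because $\pi_1^{-1}(V\setminus U)\cap TV$ has dimension $<2d$, no point of $TV\cap\Lambda$ projects into $V\setminus U$ for generic $\Lambda$, so $Z\subseteq U$ and its points are isolated in the intersection of $V$ with these minor hypersurfaces. The second form of Proposition~\ref{prop:bezout}, whose exponent is $\dim(V)=d$ regardless of the number of minor equations, then yields $\#Z\le \deg(V)\big((n-d)(\delta-1)+1\big)^{d}$, completing the proof.

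The main obstacle is the degree bookkeeping in the second bound. Producing the factor $(n-d)(\delta-1)+1$ requires using the sharp bound $\delta-1$ on the Jacobian entries and verifying that a maximal minor of the augmented matrix collects at most $n-d$ of them together with exactly one linear factor from $c(p)$; it is equally essential that the second Bézout inequality contributes the exponent $\dim(V)=d$ rather than the (much larger) number of minors. The supporting genericity facts—that the fibrewise system is consistent at only finitely many $p$, each with a unique $y$, and that these $p$ avoid the degeneracy locus $V\setminus U$ so as to be isolated in the minor variety—are all secured by choosing the data $\alpha_k,\beta_k,\gamma_k$ generically together with the dimension count for $\pi_1^{-1}(V\setminus U)\cap TV$.
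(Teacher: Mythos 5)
Your argument is correct and follows the same overall architecture as the paper's proof --- degree-bounded generators from Proposition \ref{prop:mumford}, restriction to a dense open set $U\subseteq V$ where $n-d$ tangency equations cut out $T_pV$, intersection with $2d$ generic linear forms, elimination of $\y$, and a Bézout estimate with exponent $d$ --- but the elimination step in the second bound is carried out by a genuinely different mechanism. The paper assembles the $n\times n$ system consisting of the $n-d$ gradients together with the $\y$-parts of only $d$ of the linear forms, solves for $\y$ by Cramer's rule, and substitutes into the remaining $d$ forms; this yields exactly $d$ polynomials $Q_1,\dots,Q_d$ in $\x$ of degree at most $(n-d)(\deg(V)-1)+1$, so either form of Proposition \ref{prop:bezout} finishes the proof. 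You instead keep all $2d$ forms and encode consistency of the overdetermined affine system through the vanishing of the $(n+1)\times(n+1)$ minors of the augmented matrix; your degree count for these minors (at most $n-d$ Jacobian factors of degree $\le \deg(V)-1$ times a single linear factor from the last column) is right, and you correctly flag that one must then use the $\bigl(\max_i\deg V_i\bigr)^{\dim V}$ form of the Bézout inequality, since the number of minors far exceeds $d$. Both routes give the identical bound; the paper's is slightly leaner in that the product form of Bézout already suffices, while yours trades Cramer's rule for that dependence on the max-degree form. One small imprecision: from the fact that $TV\cap\Lambda$ projects into $U$ you conclude $Z\subseteq U$, but a point $p\in V\setminus U$ could make the system consistent with a solution $y\notin T_pV$, since $\ker J(p)$ may strictly contain $T_pV$ on the degeneracy locus. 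This is harmless --- the chain you actually need is $\deg(TV)=\#(TV\cap\Lambda)=\#(Z\cap U)$ together with the isolatedness of the points of $Z\cap U$ in $V\cap\{\text{minors}=0\}$ --- but $Z$ should be replaced by $Z\cap U$ throughout.
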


\begin{proof}[{Proof of Theorem \ref{cota principal} (Theorem B in the Introduction)}]
Consider, by Proposition \ref{prop:mumford}, a system  of generators $f_1,\ldots,f_r$ for the ideal $I(V)\subseteq K[\x]$ with $\deg(f_i) \leq \deg(V)$ for $i=1,\ldots,r$. As $V$ is smooth, by the Jacobian criterion, there is an $n-d$ minor of the associated Jacobian matrix that do not vanish identically on $V$. Without loss of generality, we may assume that it is the principal minor corresponding to the first $n-d$ rows and the first $n-d$ columns. Let $\mathcal{U} \subseteq V$ be the non-empty open set given by the complement of the zero set of this minor. Denote by $T\mathcal{U}$ the algebraic set $T\mathcal{U}:=  (\mathcal{U}\times \mathbb{A}^n) \cap TV$, which is a dense subset of $TV$. It suffices to estimate $\deg(T\mathcal{U})$.

 Consider $\ell_1\ldots,\ell_{2d}$ generic polynomials of degree 1 in $K[\x, \y]$ such that
$$\deg(TU) = \# \left (T\mathcal{U} \cap \left \{ (x,y) \in \mathbb{A}^{2n} \mid \ell_k(x,y) = 0 \ \forall\, 1 \leq k \leq 2d  \right \} \right ).$$
Then, $\deg(T\mathcal{U})$ equals the cardinality of the set
\begin{equation} \label{eq:degTU}
     \left\{
    (x,y)\in \mathbb{A}^{2n} \mid 
       x \in \mathcal{U}, \ 
       \nabla f_i(x) \cdot y = 0 \ \forall\, 1 \leq i \leq n-d, \
        \ell_k(x,y) = 0 \ \forall \, 1 \leq k \leq 2d
  \right  \} ,
\end{equation}
since $\deg(\mathcal{U}) = \deg(V)$ as $\mathcal{U}$ is a dense open subset.
Now, applying Bezout's inequality, since $\deg(\nabla f_i(\x)\cdot \y) \leq \deg(V)$ for every $i$,  we obtain:
\begin{equation*}
    \deg(T\mathcal{U}) \leq \deg(\mathcal{U}) (\deg(V))^{n-d} = (\deg(V))^{n-d+1}. 
\end{equation*}

It remains to be shown that the following inequality holds:
\begin{equation}\label{eq:bound2_degTU}
    \deg(T\mathcal{U}) \leq \deg(V)((n-d)(\deg(V)-1)+1)^{d}.
\end{equation}
Let $A(\textbf{x}) \in K[\x]^{n\times n}$ be the matrix given by:
$A(\textbf{x}) = \begin{pmatrix}
        J \\ \hline B
    \end{pmatrix}$, 
where $J$ is the $(n-d) \times n$ matrix given by the Jacobian of $f_1,\ldots, f_{n-d}$ and $B$ is the $d\times n$ matrix given by the coefficients of the monomials $y_1,\dots, y_n$ in $\ell_1\ldots,\ell_{d}$, and  let $E(\textbf{x}) \in K[\x]^{n\times 1}$ be defined as 
$E(\textbf{x}) = (0,\dots, 0, -\ell_1(\textbf{x},0), \dots, -{\ell_d}(\textbf{x},0))^t$. 
In order to express $\textbf{y}$ in terms of $\textbf{x}$ we consider the system 
\begin{equation}\label{ecuacion en demostración}
    A(\textbf{x})\, \textbf{y}^{t} = E(\textbf{x}).
\end{equation}
By the genericity of $\ell_1,\ldots,\ell_{d}$, we may assume that $A(x)$ is invertible for all $x \in \mathcal{U}$. By Cramer's rule, for every $1\le j \le n$, the $j$th coordinate of the unique solution to \eqref{ecuacion en demostración} is
$y_j= \dfrac{\det(A_{j,E})}{\det(A)}$,
where $A_{j,E}$ denotes the matrix obtained by replacing the $j$th column of $A$ with $E$.
Set $P_0(\x):=\det(A)$ and, for $1\le j \le n$, $P_j(\x):= \det(A_{j,E})$. Substituting $\y= \left(\frac{P_1(\x)}{P_0(\x)}, \dots, \frac{P_n(\x)}{P_0(\x)}\right)$  in the last $d$ linear equations defining the set \eqref{eq:degTU} and clearing denominators, we obtain polynomials
$Q_i(\x) = P_0(\x) \cdot \ell_{d+i}\left(\x, \frac{P_1(\x)}{P_0(\x)}, \dots, \frac{P_n(\x)}{P_0(\x)}\right)$, for $1\le i \le d,$
satisfying:
$$\deg(T\mathcal{U}) = \# \left ( \left\{x \in \mathbb{A}^n \mid x \in \mathcal{U},  \
          Q_i(x) = 0  \ \forall \, 1 \leq i \leq d \right \} \right ).$$
It can be easily seen that $\deg(P_0) = \deg(\det(A))\le  (n-d) (\deg(V) - 1)$ and, for $1\le j \le n$, $\deg(P_j)=\deg(\det(A_{j,E})) \leq (n-d)(\deg(V)-1) + 1$ and, as the polynomials $Q_i$ are linear combinations of $P_0$, $x_1 P_0, \dots, x_n P_0$, $P_1,\dots, P_n$, then
$\deg(Q_i) \leq (n-d)(\deg(V)-1)+1$ for all $1\le i \le d$.
Therefore, the desired bound \eqref{eq:bound2_degTU} follows by Bezout's inequality. \end{proof}

\bigskip
Observe that, for the case $d= n-1$,  Theorem \ref{cota principal} recovers exactly Proposition \ref{proposition hipersup}. However, despite recovering the optimal bound in this case, we will show that when the setup is generic, better estimates can be obtained.

Let $n \in \mathbb{N}$. For each $j \in \mathbb{N}$, we write $\CL_j$ for the topological space $(K_{j}[\x], \tau)$ where $K_j[\x]$ is the set of polynomials of degree at most $j$ in $n$ variables and $\tau$ is the Zariski topology in their coefficient vectors. 
The following lemma summarizes the properties of algebraic varieties defined by generic polynomials in this topological space (see, for instance, \cite{Masser1983}).

\begin{lemma}\label{lema VF}
Let $r,n \in \mathbb{N}$, $r<n$, and  consider $k_1,\ldots,k_r \in \mathbb{N}$. Then there exists a non-empty Zariski open set $\mathcal{U} \subseteq \CL_{k_1} \times \cdots \times \CL_{k_r}$ such that, for each $F =(f_1,\ldots,f_r) \in \mathcal{U}$, the algebraic set $V(F)\subseteq \A^n$ is a smooth and irreducible variety of dimension $n-r$, $I(V(F)) = (f_1,\ldots,f_r)$ and $\deg(V(F)) = \displaystyle \prod_{i=1}^r k_i$. $\square$
\end{lemma}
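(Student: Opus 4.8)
The plan is to exhibit, for each of the four assertions (smoothness, the values of the dimension and the defining ideal, irreducibility, and the degree), a non-empty Zariski open subset of the parameter space on which it holds, and then to intersect them. Identifying $\CL_{k_1}\times\cdots\times\CL_{k_r}$ with the affine space $\A^N$ of coefficient vectors, $N=\sum_{i=1}^r\binom{n+k_i}{n}$, this space is irreducible, so a finite intersection of non-empty open subsets is again non-empty. The backbone of the argument is the \emph{incidence variety}
\[
\Sigma=\{(F,x)\in\A^N\times\A^n\mid f_1(x)=\cdots=f_r(x)=0\},
\]
together with its two projections $p\colon\Sigma\to\A^N$ and $q\colon\Sigma\to\A^n$. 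For fixed $x$, the fiber $q^{-1}(x)$ is cut out by the $r$ independent linear conditions $f_i(x)=0$ (independent because the constant coefficient of each $f_i$ enters with coefficient $1$), so $q$ realizes $\Sigma$ as a vector subbundle of $\A^N\times\A^n$; hence $\Sigma$ is smooth and irreducible of dimension $N+n-r$.

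For non-emptiness, dimension and smoothness I would argue as follows. Over the algebraically closed field $K$, the projective complete intersection of $r\le n$ hypersurfaces is never empty, and for generic $F$ its locus at infinity has dimension $\le n-1-r<n-r$; hence $V(F)\neq\varnothing$, $p$ is dominant, and the theorem on the dimension of fibers gives $\dim V(F)=\dim\Sigma-N=n-r$ for $F$ in a dense open set. Working in characteristic $0$, \emph{generic smoothness} applied to $p\colon\Sigma\to\A^N$ (with $\Sigma$ smooth) produces a dense open $\mathcal U_1$ such that for $F\in\mathcal U_1$ the fiber $V(F)$ is smooth of pure dimension $n-r$ and the $r\times n$ Jacobian of $(f_1,\dots,f_r)$ has rank $r=n-(n-r)$ at every point of $V(F)$. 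The equality $I(V(F))=(f_1,\dots,f_r)$ is then immediate from the ``Moreover'' part of the Jacobian Criterion (Proposition \ref{prop: jacobian}): the generators satisfy condition (c) at every point, so they generate the full vanishing ideal.

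Irreducibility is the step I expect to be the main obstacle, and the only place where the hypothesis $r<n$ is essential. I would obtain it from \emph{Bertini's irreducibility theorem} (valid in characteristic $0$): viewing hypersurfaces of degree $\le k$ as hyperplane sections through the $k$-th Veronese embedding, the linear system is base-point-free, so a general member cuts an irreducible variety of dimension $\ge 2$ in an irreducible one. Cutting $\A^n$ successively by generic $f_1,\dots,f_r$, the variety sectioned at the $j$-th step has dimension $n-j+1$, which is $\ge 2$ precisely while $j\le n-1$; since $r<n$, this remains true through the last cut, yielding a dense open $\mathcal U_2$ on which $V(F)$ is irreducible. The delicate point, beyond the repeated use of Bertini, is the interaction with the hyperplane at infinity, which must be controlled so that the affine section inherits irreducibility; this is exactly the kind of care taken in \cite{Masser1983}.

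Finally, the degree is computed on $\mathcal U_1\cap\mathcal U_2$ by intersecting with a generic linear space $L$ of complementary dimension $r$. The upper bound $\deg(V(F))\le\prod_{i=1}^r k_i$ follows from the first inequality in Proposition \ref{prop:bezout} together with $\deg f_i=k_i$ for generic $F$. For the matching lower bound, restricting $f_1,\dots,f_r$ to $L\cong\A^r$ yields generic polynomials of degrees $k_1,\dots,k_r$ in $r$ variables; for a generic choice these have exactly $\prod_{i=1}^r k_i$ common zeros, all simple and none at infinity, as one sees from B\'ezout in $\PP^r$ or from the Bernstein--Kushnirenko theorem (Proposition \ref{prop:BKK}), since the relevant mixed volume is $MV(k_1\Delta,\dots,k_r\Delta)=\big(\prod_i k_i\big)\,MV(\Delta,\dots,\Delta)=\prod_i k_i$. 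Hence $\#(V(F)\cap L)=\prod_i k_i$, and by the characterization of the geometric degree as the maximal number of points in such a section (\cite[Lemma 1 and Prop.~1]{Heintz1983}) we obtain $\deg(V(F))=\prod_i k_i$ on a dense open $\mathcal U_3$. Taking $\mathcal U=\mathcal U_1\cap\mathcal U_2\cap\mathcal U_3$ completes the argument.
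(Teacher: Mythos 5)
The paper offers no proof of this lemma at all --- it is stated with a terminal square and a pointer to \cite{Masser1983} --- so your argument is not competing with an in-text proof but supplying a self-contained one, and it is correct in outline. Your route (the incidence variety $\Sigma\subseteq\A^N\times\A^n$, which is an affine subbundle over $\A^n$ because each condition $f_i(x)=0$ is a nonzero linear form in the coefficients of $f_i$ alone, hence smooth and irreducible of dimension $N+n-r$; generic smoothness of $p\colon\Sigma\to\A^N$ in characteristic $0$ for smoothness, pure dimension $n-r$, and, via the ``Moreover'' clause of Proposition \ref{prop: jacobian}, the equality $I(V(F))=(f_1,\ldots,f_r)$; iterated Bertini through the Veronese for irreducibility, where $r<n$ enters exactly as the dimension-$\ge 2$ hypothesis; and B\'ezout plus one well-chosen linear section for the degree) is the standard machinery for such genericity statements and matches what the cited literature does. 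Two steps should be made explicit in a full write-up, though neither is a gap in the idea. First, in the Bertini induction the admissible open set of $f_j$ depends on the previously chosen $(f_1,\ldots,f_{j-1})$; to extract a single non-empty open subset of the product $\CL_{k_1}\times\cdots\times\CL_{k_r}$ you need either to run Bertini over the function field of the parameter space or to invoke constructibility of the locus of parameters with geometrically irreducible fibre for the family $\Sigma\to\A^N$, so that containing the generic point forces containing a dense open set. Second, for the degree you should fix one linear space $L_0$ of dimension $r$ in general position once and for all, note that restriction $\CL_{k_i}\to\CL_{k_i}(L_0)$ is a surjective linear map (so generic $F$ restricts to generic degree-$k_i$ data on $L_0\cong\A^r$), conclude $\#(V(F)\cap L_0)=\prod_i k_i$ finite by B\'ezout or Proposition \ref{prop:BKK}, and combine with the B\'ezout upper bound and the characterization of $\deg$ as the maximal finite cardinality of such sections; this yields an honest open set $\mathcal{U}_3$ in the $F$-space rather than a condition whose genericity is entangled with the choice of $L$.
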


Now, as an immediate consequence of the definition of the tangent bundle from equations defining the variety \eqref{eq:defTV}, Bezout's inequality, and Lemma \ref{lema VF}, we deduce:

\begin{corollary}\label{corolario cuadrado}
     Let $r, n \in \N$, $r<n$, and consider $k_1,\ldots,k_r \in \mathbb{N}$. Then there exists a non-empty Zariski open set $\mathcal{U} \subseteq \CL_{k_1} \times \ldots \times \CL_{k_r}$ such that, for each $F =(f_1,\ldots,f_r) \in \mathcal{U}$,  the variety $V_F:=V(F)\subseteq \A^n$ is smooth and irreducible, and we have:
     \begin{equation*}
         \deg(TV_F) \leq (\deg(V_F))^{2}.
     \end{equation*}
\end{corollary}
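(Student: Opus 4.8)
The plan is to realize $TV_F$ explicitly as the common zero set of $2r$ polynomials in $\A^{2n}$ and then apply the product form of the B\'ezout inequality; everything is already packaged in the cited results, so the argument is short.

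First I would invoke Lemma \ref{lema VF} to obtain the non-empty Zariski open set $\mathcal{U} \subseteq \CL_{k_1} \times \cdots \times \CL_{k_r}$ on which, for every $F=(f_1,\dots,f_r)\in\mathcal{U}$, the variety $V_F$ is smooth and irreducible of dimension $n-r$, with $I(V_F) = (f_1,\ldots,f_r)$ and $\deg(V_F) = \prod_{i=1}^r k_i$. Since $I(V_F)=(f_1,\ldots,f_r)$, the description \eqref{eq:defTV} of the tangent bundle (equivalently Proposition \ref{prop:dimTV}) yields
\[
TV_F = \{(x,y) \in \A^{2n} \mid f_1(x) = 0, \ldots, f_r(x) = 0,\ \nabla f_1(x) \cdot y = 0, \ldots, \nabla f_r(x) \cdot y = 0\},
\]
so $TV_F$ is cut out by the $2r$ polynomials $f_1,\ldots,f_r,\nabla f_1\cdot\y,\ldots,\nabla f_r\cdot\y$ in $K[\x,\y]$. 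Here $\deg(f_i) = k_i$, and since each entry of $\nabla f_i$ has degree at most $k_i-1$ and is multiplied by a linear form $y_j$, we have $\deg(\nabla f_i \cdot \y) \le k_i$ for every $i$.

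Then I would apply the first (product) inequality of Proposition \ref{prop:bezout} to these $2r$ hypersurfaces in $\A^{2n}$, obtaining
\[
\deg(TV_F) \le \prod_{i=1}^r \deg(f_i) \cdot \prod_{i=1}^r \deg(\nabla f_i \cdot \y) \le \left(\prod_{i=1}^r k_i\right)^{2} = (\deg(V_F))^{2},
\]
which is exactly the asserted bound.

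As for the main obstacle: there is essentially no hard step, since the substance is carried entirely by Lemma \ref{lema VF} and Proposition \ref{prop:dimTV}. The only points requiring care are (i) using the product form of the B\'ezout inequality rather than the $\max$-power form, since the latter would give only the much weaker estimate $(\max_i k_i)^{2n}$, and (ii) recording that $\deg(\nabla f_i\cdot\y)\le k_i$ so that the two products combine to $(\prod_i k_i)^2 = (\deg(V_F))^2$. Because the conclusion is an inequality, no genericity beyond that provided by Lemma \ref{lema VF} is needed, and any degree drop in some $\nabla f_i\cdot\y$ only sharpens the bound.
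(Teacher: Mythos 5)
Your proposal is correct and follows essentially the same route as the paper: invoke Lemma \ref{lema VF} for the genericity statement, describe $TV_F$ via the $2r$ generators from Proposition \ref{prop:dimTV}, and apply the product form of the B\'ezout inequality to obtain $\deg(TV_F)\le \prod_i \deg(f_i)\cdot\prod_i \deg(\nabla f_i\cdot\y)\le (\deg(V_F))^2$. Your added remarks on why the product form (rather than the $\max$-power form) is needed and on the degree bound $\deg(\nabla f_i\cdot\y)\le k_i$ are accurate but do not change the argument.
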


\begin{proof}
Consider a non-empty Zariski open set $\mathcal{U}\subseteq \CL_{k_1} \times \ldots \times \CL_{k_r}$ as in Lemma \ref{lema VF}. Then, for $F=(f_1,\ldots,f_r)\in \mathcal{U}$, the variety $V_F:=V(F)$ is smooth, irreducible, has dimension $n-r$ and $\deg(V_F) = \displaystyle \prod_{i=1}^r k_i$. Now, by proposition \ref{prop:dimTV}, $I(TV_F) = (f_1,\ldots,f_r, \nabla f_1 \cdot \y,\ldots, \nabla f_r \cdot \y)$ and then, by Bézout's inequality we obtain: 
\begin{equation*}
    \deg(TV_F) \leq \prod_{i=1}^r \deg(f_i) \cdot \prod_{i=1}^r \deg( \nabla f_i \cdot \y) = (\deg(V_F))^{2} 
\end{equation*}
as stated.
\end{proof}

\bigskip

Corollary \ref{corolario cuadrado} shows that if a variety is defined by generic polynomials, then the degree of its tangent bundle is bounded by the square of its degree, thus recovering the bound in Proposition \ref{proposition hipersup} which holds for hypersurfaces. On the other hand, Theorem \ref{cota principal} implies that if $\CC\subseteq \A^n$, $n\ge 2$, is an arbitrary irreducible smooth curve, then
\begin{equation*}
    \deg(T\CC) \leq \deg(\CC)((n-1)(\deg(\CC)-1) +1) \leq  (n-1) (\deg(\CC))^2,
\end{equation*}
which is also quadratic in the degree of the curve. 
Unfortunately, whenever $V$ is neither a curve nor a hypersurface, the upper bounds obtained in Theorem \ref{cota principal} are at least cubic in $\deg(V)$, but we do not have evidence that they are sharp. This leads us to pose the following question:

\begin{question} \label{conj: quadratic} Is there a universal polynomial $P(n, \deg(V))$, quadratic in $\deg(V)$, such that the inequality
\begin{equation*}
    \deg(TV) \leq P(n,\deg(V))
\end{equation*}
holds for every smooth irreducible algebraic variety $V\subseteq \A^n$?
\end{question}

\subsection{Varieties with tangent bundle of minimal degree}

Proposition \ref{Prop: Lowerboundtrivial} provides a lower bound for the geometric degree of the tangent bundle of a smooth irreducible algebraic variety $V\subseteq \A^n$.
A question that arises is whether the inequality in that result is strict or not. This is easily answered by considering $V$ as a linear variety because, in this case, $TV$ is also linear and hence,  both $V$ and $TV$ have degree $1$. 

We will see that, in fact, the equality between $\deg(TV)$ and $\deg(V)$ is a property that characterizes linear varieties. For simplicity, we will split the proof of this result in two steps. 
The first one is to prove it for curves.
\begin{lemma}\label{Lemma: curvalineal}
Let $\mathcal{C} \subseteq \mathbb{A}^n$ be a smooth irreducible algebraic curve such that $\deg(T\mathcal{C}) = \deg(\mathcal{C})$. Then $\mathcal{C}$ is a line.
\end{lemma}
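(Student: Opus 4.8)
The plan is to apply Theorem \ref{thm:degTC} (Theorem A) directly. That formula gives
\[
\deg(T\mathcal{C}) = \deg(\mathcal{C}) + \omega(\mathcal{C})\deg(\operatorname{Tan}(\mathcal{C})),
\]
so the hypothesis $\deg(T\mathcal{C})=\deg(\mathcal{C})$ is equivalent to the vanishing of the correction term $\omega(\mathcal{C})\deg(\operatorname{Tan}(\mathcal{C}))$. The entire argument then reduces to showing that this term can vanish only when $\mathcal{C}$ is a line.

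First I would argue by contradiction: suppose $\mathcal{C}$ is \emph{not} a line. Then by Lemma \ref{lem:tanC} we have $\dim(\operatorname{Tan}(\mathcal{C}))=2$, so in particular $\operatorname{Tan}(\mathcal{C})$ is a positive-dimensional (hence nonempty, non-trivial) variety and its geometric degree satisfies $\deg(\operatorname{Tan}(\mathcal{C}))\ge 1$. Next, because $\mathcal{C}$ is not a line, the projection $\pi_2:T\mathcal{C}\to\operatorname{Tan}(\mathcal{C})$ is a dominant morphism between irreducible varieties of the same dimension $2$ (as noted in the discussion preceding the definition of $\omega(\mathcal{C})$), so by Corollary \ref{coro:fibra} the generic fiber is finite and nonempty; thus $\omega(\mathcal{C})=[K(\operatorname{Tan}(\mathcal{C})):K(T\mathcal{C})]\ge 1$. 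Consequently the product $\omega(\mathcal{C})\deg(\operatorname{Tan}(\mathcal{C}))$ is at least $1$, which forces $\deg(T\mathcal{C})\ge \deg(\mathcal{C})+1 > \deg(\mathcal{C})$, contradicting the hypothesis.

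Therefore the assumption that $\mathcal{C}$ is not a line is untenable, and $\mathcal{C}$ must be a line, which completes the proof.

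I do not anticipate a genuine obstacle here, since the heavy lifting is entirely contained in Theorem \ref{thm:degTC} and Lemma \ref{lem:tanC}. The only point requiring a little care is confirming that both factors $\omega(\mathcal{C})$ and $\deg(\operatorname{Tan}(\mathcal{C}))$ are strictly positive when $\mathcal{C}$ is not a line; this is precisely where the non-line hypothesis enters, via Lemma \ref{lem:tanC} (ensuring $\dim(\operatorname{Tan}(\mathcal{C}))=2$ and hence a well-defined positive degree) and via the definition of $\omega(\mathcal{C})$ as the cardinality of a generic nonempty fiber of a dominant equidimensional map (hence $\ge 1$). With both factors positive, the correction term is a strictly positive integer, and the conclusion is immediate.
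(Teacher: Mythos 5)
Your proof is correct and follows essentially the same route as the paper: apply Theorem \ref{thm:degTC} and observe that the correction term $\omega(\mathcal{C})\deg(\operatorname{Tan}(\mathcal{C}))$ must vanish, which forces $\mathcal{C}$ to be a line. You are in fact slightly more careful than the paper's two-line proof, since you explicitly justify that both factors are strictly positive when $\mathcal{C}$ is not a line (via Lemma \ref{lem:tanC} and the definition of $\omega(\mathcal{C})$ as a generic fiber cardinality), a point the paper leaves implicit.
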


\begin{proof}
By Theorem \ref{thm:degTC}, we know that $
    \deg(T\mathcal{C}) = \deg(\mathcal{C}) + \omega(\mathcal{C}) \deg(\operatorname{Tan}(\mathcal{C}))$. Cancelling out, we obtain $\omega(\mathcal{C}) = 0$ and then, we conclude that $\mathcal{C}$ is a line.
\end{proof}

\bigskip

Now, we are ready to state and prove the main theorem of this section.

\begin{theorem} \label{thm:linear}
    Let $V \subseteq \mathbb{A}^n$ be a smooth irreducible algebraic variety. The equality \linebreak
     $\deg(TV) = \deg(V)$ holds if and only if $V$ is linear.
\end{theorem}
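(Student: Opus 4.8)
The plan is to prove the theorem by reducing the general case to the case of curves, which is already settled in Lemma \ref{Lemma: curvalineal}. The ``if'' direction is immediate: if $V$ is linear, then $TV$ is also a linear variety (the tangent space is constant, so $TV = V \times S$ for a fixed linear space $S$), and hence $\deg(TV) = \deg(V) = 1$. The substance is the ``only if'' direction, and here I would exploit the generic-projection machinery already assembled in Lemma \ref{Noether_grado} and used in Proposition \ref{prop:tangential_lines}, together with the functorial behavior of the differential $Dg$ introduced after Proposition \ref{proposition hipersup}.

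First I would set up the contrapositive strategy: assuming $\deg(TV) = \deg(V)$, I want to conclude that any generic plane section of $V$ by a linear space cutting out a curve is a line, and that this forces $V$ itself to be linear. More concretely, suppose $V$ has dimension $d$ and $\deg(TV) = \deg(V)$. The idea is to intersect $V$ with a generic affine-linear subspace $L$ of codimension $d-1$, so that $\CC := V \cap L$ is a smooth irreducible curve with $\deg(\CC) = \deg(V)$ (by the genericity in the definition of degree and Bertini-type smoothness). The crucial geometric fact to establish is that for a generic such $L$, the tangent bundle $T\CC$ is cut out inside $TV$ by the linear section, and that the relation between $\deg(TV)$ and $\deg(T\CC)$ mirrors the relation between $\deg(V)$ and $\deg(\CC)$, so that $\deg(T\CC) = \deg(\CC)$ as well. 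Once this is in hand, Lemma \ref{Lemma: curvalineal} forces $\CC$ to be a line, so $V$ has constant tangent direction along a generic curve section; varying $L$, every generic tangent space $T_pV$ agrees, and Proposition \ref{prop:tangential_lines} then yields that $V$ is linear.

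The hard part will be controlling how $\deg(TV)$ relates to $\deg(T\CC)$ under the linear section, because the tangent bundle of a section is \emph{not} in general the section of the tangent bundle: $T(V\cap L)$ involves the tangent directions of $\CC$, which lie in $T_pV \cap L$, a proper subspace of $T_pV$. A cleaner route may therefore be to avoid the claim $\deg(T\CC)=\deg(\CC)$ altogether and instead argue directly with tangent directions. I would use the lower-bound mechanism of Proposition \ref{Prop: Lowerboundtrivial}: since $\pi_1: TV \to V$ is surjective with the fiber over each $p$ equal to the $d$-dimensional space $T_pV$, the equality $\deg(TV)=\deg(V)$ is extremely restrictive. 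Intersecting $TV$ with a generic linear space of dimension $d$ that meets each fiber $T_pV$ in a single point over a generic point of $V$, one sees that $\deg(TV)$ counts, up to the degree of $V$, the number of ways the fixed direction constraints are met; equality with $\deg(V)$ forces the tangent spaces $T_pV$ over a generic Zariski-open $\mathcal U \subseteq \operatorname{Reg}(V)$ to be mutually parallel (all equal to a fixed $d$-dimensional linear subspace). Making this counting rigorous — showing that any genuine variation in the tangent directions would strictly increase $\deg(TV)$ beyond $\deg(V)$ — is the main obstacle, and it is precisely the point where the curve reduction and Lemma \ref{Lemma: curvalineal} do the real work, since there $\deg(T\CC) - \deg(\CC) = \omega(\CC)\deg(\operatorname{Tan}(\CC))$ is visibly nonzero unless $\CC$ is a line.

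Once the constancy $T_pV = T_qV$ for all $p,q$ in a dense open $\mathcal U \subseteq \operatorname{Reg}(V)$ is secured, the proof finishes immediately by invoking Proposition \ref{prop:tangential_lines}, whose hypothesis is exactly this constancy, to conclude that $V$ is a linear variety. Thus the overall architecture is: (i) dispose of the trivial ``if'' direction; (ii) reduce to establishing constancy of the generic tangent space; (iii) obtain that constancy either via a generic curve section plus Lemma \ref{Lemma: curvalineal}, or via a direct fiber-counting argument on $\pi_1 : TV \to V$; and (iv) close with Proposition \ref{prop:tangential_lines}. I expect step (iii) to require the most care, specifically in justifying that the linear section can be chosen so that the induced curve $\CC$ inherits $\deg(\CC) = \deg(V)$ \emph{and} a controllable tangent bundle, and I would lean on the genericity statements in Lemma \ref{Noether_grado} and Corollary \ref{coro:fibra} to make the fiber counts precise.
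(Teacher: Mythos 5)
Your architecture is the same as the paper's (generic linear section down to a curve, then Lemma \ref{Lemma: curvalineal}), but the one substantive step is left unresolved, and in fact you talk yourself out of the route that works. You correctly observe that $T(V\cap L)$ is not $TV\cap(L\times\A^n)$, but you then treat the comparison of $\deg(T\CC)$ with $\deg(TV)$ as a serious obstacle and pivot to an ``alternative'' fiber-counting argument whose key step you yourself admit is the main difficulty and never carry out. The resolution is much simpler than you fear. Using Bertini in the form of Jouanolou, one chooses linear forms $\ell_1,\ldots,\ell_{d-1}$ so that $\CC:=V\cap V(\ell_1,\ldots,\ell_{d-1})$ is a smooth irreducible curve with $\deg(\CC)=\deg(V)$ \emph{and}, crucially, $I(\CC)=I(V)+(\ell_1,\ldots,\ell_{d-1})$. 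Feeding this generating set into Proposition \ref{prop:dimTV} yields
\[
T\CC \;=\; TV\cap V\bigl(\ell_1(\x),\ldots,\ell_{d-1}(\x),\,\nabla\ell_1\cdot\y,\ldots,\nabla\ell_{d-1}\cdot\y\bigr),
\]
i.e.\ $T\CC$ \emph{is} a linear section of $TV$ --- by $L\times L_0$ with $L_0$ the direction space of $L$, not by $L\times\A^n$ --- and B\'ezout immediately gives $\deg(T\CC)\le\deg(TV)$. Combined with $\deg(T\CC)\ge\deg(\CC)$ (Proposition \ref{Prop: Lowerboundtrivial}), the hypothesis $\deg(TV)=\deg(V)$ forces $\deg(T\CC)=\deg(\CC)$, so $\CC$ is a line by Lemma \ref{Lemma: curvalineal}, whence $\deg(V)=\deg(\CC)=1$ and $V$ is linear. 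This is exactly the paper's argument (phrased there as a contradiction), and no appeal to Proposition \ref{prop:tangential_lines} or to constancy of tangent spaces is needed.

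The gap, concretely: you need the ideal-theoretic statement $I(\CC)=I(V)+(\ell_1,\ldots,\ell_{d-1})$ (not just set-theoretic equality and smoothness of the section) in order to identify $T\CC$ as the displayed linear section of $TV$; you neither state it nor derive the inequality $\deg(T\CC)\le\deg(TV)$ from it. Your fallback --- arguing that any variation of $T_pV$ over a dense open set would strictly increase $\deg(TV)$, then invoking Proposition \ref{prop:tangential_lines} --- is a plausible alternative in principle, but as written it is only a statement of intent: the claim that equality $\deg(TV)=\deg(V)$ forces the generic tangent spaces to be mutually parallel is precisely what needs proof, and nothing in the proposal supplies it.
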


\begin{proof}
    We will just prove the ``only if'' part of the statement, as the converse follows straightforwardly.
    As we have already proven the result for curves in Lemma \ref{Lemma: curvalineal}, let $d = \dim(V) > 1$ and suppose that $V$ is non-linear. Since $V$ is irreducible and non-linear, then $\deg(V) > 1$. Consider, by Bertini's Theorem 
    \cite[Ch.~I, Th\'eor\`eme 6.6]{jouanolou1983}, linear polynomials $\ell_1,\ldots,\ell_{d-1}$ in $\mathbb{A}^n$ such that $\mathcal{C} := V \cap V(\ell_1,\ldots, \ell_{d-1})$  is a smooth irreducible curve with $\deg(\mathcal{C}) = \deg(V) > 1$ and $I(\mathcal{C}) = I(V) + (\ell_1,\ldots,\ell_{d-1})$.

    As $\mathcal{C}$ is not a line, it follows from Proposition \ref{Prop: Lowerboundtrivial} and Lemma \ref{Lemma: curvalineal} that $\deg(T\mathcal{C}) > \deg(\mathcal{C}) = \deg(V)$. On the other hand, from Proposition \ref{prop:dimTV}, it can be easily seen that $$T\mathcal{C} = TV \cap V( \ell_1(\textbf{x}),\ldots, \ell_{d-1}(\textbf{x}), \ell_1(\textbf{y}),\ldots, \ell_{d-1}(\textbf{y})).$$
    Now, by B\'ezout's inequality we obtain that $\deg(T\mathcal{C}) \leq \deg(TV)$, and we conclude that
           $\deg(V) = \deg(\mathcal{C}) < \deg(T\mathcal{C}) \leq \deg(TV),$ contradicting the assumption $\deg(TV) = \deg(V)$.
\end{proof}

\end{document}